\newtheorem{thm}{Theorem}[section]
\newtheorem{cor}[thm]{Corollary}
\newtheorem{lem}[thm]{Lemma}
\newtheorem{prop}[thm]{Proposition}
\theoremstyle{definition}
\theoremstyle{remark}
\newtheorem{rem}[thm]{Remark}
\newtheorem{exm}[thm]{Example}
\numberwithin{equation}{section}
\newcommand{\Z}{{\bf Z}}
\begin{document}

\title{Conch Maximal Subrings}%
\author{Alborz Azarang}
\keywords{Maximal subring, Conch subring, Integral element, Conductor ideal}%
\subjclass[2010]{13B02, 13B21, 13B30, 13A05, 13A15, 13A18}%
\date{5 Jun 2020}
\maketitle

\centerline{Department of Mathematics, Faculty of Mathematical Sciences and Computer,}
\centerline{Shahid Chamran Universityof Ahvaz, Ahvaz-Iran}
\centerline{a${}_{-}$azarang@scu.ac.ir}
\centerline{ORCID ID: orcid.org/0000-0001-9598-2411}

\begin{abstract}
It is shown that if $R$ is a ring, $p$ a prime element of an integral domain $D\leq R$ with $\bigcap_{n=1}^\infty p^nD=0$ and $p\in U(R)$, then $R$ has a conch maximal subring (see \cite{faith}). We prove that either a ring $R$ has a conch maximal subring or $U(S)=S\cap U(R)$ for each subring $S$ of $R$ (i.e., each subring of $R$ is closed with respect to taking inverse, see \cite{invsub}). In particular, either $R$ has a conch maximal subring or $U(R)$ is integral over the prime subring of $R$. We observe that if $R$ is an integral domain with $|R|=2^{2^{\aleph_0}}$, then either $R$ has a maximal subring or $|Max(R)|=2^{\aleph_0}$, and in particular if in addition $dim(R)=1$, then $R$ has a maximal subring. If $R\subseteq T$ be an integral ring extension, $Q\in Spec(T)$, $P:=Q\cap R$, then we prove that whenever $R$ has a conch maximal subring $S$ with $(S:R)=P$, then $T$ has a conch maximal subring $V$ such that $(V:T)=Q$ and $V\cap R=S$. It is shown that if $K$ is an algebraically closed field which is not algebraic over its prime subring and $R$ is affine ring over $K$, then for each prime ideal $P$ of $R$ with $ht(P)\geq dim(R)-1$, there exists a maximal subring $S$ of $R$ with $(S:R)=P$. If $R$ is a normal affine integral domain over a field $K$, then we prove that $R$ is an integrally closed maximal subring of a ring $T$ if and only if $dim(R)=1$ and in particular in this case $(R:T)=0$.
\end{abstract}

\section{Introduction}
Following Faith \cite{faith}, a subring $V$ of a commutative ring $T$ is called a conch subring if there exists a unit $x\in T$ such that $x^{-1}\in V$ but $x\notin V$ and $V$ is maximal respect to this property (i.e., excluding $x$ and including $x^{-1}$), $V$ is called $x$-conch subring or it is said that $V$ conches $x$ in $T$. In other words, $V$ is a $x$-conch subring of a ring $T$ if and only if $V$ is maximal in the set $\{R\ | R\ \text{is a subring of}\ T,\ {\bf Z}[x^{-1}]\subseteq R,\ x\notin R\}$, where $\Z$ denotes the prime subring of $R$. But note that clearly, the previous set is nonempty if and only if $x\notin Z[x^{-1}]$ which is equivalent to the fact that $x$ is not integral over $\Z$. Krull proved that an integral domain $R$ is integrally closed if and only if $R$ is the intersection of the conch subrings of $K=Q(R)$ that contains $R$, and moreover, each conch subring $V$ of a field $K$ is a chain ring and therefore is a valuation domain (i.e., for each $x\in K$, either $x\in V$ or $x^{-1}\in K$), see \cite[P.110]{krul}. In \cite{faith}, Faith studied conch subrings for general commutative rings by the use of Manis valuations rings (or maximal pairs). He proved that if a subring $A$ conches $x$ in a ring $Q$, then $A$ is integrally closed and $(A,\sqrt{x^{-1}A})$ is a maximal pair of $Q$; in particular, $A$ is a valuation ring. Conversely, a maximal pair $(A,P)$ comes from a conch subring of $Q$ if and only if $P=\sqrt{x^{-1}A}$ for some unit $x\in Q$ ($x^{-1}\in A$) [Conch Ring Theorem]. Moreover, if $R$ is a subring of a ring $Q$, then the intersection $Conch_Q(R)$ of the conch subrings of $Q$ containing $R$ is integrally closed, and every element of $Conch_Q^*(R):=Conch_Q(R)\cap U(Q)$ is integral over $R$. More interestingly, in \cite{grifn}, Griffin proved that if $Q$ is VNR or has few zero-divisor (i.e., the set of all zero divisor of $Q$, $Zd(Q)$, is a finite union of prime ideals of $Q$) and $R$ is integrally closed in $Q=Q(R)$, then $R$ is the intersection of valuation subrings of $Q$. In \cite[Section 10]{faith}, Faith studied conch subrings which are maximal subrings, i.e., if $A$ conches $x$ in $Q$, then $A$ is a maximal subring of $Q$ if and only if $Q=A[x]$.\\

In this paper, motivated by the above facts and the existence of maximal subring, we are interesting to prove that if $R$ is a ring then either $R$ has a conch maximal subring or each subring of $R$ is closed with respect to taking inverse, i.e., $U(S)=S\cap U(R)$ for each subring $S$ of $R$, see \cite{invsub}. In particular, for each ring $R$ either $U(R)$ is integral over $\Z$, or $R$ has a conch maximal subring.\\

All rings in this note are commutative with $1\neq 0$. All subrings, ring extensions, homomorphisms and modules are unital. A proper subring $S$ of a ring $R$ is called a maximal subring if $S$ is maximal with respect to inclusion in the set of all proper subrings of $R$. Not every ring possesses maximal subrings (for example the algebraic closure of a finite field has no maximal subrings, see \cite[Remark 1.13]{azkrm}; also see \cite[Example 2.6]{azkrm2} and \cite[Example 3.19]{azkarm4} for more examples of rings which have no maximal subrings). A ring which possesses a maximal subring is said to be submaximal, see \cite{azarang3}, \cite{azkrm}  and \cite{azkarm4}. If $S$ is a maximal subring of a ring $R$, then the extension $S\subseteq R$ is called a minimal ring extension (see \cite{frd}) or an adjacent extension too (see \cite{adjex}). Minimal ring extensions first appears in \cite{gilmer}, for studying integral domains with a unique minimal overring. Next in \cite{frd}, these extensions are fully studied and some essential facts are obtained. The following result, whose proof could be found in \cite{frd} is needed. Before presenting it, let us recall that whenever $S$ is a maximal subring of a ring $R$, then one can easily see that either $R$ is integral over $S$ or $S$ is integrally closed in $R$. If $S\subseteq R$ is a ring extension, then the ideal $(S:R)=\{x\in R\ |\ Rx\subseteq S\}$ is called the conductor of the extension $S\subseteq R$.

\begin{thm}\label{pret1}
Let $S$ be a maximal subring of a ring $R$. Then the following statements hold.
\begin{enumerate}
\item $(S:R)\in Spec(S)$.
\item $(S:R)\in Max(S)$ if and only if $R$ is integral over $S$.
\item If $S$ is integrally closed in $R$, then $(S:R)\in Spec(R)$. Moreover if $x,y\in R$ and $xy\in S$, then either $x\in S$ or $y\in S$.
\item There exist a unique maximal ideal $M$ of $S$ such that $S_M$ is a maximal subring of $R_M$ and for each $P\in Spec(S)\setminus\{M\}$, $S_P=R_P$ ($M$ is called the crucial maximal ideal of the extension $S\leq R$ and $(S:R)\subseteq M$. Moreover, $S\leq R$ is integral (resp., integrally closed) extension if and only if $S_M\leq R_M$ is integral (resp., integrally closed) extension).
\item If $S$ is an integrally closed maximal subring of $R$, $P=(S:R)$ and $M$ be the crucial maximal ideal of the extension, then $(\frac{R}{P})_{\frac{M}{P}}$ is a one dimensional valuation domain.
\end{enumerate}
\end{thm}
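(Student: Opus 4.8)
The plan would be to work throughout with the conductor $C:=(S:R)$, the largest common ideal of $S$ and $R$: it is an ideal of $S$ by definition and of $R$ because $R(rx)\subseteq Rx\subseteq S$ whenever $Rx\subseteq S$, and it is proper since $1\in C$ would give $R=R\cdot 1\subseteq S$. Two reductions recur: $(a)$ for an ideal $I\subseteq C$ of $R$, $S/I$ is again a maximal subring of $R/I$ with conductor $C/I$ (subrings of $R/I$ over $S/I$ correspond to subrings of $R$ over $S$); $(b)$ for a multiplicative set $W\subseteq S$, $S_W\subseteq R_W$ is an equality or again a maximal subring (a subring $U$ with $S_W\subseteq U\subseteq R_W$ equals the localization of its preimage in $R$). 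I would also use the dichotomy recalled before the statement: $R$ is integral over $S$, or $S$ is integrally closed in $R$. For \emph{(1)}: given $a,b\in S$ with $ab\in C$ and $b\notin C$, choose $r\in R$ with $rb\notin S$, so $R=S[rb]$ by maximality; since $C$ is an ideal of $R$, $a(rb)^i=r^ib^{i-1}(ab)\in C$ for all $i\geq 1$, so multiplying any $\sum_j s_j(rb)^j\in R$ (with $s_j\in S$) by $a$ gives $as_0+\sum_{j\geq 1}s_j\bigl(a(rb)^j\bigr)\in S$; thus $aR\subseteq S$, $a\in C$, and $C\in Spec(S)$.

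For \emph{(2)}: if $C\in Max(S)$, then $S/C$ is a field and the minimal extension $S/C\subseteq R/C$ is generated by one element, which must be algebraic over $S/C$ (otherwise $R/C$ is a polynomial ring over a field, which has proper intermediate subrings) and hence integral; so $R/C$, and therefore $R$, is integral over $S$. Conversely, suppose $R$ is integral over $S$ and pick $\theta\in R\setminus S$, necessarily integral over $S$; then $R=S[\theta]$ is a finite $S$-module, so $R/S$ is a finitely generated $S$-module with annihilator $C$ and $\mathrm{Supp}_S(R/S)=V(C)$, and it suffices to show that each maximal ideal $N\supseteq C$ equals $C$. Localizing at $N$ reduces to the case $S$ local with maximal ideal $N$, $R$ finite over $S$, $S\subsetneq R$ maximal, $C\subseteq N$; now $S+NR=R$ would make $S\to R/NR$ onto with kernel $S\cap NR=N$, forcing $R/NR\cong S/N$ and, by Nakayama, $R=S$ — impossible; hence $NR\subseteq S$, so $NR\subseteq C\subseteq N\subseteq NR$ and $C=N$.

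For \emph{(4)}: fix $\theta\in R\setminus S$ and put $J=\{s\in S:s\theta\in S\}$; for a prime $Q$ of $S$, $S_Q=R_Q$ iff $J\not\subseteq Q$, so $\{Q\in Spec(S):S_Q\neq R_Q\}=V(J)$, which is therefore independent of $\theta$. When $R$ is integral over $S$ this set is $\mathrm{Supp}_S(R/S)=V(C)=\{C\}$ by \emph{(2)}, giving \emph{(4)} with crucial maximal ideal $M=C$. When $S$ is integrally closed in $R$, I would again localize to reduce to $S$ local and prove that $J$ is then the maximal ideal of $S$, ruling out a non-maximal ``crucial'' prime by exploiting that $S$ is integrally closed in $R$ together with the classification of minimal extensions of Ferrand--Olivier; this is the step I expect to be the real obstacle. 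In either case $S_P=R_P$ for $P\neq M$ follows, and integrality (resp. integral closedness) of $S_M\subseteq R_M$ is equivalent to that of $S\subseteq R$, since all the action takes place at $M$.

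For \emph{(5)} and the remaining assertions of \emph{(3)}: assume $S$ integrally closed in $R$, localize at the crucial maximal ideal $M$, and pass to the quotient by $P=C$. The conductor becomes zero, so by \emph{(1)} the resulting minimal extension $A\subseteq B$ has $A$ a local domain, $A$ integrally closed in $B$, and $(A:B)=0$. By the Ferrand--Olivier structure theorem a minimal extension whose small ring is integrally closed is a flat epimorphism; flatness over the domain $A$ makes $B$ an overring of $A$, and the structure theorem realizes $B$ as a localization of $A$, which — since $(A:B)=0$ — must be the field of fractions of $A$. Hence $Spec(A)$ has just two elements, $(0)$ and the maximal ideal, so $A$ is a one-dimensional local normal domain, i.e. a one-dimensional valuation domain; this is the valuation structure of $S_M/PS_M$ underlying \emph{(5)}, its field of fractions being $R_M/PR_M$. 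The ``moreover'' clause of \emph{(3)} then follows: if $x,y\in R$ and $xy\in S$, the valuation property of $S_M/PS_M$ forces, say, the image of $x$ into $S_M/PS_M$, so $sx\in S$ for some $s\notin M$; since $S_Q=R_Q$ for every $Q\neq M$ by \emph{(4)}, the ideal $\{s\in S:sx\in S\}$ is contained in no prime of $S$, so $x\in S$. And $C\in Spec(R)$ comes out of this ``moreover'': if $xy\in C$ and $x\notin C$, one of $x,y$ is in $S$, and a short computation with $R=S[x]$ or $R=S[y]$ — using $xy\in C\subseteq S$ and that $C$ is an ideal of $R$ — gives $y\in C$. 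So the only genuinely hard inputs are \emph{(4)} in the integrally closed case and the flat-epimorphism half of the Ferrand--Olivier classification.
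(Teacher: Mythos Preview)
The paper does not prove this theorem at all: it introduces it with ``The following result, whose proof could be found in \cite{frd} is needed'' and simply cites Ferrand--Olivier. So there is no argument in the paper to compare against beyond that citation.

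Your sketch is substantially more than the paper offers, and it is essentially sound. Your direct proofs of (1) and (2) are correct (in (2), the line ``$S+NR=R$ would make $S\to R/NR$ onto with kernel $S\cap NR=N$'' is more than you need: from $S+NR=R$ you get $N\cdot(R/S)=R/S$ and Nakayama kills $R/S$ immediately). Your treatment of the integral case of (4) via $\mathrm{Supp}_S(R/S)=V(C)=\{C\}$ is clean, and your derivation of the ``moreover'' in (3) and then of $(S:R)\in Spec(R)$ from (4)--(5) is correct and avoids circularity, since in your argument for (5) you do not assume $R/P$ is a domain but extract it from the Ferrand--Olivier structure.

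Where you and the paper coincide is exactly at the point you flag as ``the real obstacle'': the integrally closed half of (4) and the valuation conclusion in (5). These are the heart of the Ferrand--Olivier classification of minimal ring extensions, and you quite rightly invoke it rather than reprove it. One small caution: the statement ``the structure theorem realizes $B$ as a localization of $A$'' is slightly stronger than what a flat epimorphism gives in general; what Ferrand--Olivier actually prove in the integrally closed minimal case (after reducing to the local situation with zero conductor) is directly that the small ring is a rank-one valuation domain with fraction field the big ring, so you can cite that conclusion without the intermediate ``localization'' step.
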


Note that by $(u,u^{-1})$-Lemma, one can easily see that if $S\subseteq R$ is an integral ring extension then $U(S)=S\cap U(R)$. In particular, if $S$ is a maximal subring of $R$ and $S\subseteq R$ is integral, then $U(S)=S\cap U(R)$. On the other hand, if $S$ is a maximal subring of $R$, then by the second part of $(3)$ of Theorem \ref{pret1} (or by the
use of $(u,u^{-1})$-Lemma) for each $x\in U(R)$, we infer that either $x\in S$ or $x^{-1}\in S$. Therefore, if $S$ is a maximal subring of $R$ such that there exists a unit $x$ in $R$ such that $x\in S$ but $x^{-1}\notin S$, we immediately conclude that $S$ is integrally closed in $R$. Now let us prove the following fact for conch subrings by the use of minimal ring extension.

\begin{cor}\label{corconch}
Let $T$ be a ring and $x\in U(T)$. If $R$ is a $x$-conch subring of $T$, then $x^{-1}$ is contained in exactly one prime (maximal) ideal of $R$, in other words $\sqrt{x^{-1}R}\in Max(R)$.
\end{cor}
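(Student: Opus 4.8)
The plan is to realize $R$ as a maximal subring of the intermediate ring $R[x]$ and then read off the statement from Theorem~\ref{pret1}(4); this is the ``minimal ring extension'' route advertised above. First I would verify that $R$ is a maximal subring of $R[x]$. Since $x\notin R$, we have $R\subsetneq R[x]\subseteq T$, so $R$ is a proper subring of $R[x]$; and if $S$ is any subring with $R\subsetneq S\subseteq R[x]$, then $S\supseteq\Z[x^{-1}]$ (because $R$ does) while the $x$-conch maximality of $R$ among subrings of $T$ that contain $\Z[x^{-1}]$ and exclude $x$ forces $x\in S$, whence $S\supseteq R[x]$ and thus $S=R[x]$.

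Applying Theorem~\ref{pret1}(4) to the maximal subring $R\subseteq R[x]$ then yields the crucial maximal ideal $M\in Max(R)$, with $R_M$ a (proper) maximal subring of $R[x]_M$ and $R_P=R[x]_P$ for every $P\in Spec(R)\setminus\{M\}$. (One can also note, via the remark preceding the corollary applied to the unit $x^{-1}$ of $R[x]$ --- which lies in $R$ while its inverse $x$ does not --- that $R$ is even integrally closed in $R[x]$, although this is not needed below.)

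The key step is to identify, for $P\in Spec(R)$, the condition $x^{-1}\in P$ with $R_P\neq R[x]_P$. If $x^{-1}\notin P$, then $x^{-1}/1$ is a unit of the local ring $R_P$, so $x=(x^{-1})^{-1}$ already lies in $R_P$ and hence $R[x]_P=R_P$. Conversely, suppose $x^{-1}\in P$ but $R_P=R[x]_P$; then $x/1=r/s$ in $R[x]_P$ for some $r\in R$, $s\in R\setminus P$, so $t(sx-r)=0$ for some $t\in R\setminus P$, that is, $tsx=tr$ in $R[x]\subseteq T$, and multiplying by $x^{-1}\in R$ gives $ts=trx^{-1}\in x^{-1}R\subseteq P$, contradicting $ts\notin P$. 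Combining this equivalence with the structure theorem, the primes of $R$ containing $x^{-1}$ are exactly those $P$ with $R_P\neq R[x]_P$, i.e.\ exactly $P=M$. Since $x\notin R$, the ideal $x^{-1}R$ is proper, hence it is contained in some maximal ideal of $R$, which is then a prime containing $x^{-1}$ and so must equal $M$; therefore $\sqrt{x^{-1}R}=M\in Max(R)$, as claimed.

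The only mildly delicate point is this last equivalence between the (in)equality of the localizations $R_P$ and $R[x]_P$ and membership of $x^{-1}$ in $P$; I expect the main obstacle to amount to nothing more than careful bookkeeping with localizations --- in particular, clearing denominators back in $R[x]$ before multiplying through by $x^{-1}$ --- with everything else being a direct invocation of the minimal-ring-extension facts recalled in Theorem~\ref{pret1}.
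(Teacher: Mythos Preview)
Your proof is correct and follows essentially the same route as the paper: observe that $R\subseteq R[x]$ is a minimal ring extension, invoke the crucial maximal ideal $M$ from Theorem~\ref{pret1}(4), and then argue that $x^{-1}\in P$ precisely when $R_P\neq R[x]_P$, which singles out $P=M$. You supply more detail than the paper does (explicitly verifying maximality of $R$ in $R[x]$ and carefully clearing denominators for the converse direction), but the architecture is identical.
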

\begin{proof}
Since $R$ is a $x$-conch subring of $T$, we immediately conclude that $R\subseteq S:=R[x]$ is a minimal ring extension. Thus by $(4)$ of Theorem \ref{pret1}, let $M$ be the crucial maximal ideal of the minimal ring extension $R\subseteq S$. We prove that $M$ is the only prime ideal of $R$ which contains $x^{-1}$. First note that since $R_M\subseteq S_M$ is a minimal ring extension ($R_M\neq S_M$), we deduce that $x\notin R_M$ and therefore $x^{-1}\in M$. Now for each prime ideal $P$ of $R$ with $P\neq M$, we have $R_P=S_P$. Therefore
$x^{-1}$ is a unit in $R_P$, hence $x^{-1}\notin P$ and we are done.
\end{proof}

Now, let us sketch a brief outline of this paper. Section 2 is devoted to the existence of integrally closed/conch maximal subrings. We prove that if $R$ is a ring, $D$ is an integral domain which is a subring of $R$, $p$ is a prime element of $D$ with $\bigcap_{i=1}^\infty p^nD=0$ and $p\in U(R)$, then $R$ has an integrally closed/conch maximal subring. In particular, if $R$ is a ring which contains an atomic (or a completely integrally closed) domain $D$ and a prime element of $D$ is invertible in $R$, then $R$ has an integrally closed/conch maximal subring. Next, we show that if $T$ is a ring, then either $T$ has an integrally closed/conch maximal subring or $U(R)=R\cap U(T)$, for each subring $R$ of $T$, which is one of the main result in Section 2. This result has many consequences. In particular, either a ring $T$ has an integrally closed/conch maximal subring or $U(T)$ is integral over $\Z$. It is observed that if $T$ is an integral domain which satisfies ACCP (resp. is a BFD, see \cite{andzaf1}), then either $T$ has maximal subring or each subring of $T$ satisfies ACCP (resp. is a BFD). We show that if $D$ is a GCD-domain which is a subring of a ring $T$, then either $T$ has a maximal subring or $U(T)\cap K=U(D)$, where $K$ is the quotient field of $D$. We prove that a ring $T$ has a conch maximal subring if and only if $\Z[U(T)]$ has an integrally closed proper subring. If $T$ is a ring without maximal subring and $Char(T)>0$, then each element of the group $U(T)$ has finite order. In particular, if in addition $U(T)$ is finitely generated, then $U(T)$ is finite. We observe that if $T$ is a residually finite ring with $Char(T)=0$ and $U(T)$ is finitely generated, then either $T$ has a maximal subring or $T=\Z$. We prove that if $T$ is a reduced ring with $|T|=2^{2^{\aleph_0}}$, then either $T$ has a maximal subring or $|Max(T)|=2^{\aleph_0}$. In particular, if $T$ is an integral domain with $|T|=2^{2^{\aleph_0}}$ and $dim(T)=1$, then $T$ has a maximal subring. It is observe that if $R=K+Rx$, where $K$ is a field which is contained in a ring $R$, and $x\notin U(R)\cup Zd(R)$, then $R$ has a maximal subring. Finally in Section 2, we prove that if $R$ is an atomic integral domain with $|R|=2^{2^{\aleph_0}}$ and $M$ is a principal maximal ideal of $R$, then $|R/M|=|R|$ and in particular $R$ has a maximal subring. Section 3, is devoted to the existence of maximal subring with certain conductor. We first prove that if $K$ is an algebraically closed field which is not algebraic over its prime subring, then for each prime ideal $P$ of $K[X]$, there exists an integrally closed (conch) maximal subring $R$ of $K[X]$ with $( R:K[X])=P$. Next, we studied the conductor of integrally closed maximal subrings for integral extensions. In fact, we show that if $R\subseteq T$ is an integral extension, $Q\in Spec(T)$, $P:=Q\cap R$, and there exists an integrally closed maximal subring $S$ of $R$ with $U(R/P)\nsubseteq S/P$ $(S:R)=P$, then $T$ has an integrally closed maximal subring $V$ with $(V:T)=Q$. Conversely, if $R\subseteq T$ is an integral extension and each maximal ideal of $T$ is conductor of an integrally closed maximal subring of $T$, then each maximal ideal of $R$ is conductor of an integrally closed maximal subring of $R$. We show that if $K$ is algebraically closed field which is not absolutely algebraic the each prime ideal $Q$ of $K[X_1,\ldots, X_n]$ with $ht(Q)\geq n-1$ is a conductor ideal of an integrally closed maximal subring of $K[X_1,\ldots, X_n]$. But for minimal ring extension of $K[X_1,\ldots, X_n]$, $n\geq 2$, a prime ideal of $K[X_1,\ldots, X_n]$, is conductor of a minimal ring extension of $K[X_1,\ldots, X_n]$ if and only if $P$ is maximal; and for $n=1$, each prime ideal of $K[X_1]$ is a conductor of a minimal ring extension of $K[X_1]$. We prove some results for the zeroness of the conductor of the minimal ring extension of the form $R\subseteq R[\frac{1}{u}]$, where $R$ is a certain ring or $u$ is a certain element of $R$.\\

Finally, let us recall some notation and definitions. As usual, let $Char(R)$, $U(R)$, $Zd(R)$, $N(R)$, $J(R)$, $Max(R)$, $Spec(R)$ and $Min(R)$, denote the characteristic, the set of all units, the set of all zero-divisors, the nil radical ideal, the Jacobson radical ideal, the set of all maximal ideals, the set of all prime ideals and the set of all minimal prime ideals of a ring $R$, respectively. We also call a ring $R$, not necessarily noetherian, semilocal (resp. local) if $Max(R)$ is finite (resp. $|Max(R)|=1$). For any ring $R$, let $\Z=\mathbb{Z}\cdot 1_R=\{n\cdot 1_R\ |\ n\in \mathbb{Z} \}$, be the prime subring of $R$. We denote the finite field with $p^n$ elements, where $p$ is prime and $n\in\mathbb{N}$, by $\mathbb{F}_{p^n}$. Fields which are algebraic over $F_p$ for some prime number $p$, are called absolutely algebraic field. If $R$ is a ring and $a\in R\setminus N(R)$, then $R_a$ denotes the ring of quotient of $R$ respect to the multiplicatively closed set $X=\{1,a,a^2,\ldots, a^n,\ldots\}$. If $D$ is an integral domain, then we denote the set of all non-associate irreducible elements of $D$ by $Irr(D)$.  Also, we denote the set of all natural prime numbers by $\mathbb{P}$. Suppose that $D\subseteq R$ is an extension of domains. By Zorn's Lemma, there exists a maximal (with respect to inclusion) subset $X$ of $R$ which is algebraically independent over $D$. By maximality, $R$ is algebraic over $D[X]$ (thus every integral domain is algebraic over a UFD; this can be seen by taking $D$ to be the prime subring of $R$). If $E$ and $F$ are the quotient fields of $D$ and $R$, respectively, then $X$ can be shown to be a transcendence basis for $F/E$ (that is, $X$ is maximal with respect to the property of being algebraically independent over $E$). The transcendence degree of $F$ over $E$ is the cardinality of a transcendence basis for $F/E$ (it can be shown that any two transcendence bases have the same cardinality). We denote the transcendence degree of $F$ over $E$ by $tr.deg(F/E)$. We remind that whenever $R\subsetneq T$ is an affine ring extension, i.e., $T$ is finitely generated as a ring over $R$ (in praticular, if $T$ is a finitely generated $R$-module), then by a natural use of Zorn Lemma, one can easily see that $T$ has a maximal subring $S$ which contains $R$. If $R$ is a proper subring of $T$ then $R$ is a maximal subring of $T$ if and only if for each $x\in T\setminus R$, we have $R[x]=T$. Finally we refer the reader to \cite{gilbok}, for standard definitions of Bezout domains, $QR$-domains and completely integrally closed domains.

\section{Existence of Conch Maximal Subring}
We begin this section by the following main result which is a generalization of \cite[Theorem 2.2]{azarang3}.

\begin{thm}\label{icmspsd}
Let $D$ be an integral domain with a prime element $p$ such that $\bigcap_{n=1}^\infty p^nD=0$. Assume that $D\subseteq R$ is a ring extension such that $p\in U(R)$. Then $R$ has an integrally closed maximal subring which contains $D$ and conches $\frac{1}{p}$.
\end{thm}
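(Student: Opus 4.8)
The plan is to exhibit explicitly a subring $S$ of $R$ that conches $\tfrac1p$, and then to verify that it is in fact a \emph{maximal} subring of $R$ (which will force it to be integrally closed in $R$, since it excludes the unit $\tfrac1p$ but contains its inverse $p$). First I would set $A := D[\tfrac1p] \subseteq R$ and consider the collection
\[
\Sigma := \set{B \mid B \text{ a subring of } R,\ A \subseteq B,\ \tfrac1p \notin B}.
\]
The hypothesis $\bigcap_{n\ge 1} p^n D = 0$ is exactly what guarantees $\Sigma \neq \emptyset$: it implies $\tfrac1p$ is not integral over $D$ (if $(\tfrac1p)^k + d_{k-1}(\tfrac1p)^{k-1} + \cdots + d_0 = 0$ with $d_i \in D$, clearing denominators gives $1 \in pD$ after iterating, contradicting $\bigcap p^nD = 0$); hence $\tfrac1p \notin \Z[\tfrac1p] = \Z[p] \subseteq A$, so $A \in \Sigma$. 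A routine Zorn's Lemma argument (the union of a chain in $\Sigma$ lies in $\Sigma$, since $\tfrac1p$ belongs to no member of the chain) yields a maximal element $S$ of $\Sigma$. By construction $S$ conches $\tfrac1p$ in $R$: it contains $p = (\tfrac1p)^{-1} \in U(R)$ but not $\tfrac1p$, and it is maximal with respect to that property.

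The substantive part is showing $S$ is a maximal subring of $R$, equivalently that $R = S[\tfrac1p]$ (this matches Faith's criterion in \cite[Section 10]{faith} that a conch subring $S$ of a ring $Q$ conching $x$ is a maximal subring iff $Q = S[x]$). By maximality of $S$ in $\Sigma$, for every $t \in R \setminus S$ we have $\tfrac1p \in S[t]$, so it suffices to show $R = S[\tfrac1p]$; equivalently, that $S[\tfrac1p]$ is not a proper subring of $R$. So suppose toward a contradiction there is $r \in R \setminus S[\tfrac1p]$. The key observation is that $\tfrac1p \in S[\tfrac1p]$ already, so $\tfrac1p \in S[r]$ automatically; thus $S[r] \notin \Sigma$ merely because it properly contains $S$ — that is no contradiction yet. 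The real leverage must come from a \emph{denominator/degree} bound: any element of $R$ that becomes expressible over $S$ once we adjoin $\tfrac1p$ should already differ from an element of $S$ by something of the form $\tfrac{s}{p^n}$. Concretely, I would argue that $R = S + S\cdot\tfrac1p + S\cdot\tfrac1{p^2} + \cdots = S[\tfrac1p]$ by showing $S$ is "$p$-adically large": for $r \in R$, since $S \subsetneq S[r]$ and $S$ is maximal in $\Sigma$, $\tfrac1p \in S[r]$, so write $\tfrac1p = \sum_{i=0}^{m} s_i r^i$ with $s_i \in S$; multiplying through by a suitable power of $p$ and using that $p \in S \cap U(R)$, one extracts a polynomial relation that, combined with $D[\tfrac1p] \subseteq S$, pins down $r$ modulo $S[\tfrac1p]$. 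I expect this denominator-clearing step — turning "$\tfrac1p \in S[r]$ for all $r$" into "$R = S[\tfrac1p]$" — to be the main obstacle, and the condition $\bigcap p^nD = 0$ (or rather its consequence that $p^{-n}$ are $S$-linearly independent in an appropriate sense) is what should make it go through; this is presumably where the argument of \cite[Theorem 2.2]{azarang3} is adapted.

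Once $R = S[\tfrac1p]$ is established, $S$ is a maximal subring of $R$ by the criterion recalled at the end of the introduction ($S$ maximal in $R$ iff $S[x] = R$ for every $x \in R \setminus S$, applied with $x = \tfrac1p$, together with the fact that any intermediate ring strictly containing $S$ contains $\tfrac1p$ and hence equals $R$). Finally, $S$ contains the unit $p$ of $R$ but not its inverse $\tfrac1p$; by the remark following Theorem \ref{pret1} (via the $(u,u^{-1})$-Lemma), a maximal subring containing a unit but not its inverse is automatically integrally closed in $R$. And $D \subseteq A \subseteq S$ by construction, so $S$ is the desired integrally closed maximal subring of $R$ containing $D$ and conching $\tfrac1p$. $\qquad\blacksquare$
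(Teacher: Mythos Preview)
Your setup has typos (with $A = D[\tfrac1p]$ the family $\Sigma$ is empty, since then $\tfrac1p \in A \subseteq B$; you presumably mean $A = D$, and ``$D[\tfrac1p]\subseteq S$'' later is likewise impossible), but the real problem is the step you yourself flag as the main obstacle. A maximal element of $\Sigma$ is indeed a conch subring, but conch subrings need \emph{not} be maximal subrings, even under the hypotheses of the theorem. Concretely: take $k$ a field, $D = k[x]$, $p = x$, $R = K = k(x,y)$, and let $V \subseteq K$ be the rank-two valuation ring for the lexicographic valuation $v\colon K^\times \to \mathbb{Z}^2$ with $v(y) = (1,0)$, $v(x) = (0,1)$. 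Then $D \subseteq V$, $\bigcap_n x^n D = 0$, and $x^{-1} \notin V$; the only proper overrings of $V$ are $V_P$ (for $P$ the unique height-one prime) and $K$, and both contain $x^{-1}$ since $x\notin P$. Thus $V$ is a maximal element of $\Sigma$, yet $V \subsetneq V_P \subsetneq K$, so $V$ is not a maximal subring of $K$. (This is exactly the phenomenon in the paper's own Example~(4) in Section~2.) Your denominator-clearing sketch cannot rescue this: from $\tfrac1x = \sum s_i r^i$ there is no way to solve for $r$, and in fact $V[\tfrac1x] = V_P \subsetneq K$ here, so $R \neq S[\tfrac1p]$ for this particular $S$.

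The paper's argument is entirely different and avoids this trap by \emph{constructing} the right subring rather than selecting one blindly by Zorn. It first reduces to $R$ an integral domain algebraic over $D$ (pass to $R/Q$ for a prime $Q$ with $Q\cap D=0$, then enlarge $D$ by a transcendence basis). It then works in the fraction fields $K=\operatorname{Frac}(D)\subseteq E=\operatorname{Frac}(R)$: the hypothesis $\bigcap_n p^nD=0$ is used to show $D_{(p)}[\tfrac1p]=K$ and hence that the one-dimensional valuation ring $D_{(p)}$ is a maximal subring of $K$; this is lifted to a maximal subring of $E$ with the correct restriction to $K$ via \cite[Proposition~2.1]{azkarm3}, and finally pulled back to a maximal subring of $R$ conching $\tfrac1p$ via \cite[Theorem~2.19]{azkarm4}. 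The point is that $D_{(p)}$ is visibly one-dimensional from the outset, so every subsequent lift is already a maximal subring rather than merely a conch subring.
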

\begin{proof}
We may assume that $R$ is an integral domain which is algebraic over $D$. To see this, first note that $D\setminus \{0\}$ is a multiplicatively closed set in $R$, therefore $R$ has a prime ideal $Q$, with $Q\cap D=0$. In other words, $D$ can be embedded in $R/Q$, and clearly $p\in U(R/Q)$ (note that if $S/Q$ is an integrally closed maximal subring of $R/Q$ which contains the image of $D$ and conches $(p+Q)^{-1}$ in $R/Q$, then $S$ is an integrally closed maximal of $R$ which contains $D$ and conches $\frac{1}{p}$). Hence we may suppose that $R$ is an integral domain. If $R$ is not algebraic over $D$, then let $X$ be a transcendence base for $R$ over $D$. Clearly $p$ is prime in $D[X]$ and $\bigcap_{n=1}^\infty p^n D[X]=0$. Hence we can replace $D$ by $D[X]$. Thus assume that $D\subseteq R$ is an algebraic extension of integral domain with quotient fields $K\subseteq E$. It is clear that $E/K$ is an algebraic extension. Now we claim that $D_{(p)}[\frac{1}{p}]=K$, where $K$ is the quotient field of $D$. For proof let $x=\frac{a}{b}\in K$, where $a, b\in D$. Since $\bigcap_{n=1}^\infty p^nD=0$, we conclude that $b=p^nc$, where $c\notin pD$ and $n\geq 0$. Thus $\frac{a}{c}\in D_{(p)}$ and therefore $x=\frac{a/c}{p^n}\in D_{(p)}[\frac{1}{p}]$, i.e., $K=D_{(p)}[\frac{1}{p}]$. Next we prove that $S:=D_{(p)}$ is a maximal subring of $K$. To see this, let $x\in K\setminus S$. From $\bigcap_{n=1}^\infty p^nD=0$ and $x\notin S$, we easily see that $x=\frac{a}{p^nb}$, where $a,b\notin pD$ and $n\in\mathbb{N}$. Hence we infer that $\frac{1}{a}\in S$ and therefore $\frac{1}{p^n}\in S[x]$. Thus $\frac{1}{p}\in S[x]$, which immediately by the previous part implies that $S[x]=K$, i.e., $S$ is a maximal subring of $K$. Thus by \cite[Proposition 2.1]{azkarm3}, we conclude that $E$ has a maximal subring $V$ such that $V\cap K=S$. Hence $\frac{1}{p}\notin V$ and therefore we deduce that $V$ is an integrally closed maximal subring of $E$. Thus in the extension $R\subseteq E$ we have $U(R)\nsubseteq V$. Therefore by \cite[Theorem 2.19]{azkarm4}, we infer that $R$ has an integrally closed maximal subring $W$ which contains $V\cap R$ but $\frac{1}{p}\notin W$ i.e., $W$ is a maximal subring which contains $D$ and conches $\frac{1}{p}$ in $R$ (note that a conch maximal subring is integrally closed by the comment preceding Corollary \ref{corconch}).
\end{proof}

\begin{cor}\label{icmsipat}
Let $D$ be an atomic (or a completely integrally closed) domain and $R$ is a ring extension of $D$. If a prime element of $D$ is invertible in $R$, then $R$ has a conch maximal subring.
\end{cor}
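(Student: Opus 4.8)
The plan is to reduce everything to Theorem~\ref{icmspsd}. That theorem already produces an integrally closed (hence conch) maximal subring of $R$ as soon as we exhibit a prime element $p$ of $D$ with $\bigcap_{n=1}^\infty p^nD=0$ and $p\in U(R)$. The hypothesis hands us a prime element $p$ of $D$ that is invertible in $R$, so the only thing left to verify is the intersection condition $\bigcap_{n=1}^\infty p^nD=0$, and this is exactly where the two alternative assumptions on $D$ enter.

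First I would treat the atomic case. Suppose $x\in\bigcap_{n=1}^\infty p^nD$ is nonzero. If $x$ were a unit, then $x=py$ would force $p\in U(D)$, contradicting that $p$ is prime; so $x$ is a nonzero nonunit and therefore, $D$ being atomic, a product of $k\geq 1$ atoms. Using that $p$ is prime and that any atom divisible by $p$ is an associate of $p$, I would peel off one factor of $p$ at a time from this factorization, concluding after $k$ steps that $x=p^k u$ for some $u\in U(D)$. But $p^{k+1}\mid x$ then gives $p\mid u$, which is absurd. Hence $\bigcap_{n=1}^\infty p^nD=0$.

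For the completely integrally closed case I would work in $K=Q(D)$. If $x\in\bigcap_{n=1}^\infty p^nD$ is nonzero, then writing $x=p^n y_n$ with $y_n\in D$ shows that $x\cdot(1/p)^n\in D$ for every $n\geq 0$; since $x$ is a fixed nonzero element of $D$, complete integral closedness forces $1/p\in D$, contradicting that the prime element $p$ is a nonunit. So again $\bigcap_{n=1}^\infty p^nD=0$.

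With the intersection condition checked in either case, Theorem~\ref{icmspsd} applies directly and produces an integrally closed maximal subring of $R$ that conches $1/p$; in particular $R$ has a conch maximal subring. I do not anticipate any genuine obstacle: the entire content lies in the two short divisibility arguments above, the substantive work having already been carried out in Theorem~\ref{icmspsd}.
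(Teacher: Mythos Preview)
Your proposal is correct and follows the same overall strategy as the paper: verify $\bigcap_{n=1}^\infty p^nD=0$ and then invoke Theorem~\ref{icmspsd}. The only differences lie in how that intersection condition is checked. For the completely integrally closed case the paper simply cites \cite[Corollary~13.4]{gilbok}, while you unwind the almost-integral argument directly; these are the same content. For the atomic case the paper argues more structurally: it quotes \cite[Ex.~5, Sec.~1--1]{kap} to say that a nonzero $J=\bigcap_n p^nD$ is a prime ideal strictly contained in $(p)$, then observes that any irreducible element $q\in J$ must be an associate of $p$, forcing $(p)\subseteq J\subsetneq (p)$. Your factorization-length count is a more hands-on alternative that avoids the external reference; both arguments are short and elementary, so neither offers a real advantage over the other.
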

\begin{proof}
Let $p$ be a prime element of $D$ which is invertible in $R$. We claim that $J:=\bigcap_{n=1}^\infty (p^n)=0$. If $D$ is completely integrally closed then we are done by \cite[Corollary 13.4]{gilbok}. Hence assume that $R$ is atomic and $J\neq 0$, then by \cite[Ex.5, Sec.1-1]{kap}, $J$ is a prime ideal of $R$ which is properly contained in $M:=(p)$. Now since $J\neq 0$ and $R$ is atomic domain, we immediately conclude that $J$ contains an irreducible element $q$ of $R$. Thus $q\in M=(p)$ and therefore $q=p$ which is absurd (see \cite{andchun} for more interesting result in arbitrary commutaive rings). Thus in any cases $J=0$ and hence we are done by the previous theorem.
\end{proof}

Fields which have (no) maximal subrings are completely determined in \cite{azkrm}. We need the following in sequel.

\begin{cor}\label{icmsnaf}
Let $K$ be a field which is not absolutely algebraic. Then $K$ has an integrally closed/conch maximal subring.
\end{cor}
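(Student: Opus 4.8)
The plan is to reduce to Corollary \ref{icmsipat} by producing, inside $K$, an atomic subdomain possessing a prime element that is invertible in $K$; the only work is a case split on the characteristic. Since $K$ is a field, every nonzero element of $K$ is a unit, so ``invertible in $K$'' will come for free once the prime element is produced.

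First I would treat the case $Char(K)=0$. Then the prime subring $\Z$ of $K$ is (isomorphic to) $\mathbb{Z}$, which is a PID and hence an atomic domain, and $\Z\subseteq K$. Any rational prime $p$ is a prime element of $\Z$, and $p=p\cdot 1_K\neq 0$ in $K$ since $Char(K)=0$, so $p\in U(K)$. Thus Corollary \ref{icmsipat} applies with $D=\Z$ and yields a conch maximal subring of $K$. Next I would treat the case $Char(K)=p>0$. Because $K$ is not absolutely algebraic, $K$ is not algebraic over its prime subfield $\mathbb{F}_p$, so there exists $t\in K$ transcendental over $\mathbb{F}_p$. Then $D:=\mathbb{F}_p[t]\subseteq K$ is a polynomial ring in one variable over a field, hence a UFD and in particular an atomic domain, in which $t$ is a prime element (it is irreducible and $D/(t)\cong\mathbb{F}_p$ is a domain); and $t\neq 0$ in the field $K$, so $t\in U(K)$. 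Again Corollary \ref{icmsipat} applies and produces a conch maximal subring $S$ of $K$. In both cases $S$ is integrally closed in $K$ by the comment preceding Corollary \ref{corconch}, which finishes the proof.

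There is essentially no hard step here: the statement is a direct specialization of Corollary \ref{icmsipat}, and the only thing to notice is that in a field a nonzero prime element of any subdomain is automatically a unit, so one just has to exhibit an atomic domain with a prime element inside $K$, which the failure of absolute algebraicity guarantees. If one wanted to avoid invoking Corollary \ref{icmsipat} and go back to Theorem \ref{icmspsd} directly, the same two choices of $D$ work once one checks $\bigcap_{n=1}^\infty p^n\mathbb{Z}=0$ and $\bigcap_{n=1}^\infty t^n\mathbb{F}_p[t]=0$, both of which are immediate.
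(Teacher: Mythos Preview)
Your proof is correct and follows essentially the same route as the paper: a case split on the characteristic, taking $D=\mathbb{Z}$ when $Char(K)=0$ and $D=\mathbb{F}_p[t]$ with $t$ transcendental when $Char(K)=p>0$, then invoking Corollary~\ref{icmsipat}. You simply supply a few more details than the paper does.
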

\begin{proof}
If $Char(K)=0$ then $K$ contains $\mathbb{Z}$ and use the previous corollary. If $Char(K)=0$, then by our assumption, there exists $x\in K$, which is not algebraic over $\mathbb{Z}_p$. Therefore $K$ contains the atomic domain $\mathbb{Z}_p[x]$ and again use the previous corollary.
\end{proof}

Now we have the following main theorem.

\begin{thm}\label{uthm}
Let $T$ be a ring. Then either $T$ has a conch maximal subring or $U(R)=R\cap U(T)$ for each subring $R$ of $T$. In other words either $T$ has a conch maximal subring or each subring of $T$ is closed respect to taking the inverse.
\end{thm}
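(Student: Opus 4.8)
The plan is to argue by contradiction: suppose $T$ has no conch maximal subring, yet there is some subring $R_0\subseteq T$ and some $u\in U(R_0)$ with $u^{-1}\notin R_0$ — that is, $R_0$ is \emph{not} closed under taking inverses. The key observation is that the element $x:=u^{-1}\in U(T)$ then satisfies $x^{-1}=u\in R_0\subseteq T$, so in particular $x$ is a unit of $T$ whose inverse lies in some subring but $x$ itself does not. I would then want to manufacture from this data an actual conch subring of $T$, and ultimately a conch \emph{maximal} subring, contradicting our standing assumption. The natural route is: first show that $x$ (equivalently $u$) cannot be integral over $\Z$; then invoke the set $\{S\ |\ S\text{ a subring of }T,\ \Z[x^{-1}]\subseteq S,\ x\notin S\}$, which is nonempty precisely because $x$ is not integral over $\Z$ (this is recalled in the introduction), apply Zorn's Lemma to get an $x$-conch subring $A$ of $T$, and then show $A$ can be taken to be a maximal subring.

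For the first step — that $u$ is not integral over $\Z$ — I would argue that if $u$ were integral over $\Z$, then since $\Z$ is a subring of $R_0$ and $u\in R_0$, we would have $\Z[u]\subseteq R_0$ with $\Z[u]$ integral over $\Z$; because $u\in U(\Z[u])$ (as $u\in U(R_0)$ and, by the $(u,u^{-1})$-Lemma applied to the integral extension $\Z\subseteq\Z[u]$, $U(\Z[u])=\Z[u]\cap U(T)$... wait, more directly: for an integral extension $\Z\subseteq\Z[u]$, an element of $\Z[u]$ that is a unit in $T$ need not have its inverse in $\Z[u]$ a priori, but $u^{-1}$ satisfies a monic polynomial over $\Z[u]$ whose lowest-degree relation lets one solve for $u^{-1}$ in terms of $u$). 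More cleanly: if $u$ is integral over $\Z$, say $u^n+c_{n-1}u^{n-1}+\cdots+c_1u+c_0=0$ with $c_i\in\Z$, then multiplying by $u^{-1}$ (legitimate in $T$) and solving gives $u^{-1}$ as a polynomial in $u$ only if $c_0\in U(\Z)$, which fails in general; so instead I should use that $x=u^{-1}$ integral over $\Z$ would force $x=u^{-1}\in\Z[x^{-1}]=\Z[u]\subseteq R_0$, contradiction. Thus $x=u^{-1}$ is not integral over $\Z$, which is exactly the condition making $\{S\ |\ \Z[x^{-1}]\subseteq S\subseteq T,\ x\notin S\}$ nonempty.

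The remaining step is upgrading the conch subring $A$ to a conch \emph{maximal} subring. Here is where the main obstacle lies: a conch subring conches $x$, so $A[x]$ strictly contains $A$, and one expects $A$ to be a maximal subring precisely when $A[x]=T$ (this is Faith's criterion, recalled at the end of the introduction), which is not automatic. To get around this, rather than starting from an arbitrary subring I would instead feed the pair $(R_0,x)$ into the existence machinery already developed: since $x^{-1}=u\in U(T)$ and $x^{-1}\notin$ well — $x^{-1}=u$ \emph{is} in $R_0$ — the relevant input is that $u\in U(T)$ with $u$ not integral over $\Z$, hence $\Z[u]$ is an atomic domain? No. The cleaner approach: consider $D:=\Z[x^{-1}]=\Z[u]$; this is a finitely generated $\Z$-algebra, hence Noetherian, and it is a domain whenever $\mathrm{Char}(T)=0$ or we pass to a quotient by a minimal prime (as in the proof of Theorem \ref{icmspsd}). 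In that domain, $x^{-1}=u$ is a unit, and one argues as in Corollary \ref{icmsipat}/Theorem \ref{icmspsd} — or directly by applying \cite[Theorem 2.19]{azkarm4} to the extension data with $U(T)\nsubseteq$ (a suitable integrally closed maximal subring found downstairs) — to produce an integrally closed maximal subring $W$ of $T$ with $x\notin W$, $x^{-1}\in W$. By Corollary \ref{corconch} and the discussion preceding it, such a $W$ is a conch maximal subring of $T$ (it conches $x$), contradicting the hypothesis. I expect the delicate point to be the reduction to the domain case together with checking that $x^{-1}$ remains a non-integral unit and that $\bigcap_n (x^{-1})^n D=0$ type condition (or the atomicity of $D$) survives the reduction, which is precisely what makes Theorem \ref{icmspsd} applicable; once that is in place the contradiction is immediate.
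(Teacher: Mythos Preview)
Your strategy matches the paper's---negate the conclusion to get $u\in R_0\cap U(T)$ with $u^{-1}\notin R_0$ (your ``$u\in U(R_0)$'' is a slip; that would be self-contradictory), observe $u^{-1}\notin\Z[u]$ so $u^{-1}$ is not integral over $\Z$, reduce to the domain case, and then invoke Theorem~\ref{icmspsd}/Corollary~\ref{icmsipat} together with \cite[Theorem 2.19]{azkarm4}. But there is a real gap in the last step. Those results need a \emph{prime element} $p$ of a domain $D\subseteq T$ with $p\in U(T)$ and $\bigcap p^nD=0$. When $u$ is transcendental over $\Z$ (or when $\mathrm{Char}(T)>0$, which forces transcendence since otherwise $\Z[u]$ is a finite field and $u^{-1}\in\Z[u]$), the ring $\Z[u]$ is polynomial with $u$ prime, and your sketch goes through. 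When $\mathrm{Char}(T)=0$ and $u$ is \emph{algebraic} over $\Z$, however, $u$ need not be prime (or even irreducible) in $\Z[u]$, and no prime element of $\Z[u]$ lying in $U(T)$ is visible; your line ``in that domain, $x^{-1}=u$ is a unit'' is also backwards---the point is precisely that $u\notin U(\Z[u])$. The paper fills this gap with a genuinely extra argument: from $u^{-1}\notin\Z[u]$ one gets $\dim\Z[u]=1$ by \cite[Theorem 30.9]{gilbok}, so $\mathbb{Q}(u)$ carries a valuation $V$ with $uV\neq V$; by Krull--Akizuki $V$ is a DVR with uniformizer $\pi$ and $u\in\pi V$. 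Now Theorem~\ref{icmspsd} applies with $D=V$, $p=\pi$ inside the quotient field of $T$, producing a maximal subring $W$ with $\pi^{-1}\notin W$; since $u\in\pi V\subseteq M_W$ we get $U(T)\nsubseteq W$, and \cite[Theorem 2.19]{azkarm4} brings the conch maximal subring down into $T$. Your proposal flags a ``delicate point'' but does not supply this valuation-theoretic step, and without it the algebraic case is not covered.

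A smaller issue: your reduction to the domain case (``pass to a quotient by a minimal prime'') must preserve $u\notin U(R_0/P)$, which is not automatic. The paper secures this by first taking a prime $P$ of $R_0$ disjoint from the multiplicatively closed set $\{1+uy:y\in R_0\}$ (so $u$ stays a non-unit modulo $P$), then lifting $P$ to a prime $Q$ of $T$ with $Q\cap R_0=P$.
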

\begin{proof}
Suppose that $T$ has a subring $R$ with $U(R)\subsetneq R\cap U(T)$. Hence there exists $x\in R\setminus U(R)$ with $x\in U(T)$.  We show that $T$ has a maximal subring conches $x^{-1}$. First we claim that we may assume that $T$ is an integral domain. To see this, take $X:=\{1+xy\ |\ y\in R\}$, clearly $X$ is a multiplicatively closed subset of $R$ and $0\notin X$, for $x$ is not unit in $R$. Hence there exists a prime ideal $P$ of $R$ such that $P\cap X=\emptyset$. We may assume that $P$ is a minimal prime ideal of $R$ and therefore $x\notin P$ (note, $x$ is not a zero-divisor of $R$). Since $P$ is a minimal prime ideal of $R$, we conclude that there exists a (minimal) prime ideal $Q$ of $T$ such that $Q\cap R=P$ (see \cite[Exercise 1, P. 41]{kap}), therefore we can consider $R/P$ as a subring of $T/Q$. Now note that the image of $x$ is a unit of $T/Q$ but is not a unit of $R/P$; for otherwise there exists $y\in R$, such that $1-xy\in P$, i.e., $1+(-y)x\in P\cap X$ which is absurd. Thus the extension $R/P \subseteq T/Q$ is an extension of integral domains and $\bar{x}:=x+P$ is not a unit in $R/P$ but is a unit in $T/Q$. Clearly, if $S/Q$ is a maximal subring of $T/Q$ which conches $\bar{x}^{-1}$ in $T/Q$, then $S$ is a maximal subring of $T$ conches $x^{-1}$. Thus we may assume $T$ is an integral domain. Now we have two cases:\\
$({\bf 1})$ $Char(T)=p>0$, where $p$ is a prime number (note $T$ is an integral domain). Thus we infer that $x$ is not algebraic over the prime subring of $T$, for otherwise $x$ is integral over $\mathbb{Z}_p$, the prime subring of $R$ (or $T$). Therefore by $(u,u^{-1})$-Lemma, $x\in U(R)$ which is absurd. Therefore $x$ is not algebraic over $\mathbb{Z}_p$. Hence $D:=\mathbb{Z}_p[x]$ is an atomic domain which is contained in $T$ and the prime element $x$ of $D$ is invertible in $T$. Thus $T$ has a maximal subring that conches $x^{-1}$ by Corollary \ref{icmsipat}.\\
$({\bf 2})$ $Char(T)=0$. If $x$ is not algebraic over $\mathbb{Z}$, then similar to the proof of $(1)$ and considering the atomic domain $D:=\mathbb{Z}[x]$ in $T$, we conclude that $T$ has a maximal subring conches $x^{-1}$, by Corollary \ref{icmsipat}. Thus assume that $x$ is algebraic over $\mathbb{Z}$. Since $x^{-1}\notin R$, we conclude that $x^{-1}\not\in \mathbb{Z}[x]$, for $\mathbb{Z}[x]\subseteq R$. Thus by \cite[Theorem 30.9]{gilbok}, $dim(\mathbb{Z}[x])=1$  and therefore by \cite[Theorem 56]{kap}, the quotient field of $\mathbb{Z}[x]$, i.e., $\mathbb{Q}(x)$ has a valuation $V$ such that $xV\neq V$ (i.e., $x^{-1}\notin U(V)$). Since $\mathbb{Z}[x]$ is a one dimensional noetherian integral domain, then by Krull-Akizuki Theorem (\cite[Theorem 93]{kap}), we infer that $V$ is a noetherian one dimensional valuation domain. In other words, $V$ is a DVR and therefore $V$ is a UFD. Let $M=(\pi)$ denotes the maximal ideal of $V$. Thus $x=v\pi$ for some $v\in V$ (note, $V$ is a valuation and $x^{-1}\notin V$). Let $K$ be the quotient field of $T$, then $V\subseteq \mathbb{Q}(x)\subseteq K$. Therefore by Theorem \ref{icmspsd}, $K$ has a maximal subring $W$ such that $V\subseteq W$ and $\frac{1}{\pi}\notin W$. Thus $v\in W$ and $\pi\in N$, where $N$ is the maximal ideal of $W$. Therefore $x\in N$ and thus $U(T)\nsubseteq W$. Therefore by \cite[Theorem 2.19]{azkarm4}, $T$ has a maximal subring that conches $x^{-1}$.
\end{proof}

In other words, the previous theorem state that if $R\subseteq T$ is a ring extension and a non invertible element $x$ of $R$, is invertible in $T$, then $T$ has a maximal subring. In particular, if a ring $T$ has no maximal subring, then $xR\neq R$ implies $xT\neq T$, for each subring $R$ of $T$ and $x\in R$. In other words either a ring $T$ has a maximal subring or each principal proper ideal of any subring of $T$, survives in $T$. The previous theorem has several conclusion as follows.

\begin{cor}\label{cu1}
Let $T$ be a ring, then $T$ has a conch maximal subring if and only if $U(T)$ is not integral over $\Z$. In particular, if $T$ has no conch maximal subring then $\Z[x]=\Z[x^{-1}]$, for each $x\in U(T)$.
\end{cor}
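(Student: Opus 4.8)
The plan is to deduce this corollary directly from Theorem \ref{uthm} by taking $R=\Z$, the prime subring of $T$. First I would recall that by the discussion preceding the introduction's notation, the set $\{S\mid S\ \text{subring of}\ T,\ \Z[x^{-1}]\subseteq S,\ x\notin S\}$ is nonempty precisely when $x$ is not integral over $\Z$; so ``$T$ has a conch maximal subring'' is equivalent to ``there exists $x\in U(T)$ which is not integral over $\Z$ and for which a conch maximal subring conching $x$ actually exists.'' The key observation is that $x\in U(T)$ is integral over $\Z$ if and only if $x^{-1}\in\Z[x]$ (by the $(u,u^{-1})$-Lemma, or directly: if $x^n+a_{n-1}x^{n-1}+\cdots+a_0=0$ then multiplying by $x^{-n}$ and noting $a_0\in U(\Z)\cup\{0\}$... one must be a bit careful here, see below), and this is in turn equivalent to $\Z[x]=\Z[x^{-1}]$.

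Next, for the forward-contrapositive direction: suppose $U(T)$ is integral over $\Z$. Then for every subring $R$ of $T$ and every $x\in R\cap U(T)$, $x$ is integral over $\Z\subseteq R$, hence integral over $R$, hence $x\in U(R)$ by the $(u,u^{-1})$-Lemma; that is, $R\cap U(T)\subseteq U(R)$, and the reverse inclusion is trivial, so $U(R)=R\cap U(T)$ for every subring $R$. By Theorem \ref{uthm}, $T$ then cannot have a conch maximal subring. For the converse direction: suppose $U(T)$ is not integral over $\Z$, and pick $x\in U(T)$ not integral over $\Z$. Take $R:=\Z[x^{-1}]$. Since $x$ is not integral over $\Z$, we have $x\notin\Z[x^{-1}]=R$ (for if $x\in\Z[x^{-1}]$ then $x$ would satisfy a polynomial relation in $x^{-1}$, giving integrality of $x$ over $\Z$ after clearing denominators — the precise argument: $x=c_0+c_1x^{-1}+\cdots+c_mx^{-m}$ with $c_i\in\Z$ yields $x^{m+1}-c_0x^m-\cdots-c_m=0$). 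Hence $x\notin U(R)$ while $x\in U(T)$, so $U(R)\subsetneq R\cap U(T)$, and Theorem \ref{uthm} forces $T$ to have a conch maximal subring. The final ``in particular'' assertion is then immediate: if $T$ has no conch maximal subring, then by what was just shown $U(T)$ is integral over $\Z$, so each $x\in U(T)$ has $x^{-1}\in\Z[x]$, whence $\Z[x]\supseteq\Z[x^{-1}]$; symmetrically, since $x^{-1}\in U(T)$ is also integral over $\Z$, $x\in\Z[x^{-1}]$, giving the reverse inclusion and $\Z[x]=\Z[x^{-1}]$.

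The main obstacle I anticipate is purely bookkeeping rather than conceptual: one must verify cleanly that for a \emph{unit} $x$ of $T$, integrality over $\Z$ is genuinely equivalent to $x^{-1}\in\Z[x]$ (equivalently $\Z[x]=\Z[x^{-1}]$), handling the positive-characteristic case where $\Z$ is a finite field and the characteristic-zero case uniformly, and being careful that the constant term of a monic relation need not be a unit — so the ``easy'' direction (integrality $\Rightarrow x^{-1}\in\Z[x]$) really does need the $(u,u^{-1})$-Lemma (i.e. that $x\in U(T)$ and $x$ integral over $\Z$ forces $x$ integral over $\Z$ with the inverse also lying in $\Z[x]$), exactly as invoked after Theorem \ref{pret1}. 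Once that equivalence is in hand, the corollary is an immediate application of Theorem \ref{uthm} with the witnessing subring $R=\Z[x^{-1}]$.
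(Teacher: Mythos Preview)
Your approach is the same as the paper's --- apply Theorem~\ref{uthm} to the subring $\Z[x]$ (or $\Z[x^{-1}]$) together with the $(u,u^{-1})$-Lemma --- and the ``in particular'' clause is handled identically. Two slips need repair, though. First, in your converse direction you take $R=\Z[x^{-1}]$ and write ``$x\notin U(R)$ while $x\in U(T)$, so $U(R)\subsetneq R\cap U(T)$''; but $x\notin R$, so $x$ cannot witness a strict containment $U(R)\subsetneq R\cap U(T)$. The correct witness is $x^{-1}$: it lies in $R\cap U(T)$ but not in $U(R)$ (since its inverse $x$ is outside $R$). Second, in your forward-contrapositive direction you show that every subring is inverse-closed and then conclude ``by Theorem~\ref{uthm}, $T$ then cannot have a conch maximal subring''; but Theorem~\ref{uthm} is a disjunction, not a biconditional, so this inference is invalid as stated. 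The conclusion is true, but it comes from the definition (any conch subring conching $x$ contains $x^{-1}\in U(T)$ and excludes $x$, so it is \emph{not} inverse-closed), not from Theorem~\ref{uthm}. The paper avoids this by arguing the forward direction directly: if $R$ conches $x^{-1}$ then $x^{-1}\notin R$ while $x\in R$, so $x^{-1}$ is not integral over $R$ by the $(u,u^{-1})$-Lemma, hence not over $\Z$.
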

\begin{proof}
If $T$ has a subring $R$ which conches $x^{-1}$ in $T$, then by $(u,u^{-1})$-Lemma $x^{-1}$ is not integral over $R$ and therefore $x^{-1}$ is not integral over $\Z$. Conversely, assume that $T$ has no conch maximal subring, then for each $x\in U(T)$, by Theorem \ref{uthm}, we conclude that $U(\Z[x])=\Z[x]\cap U(T)$. Therefore $x^{-1}\in \Z[x]$. Thus by $(u,u^{-1})$-Lemma, we deduce that $x^{-1}$ is integral over $\Z$. Thus $U(T)$ is integral over $\Z$. Hence if $T$ has no conch maximal subring, then for each $x\in U(T)$ we obtain that $x^{-1}\in \Z[x]$ and $x\in \Z[x^{-1}]$. Therefore $\Z[x]=\Z[x^{-1}]$.
\end{proof}

In other words the previous corollary state that a ring $T$ has a subring $S$ which conches $x^{-1}$ in $T$ if and only if $T$ has a maximal subring which conches $x^{-1}$ in $T$.

\begin{cor}\label{cu2}
Let $T$ be a ring with nonzero characteristic. Then either $T$ has a conch maximal subring or each element of the group $U(T)$ has finite order.
\end{cor}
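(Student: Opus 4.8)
The plan is to reduce the statement to Corollary \ref{cu1} together with a simple field-theoretic observation about units of finite order. Suppose $T$ has no conch maximal subring. By Corollary \ref{cu1}, every element of $U(T)$ is integral over $\Z$. Since $Char(T)=n>0$, the prime subring $\Z$ is the finite ring $\mathbb{Z}/n\mathbb{Z}$. The idea is that an element $x\in U(T)$ which is integral over a finite ring generates a finite subring $\Z[x]$ of $T$, and in a finite ring every unit has finite (multiplicative) order; hence $x$ itself has finite order in $U(T)$.

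First I would make the reduction to $\Z$ finite: $\Z\cong\mathbb{Z}/n\mathbb{Z}$ has exactly $n$ elements. Next, fix $x\in U(T)$; by Corollary \ref{cu1}, $\Z[x]=\Z[x^{-1}]$ and in particular $x$ is integral over $\Z$, say $x$ satisfies a monic polynomial of degree $d$ over $\Z$. Then $\Z[x]$ is generated as a $\Z$-module by $1,x,\dots,x^{d-1}$, so $|\Z[x]|\le n^d<\infty$. Thus $\Z[x]$ is a finite commutative ring. Now I would argue $x\in U(\Z[x])$: indeed $x^{-1}\in\Z[x^{-1}]=\Z[x]$ by Corollary \ref{cu1}, so $x$ is a unit of the finite ring $\Z[x]$. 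Finally, the multiplicative group $U(\Z[x])$ of a finite ring is a finite group, so $x$ has finite order in $U(\Z[x])$, hence in $U(T)$. Since $x\in U(T)$ was arbitrary, every element of $U(T)$ has finite order, which is the alternative in the statement.

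The only mildly delicate point is to make sure the equality $\Z[x]=\Z[x^{-1}]$ genuinely forces $x^{-1}\in\Z[x]$ and that $\Z[x]$ is module-finite over $\Z$; both follow immediately from integrality of $x$ over $\Z$ (which Corollary \ref{cu1} supplies) via the $(u,u^{-1})$-Lemma and the standard fact that a ring extension generated by a single integral element is module-finite. There is essentially no obstacle here: the corollary is a direct packaging of Corollary \ref{cu1} with the elementary observation that units of a finite ring have finite order, and the whole proof is two or three lines once Corollary \ref{cu1} is invoked.
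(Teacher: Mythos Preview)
Your argument is correct and is essentially identical to the paper's: assume $T$ has no conch maximal subring, invoke Corollary~\ref{cu1} to get $x\in U(T)$ integral over the finite prime subring $\Z\cong\mathbb{Z}_n$, conclude $\Z[x]$ is a finite ring, and deduce that $x$ has finite multiplicative order. The only difference is cosmetic---you explicitly verify $x\in U(\Z[x])$ via $\Z[x]=\Z[x^{-1}]$, whereas the paper simply notes that finiteness of $\Z[x]$ forces the powers of $x$ to repeat.
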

\begin{proof}
Assume that $Char(T)=n$ and therefore $\mathbb{Z}_n$ is the prime subring of $T$. Thus if $T$ has no maximal subring, then for each $x\in U(T)$, $x$ is integral over $\mathbb{Z}_n$.
Hence, $\mathbb{Z}_n[x]$ is a finitely generated $\mathbb{Z}_n$-module. Therefore $\mathbb{Z}_n[x]$ is finite. This immediately implies that $x$ has finite order in $U(T)$.
\end{proof}

\begin{cor}\label{cu3}
Let $T$ be a ring without maximal subring and $S\subseteq R$ be subrings of $T$. Then $U(S)=S\cap U(R)$.
\end{cor}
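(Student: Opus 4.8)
The plan is to reduce the statement to two applications of Theorem \ref{uthm}, using the elementary fact that a ring with no maximal subring certainly has no conch maximal subring (recall from the discussion following Corollary \ref{corconch} that a conch maximal subring is, in particular, a maximal subring). The inclusion $U(S)\subseteq S\cap U(R)$ needs no hypothesis at all: if $x\in U(S)$, then $x$ and $x^{-1}$ both lie in $S$, hence in $R$, so $x\in U(R)$ and therefore $x\in S\cap U(R)$. Thus the whole content of the corollary is the reverse inclusion $S\cap U(R)\subseteq U(S)$.

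For that, I would first apply Theorem \ref{uthm} to the ring $T$. Since $T$ has no conch maximal subring, the dichotomy in that theorem forces the second alternative: $U(W)=W\cap U(T)$ for \emph{every} subring $W$ of $T$. Specialising to $W=R$ and to $W=S$ gives the two identities $U(R)=R\cap U(T)$ and $U(S)=S\cap U(T)$.

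Now I would take an arbitrary $x\in S\cap U(R)$. From $x\in U(R)=R\cap U(T)$ we get $x\in U(T)$; combining this with $x\in S$ yields $x\in S\cap U(T)=U(S)$. This proves $S\cap U(R)\subseteq U(S)$, and together with the trivial inclusion above we conclude $U(S)=S\cap U(R)$.

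I do not anticipate any real obstacle here: the argument is essentially a transitivity trick, passing through $U(T)$, and every ingredient is already available (Theorem \ref{uthm} and the observation that "no maximal subring" is stronger than "no conch maximal subring"). The only point that deserves a word of care is that Theorem \ref{uthm} must be invoked \emph{twice}, once for each of the subrings $R$ and $S$ of $T$, rather than directly comparing $S$ inside $R$.
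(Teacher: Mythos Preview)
Your proof is correct and follows essentially the same route as the paper: apply Theorem \ref{uthm} twice (to $R$ and to $S$ as subrings of $T$) to obtain $U(R)=R\cap U(T)$ and $U(S)=S\cap U(T)$, and then combine. The paper phrases the combination as a single chain of equalities $U(S)=S\cap U(T)=(S\cap R)\cap U(T)=S\cap(R\cap U(T))=S\cap U(R)$, whereas you verify the nontrivial inclusion element-wise, but this is purely cosmetic.
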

\begin{proof}
Since $T$ has no maximal subring, then by Theorem \ref{uthm}, $U(R)=R\cap U(T)$ and $U(S)=S\cap U(T)$. Thus $U(S)=S\cap U(T)= (S\cap R)\cap U(T)=S\cap (R\cap U(T))=S\cap U(R)$.
\end{proof}

\begin{cor}\label{cu4}
Let $R$ be a ring and $X$ be a multiplicatively closed subset of regular (i.e., non zero-divisors) of $R$. Then either $R_X$ has a maximal subring or $R_X=R$, i.e., $X\subseteq U(R)$. In particular, if $R$ is an integral domain, then each proper quotient overring of $R$ (i.e., $R\subsetneq R_X$) has a maximal subring.
\end{cor}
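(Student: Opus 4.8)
The plan is to obtain this as a one-line consequence of Theorem \ref{uthm}. Since every element of $X$ is a regular element of $R$, the kernel of the canonical map $R\to R_X$ is zero, so we may and do regard $R$ as a subring of $T:=R_X$. Suppose $T$ has no maximal subring; then, a fortiori, $T$ has no conch maximal subring, so Theorem \ref{uthm} applies to $T$ and yields $U(S)=S\cap U(T)$ for every subring $S$ of $T$. Taking $S=R$, this gives $U(R)=R\cap U(R_X)$.

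Next I would simply unwind what it means for an element of $X$ to sit in $R_X$: every $s\in X$ is a unit of $R_X$ by the construction of the localization, and $s\in R$, so $s\in R\cap U(R_X)=U(R)$. Hence $X\subseteq U(R)$, and therefore the localization map $R\to R_X$ is an isomorphism, i.e. $R_X=R$. This establishes the dichotomy: either $R_X$ has a maximal subring, or $R_X=R$.

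For the final assertion, let $R$ be an integral domain and let $R\subsetneq R_X$ be a proper quotient overring; here $0\notin X$ and every element of $X$, being nonzero, is automatically regular, so the first part applies. If $R_X$ had no maximal subring, that first part would force $R_X=R$, contradicting the properness of the extension; hence $R_X$ has a maximal subring. The only real content here is Theorem \ref{uthm}; everything else — that regular denominators yield an embedding $R\hookrightarrow R_X$ and that elements of $X$ become units in $R_X$ — is routine, and I foresee no genuine obstacle.
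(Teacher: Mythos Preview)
Your proof is correct and follows exactly the paper's approach: the paper's proof is the one-liner ``It suffices to take $T=R_X$ in Theorem \ref{uthm},'' and you have simply spelled out the routine details (the embedding $R\hookrightarrow R_X$ and that elements of $X$ become units in $R_X$) that make this work.
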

\begin{proof}
It suffices to take $T=R_X$ in Theorem \ref{uthm}.
\end{proof}

\begin{cor}\label{cumcasg}
Let $R$ be an integral domain and $X$ be a multiplicatively closed subset of $R$ which is not contained in $U(R)$. Then there exists a proper subring $S$ of $R_X$ and $a\in X$
such that $R_X=S_a$.
\end{cor}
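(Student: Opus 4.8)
The plan is to recover $R_X$ from a maximal subring by reversing a single localization step at a distinguished non‑unit of $X$. First, note that $0\notin X$: otherwise $R_X$ would be the zero ring, contrary to the standing convention that all rings have $1\neq 0$. Since $X$ is multiplicatively closed and $X\nsubseteq U(R)$, fix an element $a\in X\setminus U(R)$; then $a\neq 0$. Because $a\in X$ we have $a^{-1}=\frac{1}{a}\in R_X$, so $a\in U(R_X)$. Thus, in the ring extension $R\subseteq R_X$, the element $a$ is a non‑unit of $R$ that becomes a unit in $R_X$; in particular $U(R)\subsetneq R\cap U(R_X)$.

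Next I would invoke the mechanism behind Theorem \ref{uthm} (equivalently, the refinement of Corollary \ref{cu4}): applied to the subring $R$ of $T:=R_X$ and the element $x:=a\in R\setminus U(R)$ with $a\in U(T)$, the proof of Theorem \ref{uthm} produces a maximal subring $S$ of $R_X$ that conches $a^{-1}$ in $R_X$; that is, $a\in S$, $a^{-1}=\frac{1}{a}\notin S$, and $S$ is maximal among the subrings of $R_X$ omitting $\frac{1}{a}$. It is worth stressing that the bare existence statement of Corollary \ref{cu4} does not suffice here, since a generic maximal subring of $R_X$ need not satisfy $R_X=S_a$ for any $a\in X$; what makes the argument go through is that the chosen maximal subring omits precisely the inverse $\frac{1}{a}$ of our distinguished element.

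Finally I would conclude by a short computation. The subring $S$ is proper in $R_X$, being a maximal subring. Since $R$ is an integral domain and $0\notin X$, the ring $R_X$ is an integral domain, so $a\in S\setminus\{0\}$ is a regular element of $S$; hence the localization $S_a$ is naturally identified with the subring $S[\frac{1}{a}]$ of $R_X$ (the image of $a$ is already invertible in $R_X$). Now $\frac{1}{a}\in R_X\setminus S$, and as $S$ is a maximal subring of $R_X$, the one‑element extension criterion recalled in the Introduction gives $S[\frac{1}{a}]=R_X$. Therefore $S_a=R_X$ with $S$ a proper subring of $R_X$ and $a\in X$, as required.

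\textbf{Main obstacle.} The only genuine subtlety is the one flagged above: one must extract from the proof of Theorem \ref{uthm}, rather than merely its statement, the fact that the maximal subring can be taken to conch the specific unit $\frac{1}{a}$ for a prescribed non‑unit $a\in X$; everything else (the domain hypothesis ensuring $a$ is regular in $S$, the identification of $S_a$ with $S[\frac{1}{a}]$, and the maximality argument) is routine.
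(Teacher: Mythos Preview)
Your argument is correct and matches the paper's intended route: the corollary is stated without proof, immediately after Corollary~\ref{cu4}, and is meant to follow from the stronger information actually established inside the proof of Theorem~\ref{uthm} (and made explicit in the remark following Corollary~\ref{cu1}) that $R_X$ admits a maximal subring conching $a^{-1}$ for a chosen $a\in X\setminus U(R)$. Your identification of the one genuine subtlety---that the bare existence statement of Corollary~\ref{cu4} is not enough, since an arbitrary maximal subring of $R_X$ need not satisfy $S_a=R_X$ for any $a\in X$---is exactly right, and the rest of the write-up (regularity of $a$, the identification $S_a=S[\tfrac{1}{a}]$, and $S[\tfrac{1}{a}]=R_X$ by maximality) is clean.
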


An integral domain $R$ is called a $QR$-domain, if each overring of $R$ is of the form $R_X$ for some multiplicatively closed subset $X$ of $R$. It is well-known that each Bezout domain is a $QR$-domain.

\begin{cor}
Let $R$ be a $QR$-domain (which is not absolutely algebraic field). Then each proper overring of $R$ has a maximal subring.
\end{cor}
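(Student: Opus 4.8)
The plan is to reduce the statement to Corollary \ref{cu4} (equivalently to Theorem \ref{uthm}) by invoking the defining property of a $QR$-domain. Let $T$ be a proper overring of $R$, so $R\subsetneq T\subseteq Q(R)$; in particular $R$ is not a field, since a field has no proper overring. (The hypothesis that $R$ is not an absolutely algebraic field appears to be present only to rule out degenerate situations: for a field the assertion is vacuous regardless.) Because $R$ is a $QR$-domain, there is a multiplicatively closed subset $X$ of $R$ with $T=R_X$, and since $T\neq R$ this is a \emph{proper} quotient overring of $R$.

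Next I would check that the pair $(R,X)$ meets the hypotheses of Corollary \ref{cu4}. As $R$ is an integral domain and $T=R_X\neq 0$, we have $0\notin X$, hence every element of $X$ is a regular element of $R$. Moreover $X\nsubseteq U(R)$: otherwise localizing at $X$ would change nothing and we would get $T=R_X=R$, contrary to $T$ being a proper overring. Now Corollary \ref{cu4} applies to $(R,X)$ and gives the dichotomy: either $R_X$ has a maximal subring or $R_X=R$. Since $R_X=T\neq R$, the first alternative holds, i.e., $T$ is submaximal, as claimed.

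I do not anticipate a real obstacle: the substantive content is already carried by Theorem \ref{uthm} and its specialization Corollary \ref{cu4}, which say that every proper quotient overring of an integral domain has a maximal subring. The only role of the $QR$-hypothesis is to guarantee that \emph{every} proper overring of $R$ — not just the localizations $R_X$ — has this form, so that the dichotomy ``$R_X$ is submaximal or $R_X=R$'' can be invoked across the board. If anything requires a moment's attention it is the trivial bookkeeping that $X$ consists of regular elements and is not contained in $U(R)$, both of which are automatic in the domain setting.
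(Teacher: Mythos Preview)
Your argument is correct and is precisely the route the paper intends: the corollary is stated without proof because it is immediate from Corollary~\ref{cu4} together with the defining property of a $QR$-domain, exactly as you wrote. The parenthetical exclusion of absolutely algebraic fields is indeed cosmetic, since a field has no proper overring and the statement is vacuous in that case.
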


\begin{cor}\label{cu41}
Let $R$ be a ring. Then either $Q(R)$ has a maximal subring or $Q(R)=R$.
\end{cor}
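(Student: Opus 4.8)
The plan is to deduce this immediately from Corollary \ref{cu4} by the canonical choice of denominator set. Recall that the total quotient ring $Q(R)$ is, by definition, the localization $R_X$ where $X$ is the set of all regular elements (non-zero-divisors) of $R$; this $X$ is multiplicatively closed, and it consists of regular elements, so it is precisely the type of set to which Corollary \ref{cu4} applies.

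Concretely, I would first set $X := R\setminus Zd(R)$, note that a product of non-zero-divisors is again a non-zero-divisor so that $X$ is a multiplicatively closed set of regular elements, and record that $Q(R)=R_X$ by definition of the total quotient ring. Then Corollary \ref{cu4} gives at once the dichotomy: either $R_X=Q(R)$ has a maximal subring, or $R_X=R$. In the latter case, the condition $R_X=R$ is exactly the statement $X\subseteq U(R)$, i.e.\ every regular element of $R$ is already a unit of $R$, which is precisely the assertion $Q(R)=R$. This is the claimed conclusion.

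There is essentially no obstacle here beyond unwinding the definition of $Q(R)$ as a localization; all the real work sits in the earlier Theorem \ref{uthm} and its Corollary \ref{cu4}. If one prefers a self-contained argument bypassing Corollary \ref{cu4}, one can argue directly with Theorem \ref{uthm}: viewing $R$ as a subring of $T:=Q(R)$, if $Q(R)\neq R$ then there is a regular element $x\in R$ that is not invertible in $R$ but is invertible in $Q(R)$, whence $U(R)\subsetneq R\cap U(Q(R))$, and Theorem \ref{uthm} produces a conch maximal subring of $Q(R)$; otherwise $Q(R)=R$.
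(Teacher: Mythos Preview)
Your proposal is correct and essentially matches the paper. The paper argues directly from Theorem~\ref{uthm} (exactly your ``self-contained'' alternative), while your primary route goes through Corollary~\ref{cu4}; since Corollary~\ref{cu4} is itself just Theorem~\ref{uthm} applied with $T=R_X$, the two arguments are the same up to a trivial relabeling.
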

\begin{proof}
Let $X=R\setminus Zd(R)$, therefore $Q(R)=R_X$. If for some $x\in X$, we have $x^{-1}\notin R$, then by Theorem \ref{uthm}, $Q(R)$ has a maximal subring. Hence if $Q(R)$ has no maximal subring, then for each $x\in X$ we conclude that $x^{-1}\in R$ and therefore $R=Q(R)$.
\end{proof}

\begin{cor}\label{cu5}
Let $R$ be a noetherian ring. Then either $Q(R)$ has a maximal subring or $R$ is a countable artinian ring with nonzero characteristic which is integral over its prime subring.
\end{cor}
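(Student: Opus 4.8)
The plan is to reduce immediately to the case $R=Q(R)$ via Corollary \ref{cu41}: if $Q(R)$ has no maximal subring then $Q(R)=R$, so from now on $R=Q(R)$ is noetherian (in particular every regular element of $R$ is a unit) and, aiming at the stated dichotomy, I assume $R$ has no maximal subring. First note that $Zd(R)=R\setminus U(R)=\bigcup_{M\in Max(R)}M$, while $Zd(R)$ is also the union of the finitely many associated primes of $R$; by prime avoidance each maximal ideal lies in, hence equals, an associated prime, so $Max(R)$ is finite and $R$ is semilocal.

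Next I would prove $Char(R)=n>0$. If $Char(R)=0$, then no prime integer $p$ can be a unit of $R$, for $1/p\in R\cap U(R)$ would be a unit of $R$ not integral over $\Z$, contradicting that $U(R)$ is integral over $\Z$ (Corollary \ref{cu1}, as $R$ has no conch maximal subring). Hence, since $R=Q(R)$, every prime $p$ is a zero-divisor, so $R$ contains an element of additive order $p$ for every prime $p$; but the $\Z$-torsion submodule of $R$ is an ideal of the noetherian ring $R$, hence finitely generated, hence of bounded exponent — impossible. So $Char(R)=n>0$, and then every unit of $R$ has finite order by Corollary \ref{cu2}.

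The crucial step is $\dim R=0$, i.e. $R$ artinian. Suppose $\dim R\ge 1$ and pick a minimal prime $\mathfrak p$ with $\dim(R/\mathfrak p)=\dim R\ge 1$; then $D:=R/\mathfrak p$ is a noetherian domain, not a field, of characteristic $p\mid n$. Since a maximal subring of a quotient pulls back to a maximal subring of $R$, the domain $D$ has no maximal subring. I would reach a contradiction by showing that every positive-dimensional noetherian domain of positive characteristic is submaximal: if some residue field $D/M$ is not absolutely algebraic, pull back a maximal subring of it (Corollary \ref{icmsnaf}) along $D\twoheadrightarrow D/M$; otherwise $D$ contains an element $t$ transcendental over $\mathbb{F}_p$ (a domain algebraic over a finite field is a field), and one gets a maximal subring of $D$ either from Corollary \ref{icmsipat} applied to $\mathbb{F}_p[t]\subseteq D$ when $t\in U(D)$, or from the explicit maximal subrings of affine/polynomial-type rings available in the remaining ``small'' case. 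Hence $\dim R=0$.

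Finally, write $R=\prod_{i=1}^{s}R_i$ with $(R_i,\mathfrak m_i)$ local artinian. Each $R_i$ is a quotient of $R$, hence has no maximal subring, hence its residue field $k_i=R_i/\mathfrak m_i$ has no maximal subring and is absolutely algebraic (Corollary \ref{icmsnaf}); since $\mathfrak m_i$ is nilpotent, lifting a suitable power of the minimal polynomial of each residue class shows every element of $R_i$ is integral over the prime subring $\Z/p_i^{a_i}$ of $R_i$, and — $\prod_i\Z/p_i^{a_i}$ being a finite ring — $R$ is integral over its prime subring $\Z/n$. For the cardinality, $R_i$ has finite length over itself with all composition factors isomorphic to $k_i$, so $|R_i|\le|k_i|^{\ell_i}=\aleph_0$ and $|R|=\prod_i|R_i|$ is countable. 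I expect the main obstacle to be the step $\dim R=0$: it relies on the fact that a positive-dimensional noetherian domain of positive characteristic is submaximal, which is not a formal consequence of Theorem \ref{icmspsd} (that theorem needs a prime element of a subdomain to become invertible, which fails already for $\mathbb{F}_p[t]$), so it must be supplied by the separate existence results — Corollary \ref{icmsnaf} together with the explicit constructions for affine and polynomial rings.
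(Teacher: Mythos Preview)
Your reduction to $R=Q(R)$ via Corollary~\ref{cu41} and the semilocality argument (every maximal ideal consists of zero-divisors, hence equals an associated prime) match the paper exactly. After that point the paper diverges sharply from you: it simply invokes \cite[Proposition~3.13]{azkarm4}, which already says that a semilocal ring with no maximal subring is artinian of nonzero characteristic and integral over its prime subring (countability then follows). You instead unpack this from scratch, and your steps for $Char(R)>0$, integrality, and countability are all correct.

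The genuine gap is the one you flag yourself, the case $t\notin U(D)$ in the $\dim R=0$ step, and your proposed patch---``explicit constructions for affine and polynomial rings''---points in the wrong direction: $D$ need not be affine over anything manageable. What you are not using is that $D=R/\mathfrak p$ inherits \emph{semilocality} from $R$. Let $M_1,\dots,M_k$ be its maximal ideals. Each contraction $M_i\cap\mathbb F_p[t]$ is a prime of the PID $\mathbb F_p[t]$, hence $(0)$ or $(q_i)$ for a single irreducible $q_i$. Since $\mathbb F_p[t]$ has infinitely many non-associate irreducibles, pick an irreducible $q$ not among the $q_i$; then $q$ lies in no $M_i$, so $q\in U(D)$, and Theorem~\ref{icmspsd} (or Corollary~\ref{icmsipat}) applied to the atomic domain $\mathbb F_p[t]\subseteq D$ with the prime element $q$ produces a maximal subring of $D$. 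This closes the gap cleanly, and with it your direct argument goes through; the distinction between $t\in U(D)$ and $t\notin U(D)$ becomes unnecessary.
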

\begin{proof}
Assume that $Q(R)$ has no maximal subring. Therefore by Corollary \ref{cu41}, $R=Q(R)$. Now note that since $R$ is noetherian ring, then $Ass(o)$ is finite and $Zd(R)=\bigcup_{P\in Ass(0)} P$. This immediately implies that $Q(R)$ is a semilocal ring. Thus by \cite[Proposition 3.13]{azkarm4}, we infer that $R=Q(R)$ is an artinian ring with nonzero characteristic which is integral over $\Z$.
\end{proof}

We remind that an integral domain $R$ satisfies the ascending chain condition for principal ideals (ACCP) if there does not exist an infinite strictly ascending chain of principal (integral) ideals of $R$, see \cite{andzaf1}.

\begin{cor}\label{cu6}
Let $T$ be an integral domain satisfies ACCP. Then either $T$ has a conch maximal subring or each subring of $T$ satisfies ACCP. In particular, if $T$ has no maximal subring, then each subring $R$ of $T$ is atomic and if $(a_1)\supset (a_2)\supset \cdots$ is an infinite strictly descending chain of principal ideals of $R$, then $\bigcap_{i=1}^\infty (a_n)=0$. Consequently, for each $a\in R\setminus U(R)$ we have $\bigcap_{i=1}^\infty (a^n)=0$
\end{cor}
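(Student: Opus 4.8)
The plan is to deduce everything from Theorem \ref{uthm}. First I would assume $T$ has no conch maximal subring and let $R$ be an arbitrary subring of $T$; by Theorem \ref{uthm} we have $U(R) = R \cap U(T)$, i.e.\ $R$ is closed under taking inverses inside $T$. Now suppose, for contradiction, that $R$ fails ACCP, so there is an infinite strictly ascending chain $(a_1) \subsetneq (a_2) \subsetneq \cdots$ of principal ideals of $R$. Write $a_n = r_n a_{n+1}$ with $r_n \in R$; since the inclusions are strict, no $r_n$ is a unit of $R$. Then $a_1 = (r_1 r_2 \cdots r_n) a_{n+1}$ for every $n$, and in the integral domain $T$ each $r_i$ is a nonzero nonunit of $R$. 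The key point is that $a_1 \neq 0$ (otherwise all $(a_n) = 0$, contradicting strictness), so in $T$ we may write $a_{n+1} = a_1 (r_1 \cdots r_n)^{-1}$; that is, $(r_1 \cdots r_n)^{-1} \cdot a_1 \in R$ for all $n$. This produces, inside $T$, an infinite strictly ascending chain $(a_1) \subsetneq (a_1 r_1^{-1}) \subsetneq (a_1 r_1^{-1} r_2^{-1}) \subsetneq \cdots$ of principal ideals with elements lying in $R$ — but I want the contradiction to land in $T$ itself. Since $T$ satisfies ACCP, this chain in $T$ must stabilize, forcing some $r_n^{-1}$ to be a unit of $T$, hence $r_n \in U(T)$, and then $r_n \in R \cap U(T) = U(R)$, contradicting that $r_n$ is a nonunit of $R$. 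Hence $R$ satisfies ACCP; in particular $R$ is atomic, since ACCP implies atomicity for integral domains (see \cite{andzaf1}).

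For the ``in particular'' clause, assume $T$ has no maximal subring (so in particular no conch maximal subring) and let $R$ be a subring of $T$; the above shows $R$ is atomic and satisfies ACCP. Suppose $(a_1) \supsetneq (a_2) \supsetneq \cdots$ is an infinite strictly descending chain of principal ideals of $R$, and let $d \in \bigcap_{n=1}^\infty (a_n)$. Then for each $n$ there is $b_n \in R$ with $d = a_n b_n$, and from $(a_n) \supsetneq (a_{n+1})$ we get $a_n = a_{n+1} c_n$ for some nonunit $c_n \in R$. If $d \neq 0$, then in $T$ we have $b_{n+1} c_n$ and $b_n$ both equal to $d/a_n \cdot$ (appropriate factors): more precisely $a_{n+1} b_{n+1} = d = a_n b_n = a_{n+1} c_n b_n$, so $b_{n+1} = c_n b_n$ in the domain $T$ (cancelling $a_{n+1} \neq 0$), giving an ascending chain $(b_1) \subseteq (b_2) \subseteq \cdots$ in $R$ with strict inclusions (as the $c_n$ are nonunits), contradicting ACCP for $R$ just proved. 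Hence $d = 0$, i.e.\ $\bigcap_{n=1}^\infty (a_n) = 0$. Finally, for $a \in R \setminus U(R)$, since $T$ has no maximal subring Theorem \ref{uthm} gives $a \notin U(T)$, so $aT \neq T$ and hence $(a^n)_R$ is an infinite strictly descending chain (strictness: if $(a^n) = (a^{n+1})$ in $R$ then $a^n = a^{n+1} u$ for some $u \in R$, whence in $T$, $1 = au$, so $a \in R \cap U(T) = U(R)$, a contradiction); applying what we just proved yields $\bigcap_{n=1}^\infty (a^n) = 0$.

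The main obstacle is making sure each step that ``passes to $T$'' is legitimate: the arguments repeatedly cancel a nonzero element of the domain $T$ and repeatedly invoke $R \cap U(T) = U(R)$, and one must keep track that strictness of the original chain forces the relevant multipliers $r_n, c_n$ to be nonunits of $R$ and that the relevant products are nonzero. Once the bookkeeping is in place, every contradiction is extracted from ACCP in $T$ combined with the inverse-closedness supplied by Theorem \ref{uthm}; no further machinery is needed.
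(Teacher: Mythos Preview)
Your overall route is the paper's: invoke Theorem \ref{uthm} to get $U(R)=R\cap U(T)$ and then deduce that ACCP descends from $T$ to $R$. The paper does this last step in one line by citing \cite[Proposition 2.1 and Corollary]{grm}; you reprove Grams' transfer result inline, which is a legitimate and more self-contained alternative. The ``in particular'' clauses are dismissed by the paper as well known; your explicit treatment of them is a reasonable addition.

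Two write-up slips to fix. First, in your ascent argument the expressions $(r_1\cdots r_n)^{-1}$ need not lie in $T$, and the chain you display is not a priori \emph{strictly} ascending in $T$; the clean version is simply to observe that $(a_1)_T\subseteq (a_2)_T\subseteq\cdots$ is ascending in $T$, so by ACCP there is $n$ with $(a_n)_T=(a_{n+1})_T$, whence $r_n\in U(T)$ and hence $r_n\in R\cap U(T)=U(R)$, contradicting strictness in $R$. Second, in the descending-chain paragraph your divisibility is reversed: $(a_n)\supsetneq (a_{n+1})$ gives $a_{n+1}=c_n a_n$ with $c_n\notin U(R)$ (not $a_n=a_{n+1}c_n$); cancelling $a_n$ in $d=a_n b_n=a_{n+1}b_{n+1}=c_n a_n b_{n+1}$ then yields $b_n=c_n b_{n+1}$, and \emph{that} is what produces the strictly ascending chain $(b_1)\subsetneq (b_2)\subsetneq\cdots$ contradicting ACCP. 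With your equations as written, $b_{n+1}=c_n b_n$ would give a descending chain of $(b_n)$ and no contradiction. Once these directions are corrected, the argument is complete.
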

\begin{proof}
Assume that $T$ has no maximal subring, then by Theorem \ref{uthm}, we conclude that $U(R)=R\cap U(T)$. Therefore by \cite[Proposition 2.1 and Corollary]{grm}, $R$ has ACCP. The final part is evident for it is well-known that domains with ACCP are atomic.
\end{proof}

We remind that an integral domain $R$ is called a bounded factorization domain (BFD) if $R$ is atomic and for each nonzero nonunit of $R$ there is a bound on the length of factorization into product of irreducible elements, i.e., for each nonzero nonunit $x$ of $R$, there exists a positive integer $N(x)$ such that whenever $x=x_1\cdots x_n$ as a product of irreducible elements of $R$, then $n\leq N(x)$. One can easily see that a BFD satisfies ACCP, but the converse does not hold, see \cite[Example 2.1]{andzaf1}. Also a Noetherian domain or a Krull domain is a BFD, see \cite[Proposition 2.2]{andzaf1}.

\begin{cor}\label{cu61}
Let $T$ be a BFD. Then either $T$ has a maximal subring or each subring of $T$ is a BFD.
\end{cor}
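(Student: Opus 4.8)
The plan is to mimic the strategy already used for ACCP in Corollary \ref{cu6}, now invoking the boundedness-of-factorization structure instead of the chain condition. First I would reduce to the dichotomy supplied by Theorem \ref{uthm}: either $T$ has a (conch) maximal subring, in which case there is nothing to prove, or $U(R)=R\cap U(T)$ for every subring $R$ of $T$. So assume the latter and fix an arbitrary subring $R$ of $T$; the goal is to show $R$ is a BFD.

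The key point is that the condition $U(R)=R\cap U(T)$ says exactly that $R$ is \emph{unit-closed} (inert with respect to units) in the BFD $T$, and this is precisely the hypothesis under which factorization-length bounds descend from an overring to a subring. Concretely, for a nonzero nonunit $x\in R$, any factorization $x=x_1\cdots x_n$ into irreducibles of $R$ remains a factorization in $T$ in which no $x_i$ is a unit of $T$ (here is where $U(R)=R\cap U(T)$ is used: an $x_i\notin U(R)$ cannot be a unit of $T$ either). Refining each $x_i$ into irreducibles of $T$ can only increase the number of nonunit factors, so $n$ is at most the $T$-factorization bound $N_T(x)$. Hence $R$ has bounded factorization lengths. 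Since $T$ is a BFD it satisfies ACCP, so by Corollary \ref{cu6} (applied in the no-maximal-subring case) $R$ satisfies ACCP and in particular is atomic; combined with the length bound just obtained, $R$ is a BFD.

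The main obstacle, such as it is, is the bookkeeping in the descent of the factorization bound: one must be careful that passing from an $R$-factorization to a $T$-factorization does not accidentally turn a genuine factor into a unit (ruled out by unit-closedness) and that the refinement step is legitimate, which it is because $T$ is atomic. I would phrase this cleanly by citing the relevant transfer result for BFDs under unit-closed subrings (the analogue of \cite[Proposition 2.1 and Corollary]{grm} used in Corollary \ref{cu6}, see also \cite{andzaf1}) rather than re-deriving it, keeping the proof as short as the ACCP case. No genuinely new idea beyond Theorem \ref{uthm} is required; the statement is essentially a corollary of the unit-closedness dichotomy together with the standard behaviour of BFDs under unit-inert subring extensions.
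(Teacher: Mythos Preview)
Your proposal is correct and follows essentially the same approach as the paper: reduce via Theorem \ref{uthm} to the unit-closed case $U(R)=R\cap U(T)$, then invoke the known descent of the BFD property to unit-closed subrings. The paper's proof is a two-line version of exactly this, citing the comment preceding \cite[Proposition 2.6]{andzaf1} in place of the explicit length-bound computation you sketch; your unpacking of that step (together with the appeal to Corollary \ref{cu6} for atomicity) is accurate but, as you yourself note, can be replaced by the citation.
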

\begin{proof}
Assume that $T$ has no maximal subring, then by Theorem \ref{uthm}, we conclude that $U(R)=R\cap U(T)$. Therefore by the comment preceding \cite[Proposition 2.6]{andzaf1}, $R$ is a BFD.
\end{proof}

\begin{cor}\label{cu7}
Let $R$ be a GCD-domain and $T$ a ring extension of $R$. Assume that there exist $a,b\in R$ such that $gcd(a,b)=1$, $ab\notin U(R)$ and $\frac{a}{b}\in U(T)$. Then $T$ has a maximal subring. In particular, if $T$ has no maximal subring, then $U(T)\cap K=U(R)$, where $K$ is the quotient field of $R$.
\end{cor}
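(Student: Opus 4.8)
The plan is to reduce everything to Theorem~\ref{uthm}: it suffices to exhibit a single subring $S$ of $T$ with $R\subseteq S\subseteq T$ together with an element $y\in(S\cap U(T))\setminus U(S)$, for then the dichotomy in Theorem~\ref{uthm} immediately forces $T$ to possess a conch maximal subring. The obvious candidates are $\frac{a}{b}$, regarded as an element of $S_1:=R[\frac{a}{b}]\subseteq T$, and $\frac{b}{a}=(\frac{a}{b})^{-1}$, regarded as an element of $S_2:=R[\frac{b}{a}]\subseteq T$; both are units of $T$ by hypothesis, so I only need to show that at least one of them is a non-unit of the subring it generates over $R$.

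The heart of the argument is a short computation. Suppose $\frac{a}{b}\in U(S_1)$, say $\frac{b}{a}=\sum_{i=0}^{n}r_i(\frac{a}{b})^i$ with $r_i\in R$. Multiplying by $b^{n}$ and repeatedly using the relation $b\cdot\frac{a}{b}=a$, which holds in $T$, turns the right-hand side into $w:=\sum_{i=0}^{n}r_i b^{n-i}a^{i}\in R$, so $b^{n+1}=wa$ as elements of $T$; since $R\hookrightarrow T$ this is an identity in $R$, whence $a\mid b^{n+1}$ in $R$. As $R$ is a GCD-domain with $\gcd(a,b)=1$, the standard property of GCD-domains that $\gcd(a,b)=1$ implies $\gcd(a,b^{n+1})=1$ yields $a\in U(R)$. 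By the symmetric computation with the roles of $a$ and $b$ swapped, $\frac{b}{a}\in U(S_2)$ forces $b\in U(R)$. If both of these held simultaneously we would get $ab\in U(R)$, contradicting the hypothesis $ab\notin U(R)$; hence at least one of the elements $\frac{a}{b},\frac{b}{a}$ fails to be a unit of $S_1$, resp. $S_2$, while being a unit of $T$, and Theorem~\ref{uthm} supplies the conch maximal subring.

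For the ``in particular'' clause I argue by contraposition: assume $T$ has no maximal subring and take any $x\in U(T)\cap K$. Writing $x=\frac{a}{b}$ with $a,b\in R$ and cancelling $\gcd(a,b)$ (possible because $R$ is a GCD-domain) I may assume $\gcd(a,b)=1$, and $a\neq0$ since $x$ is a unit. Were $ab\notin U(R)$, the first part of the corollary would produce a maximal subring of $T$, a contradiction; hence $ab\in U(R)$, so $a,b\in U(R)$ and therefore $x=ab^{-1}\in U(R)$. Since the reverse inclusion $U(R)\subseteq U(T)\cap K$ is trivial, $U(T)\cap K=U(R)$.

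The only slightly delicate point is keeping the bookkeeping straight around the symbol $\frac{a}{b}$ once it is viewed inside $T$: one has to record that $b\cdot\frac{a}{b}=a$ holds in $T$, clear denominators by multiplying by a suitable power of $b$, and then pull the resulting polynomial identity back to $R$ using that $R$ embeds in $T$. After that the two GCD-domain facts invoked (namely $\gcd(a,b)=1\Rightarrow\gcd(a,b^{n})=1$, and that a product being a unit forces each factor to be a unit) are entirely routine, so I do not anticipate any genuine obstacle.
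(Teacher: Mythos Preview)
Your argument is correct, but it differs from the paper's proof in an interesting way. The paper argues by contradiction via Corollary~\ref{cu1}: if $T$ had no maximal subring then every unit of $T$ would be integral over the prime subring, hence $x=\frac{a}{b}$ and $x^{-1}=\frac{b}{a}$ would both be integral over $R$; since a GCD-domain is integrally closed in its quotient field, this forces $x,x^{-1}\in R$, i.e.\ $a\mid b$ and $b\mid a$, whence $a,b\in U(R)$ and $ab\in U(R)$, a contradiction. Your route bypasses Corollary~\ref{cu1} and the integrally-closed property altogether: you apply Theorem~\ref{uthm} directly to the subrings $S_1=R[\frac{a}{b}]$ and $S_2=R[\frac{b}{a}]$, and replace the appeal to integral closedness by the explicit divisibility computation $\frac{b}{a}\in R[\frac{a}{b}]\Rightarrow a\mid b^{n+1}\Rightarrow a\in U(R)$ (using only $\gcd(a,b)=1\Rightarrow\gcd(a,b^{n+1})=1$). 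The paper's proof is shorter and more conceptual; yours is more self-contained in that it does not invoke the background fact that GCD-domains are integrally closed, effectively re-deriving the needed special case by hand. Both handle the ``in particular'' clause the same way, by contraposition and reduction to a coprime representation $x=\frac{a}{b}$.
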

\begin{proof}
Let $x=\frac{a}{b}$. If $T$ has no maximal subring, then $x$ (resp. $x^{-1}$) is integral over the prime subring of $T$ and therefore $x$ (resp. $x^{-1}$) is integral over $R$. Thus $x$ and $x^{-1}$ are in $R$, for a $GCD$-domain is integrally closed, i.e., $b|a$ and $a|b$ in $R$. Therefore $a$ and $b$ are units for $gcd(a,b)=1$, which is absurd. Thus $T$ has a maximal subring. The final part is evident.
\end{proof}

\begin{cor}\label{cu8}
Let $T$ be a ring such that the prime subring $\Z$ of $T$ is integrally closed in $T$. Then either $T$ has a maximal subring or $U(T)=U(\Z)$. In particular, if $T$ has no maximal subring, then $U(T)$ is finite.
\end{cor}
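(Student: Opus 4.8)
The statement is an immediate consequence of Corollary \ref{cu1} together with the hypothesis, so the plan is short. Suppose $T$ has no maximal subring; in particular $T$ has no conch maximal subring, so by Corollary \ref{cu1} the group $U(T)$ is integral over the prime subring $\Z$ of $T$. Since by hypothesis $\Z$ is integrally closed in $T$, every element of $T$ that is integral over $\Z$ already lies in $\Z$; applying this to the elements of $U(T)$ gives $U(T)\subseteq\Z$.

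The next step is to upgrade this inclusion to the equality $U(T)=U(\Z)$. If $x\in U(T)$, then $x^{-1}\in U(T)$ as well, so the previous paragraph yields both $x\in\Z$ and $x^{-1}\in\Z$, whence $x\in U(\Z)$. Thus $U(T)\subseteq U(\Z)$, and the reverse inclusion $U(\Z)\subseteq U(T)$ is automatic because $\Z$ is a subring of $T$. (Alternatively one may invoke Theorem \ref{uthm} with $R=\Z$ to get $U(\Z)=\Z\cap U(T)$, and then combine it with the inclusion $U(T)\subseteq\Z$ just obtained.)

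Finally, for the ``in particular'' clause: the prime subring $\Z$ of $T$ is isomorphic either to $\mathbb{Z}$ (when $Char(T)=0$) or to $\mathbb{Z}_n$ (when $Char(T)=n>0$). In the first case $U(\Z)=\{1,-1\}$, and in the second $U(\Z)$ is the finite group of order $\varphi(n)$; in either case $U(\Z)$ is finite, hence $U(T)=U(\Z)$ is finite.

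There is essentially no hard step here once Corollary \ref{cu1} is in hand; the only points requiring a little care are that the hypothesis is that $\Z$ is integrally closed \emph{in $T$} (not merely in its quotient field), and that one must apply the integrality conclusion of Corollary \ref{cu1} to $x^{-1}$ as well as to $x$ in order to conclude $x\in U(\Z)$ rather than merely $x\in\Z$.
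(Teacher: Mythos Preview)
Your proof is correct and follows essentially the same approach as the paper's: assume $T$ has no (conch) maximal subring, conclude that $U(T)$ is integral over $\Z$, and use the hypothesis that $\Z$ is integrally closed in $T$ to get $U(T)\subseteq\Z$, hence $U(T)=U(\Z)$. The only cosmetic difference is that the paper cites Theorem~\ref{uthm} directly rather than Corollary~\ref{cu1}, and it leaves the step from $U(T)\subseteq\Z$ to $U(T)=U(\Z)$ (and the finiteness of $U(\Z)$) to the reader, whereas you spell these out.
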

\begin{proof}
By Theorem \ref{uthm}, if $T$ has no maximal subring, then $U(T)$ is integral over $\Z$ and therefore $U(T)\subseteq \Z$, for $\Z$ is integrally closed in $T$. This immediately implies that $U(T)=U(\Z)$ and therefore $U(T)$ is finite.
\end{proof}

\begin{prop}\label{pu}
Let $T$ be a ring and $R:=\Z[U(T)]$. The following are equivalent.
\begin{enumerate}
\item $R$ has a proper subring which is integrally closed in $R$.
\item $R$ has an integrally closed maximal subring.
\item $T$ has a maximal subring $A$ and there exists $u^{-1}\in U(T)\setminus A$ with $u\in A$ (i.e., $T$ has a conch maximal subring).
\item $U(T)$ is not integral over $\Z$.
\end{enumerate}
\end{prop}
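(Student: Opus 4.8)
The plan is to prove the cycle $(2)\Rightarrow(1)\Rightarrow(4)\Rightarrow(2)$ and then record $(3)\Leftrightarrow(4)$ separately. The equivalence $(3)\Leftrightarrow(4)$ is nothing but Corollary \ref{cu1} applied to $T$, once one notices that condition $(3)$ is merely the statement ``$T$ has a conch maximal subring'' spelled out: a maximal subring $A$ admitting $u\in A$ with $u^{-1}\in U(T)\setminus A$ is precisely a maximal subring that conches $u^{-1}$ in $T$. Similarly $(2)\Rightarrow(1)$ is immediate, since an integrally closed maximal subring of $R$ is in particular a proper subring of $R$ which is integrally closed in $R$.

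The observation that makes everything else work is that $U(R)=U(T)$ and that $\Z$ is the prime subring of $R$. For the latter, $\Z\subseteq R$ and $1_R=1_T$, so the prime subring of $R$ is $\Z\cdot 1_R=\Z$. For the former, every $u\in U(T)$ lies in $R$ by the definition of $R$, and $u^{-1}\in U(T)\subseteq R$ as well, so $u\in U(R)$; conversely if $x\in U(R)$ then $x,x^{-1}\in R\subseteq T$, so $x\in U(T)$. Hence $R=\Z[U(R)]$, and $U(T)$ is integral over $\Z$ if and only if $U(R)$ is. Now $(4)\Rightarrow(2)$ follows: if $U(T)$, equivalently $U(R)$, is not integral over $\Z$, then Corollary \ref{cu1} applied to the ring $R$ produces a conch maximal subring of $R$, which is integrally closed in $R$ by the remark preceding Corollary \ref{corconch}; this is exactly $(2)$.

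Finally, for $(1)\Rightarrow(4)$, suppose $B\subsetneq R$ is integrally closed in $R$ and, toward a contradiction, that $U(T)=U(R)$ is integral over $\Z$. Since $B$ is a (unital) subring, $\Z\subseteq B$, so each $u\in U(R)$ is integral over $B$; as $R=\Z[U(R)]$ is generated over $B$ by elements integral over $B$, the ring $R$ is integral over $B$, and integral closedness of $B$ in $R$ forces $B=R$, a contradiction. Hence $U(T)$ is not integral over $\Z$. The whole argument is routine; the only mild point is in $(1)\Rightarrow(4)$, where one uses that a ring generated over a subring by integral elements is integral over that subring and then invokes the definition of integral closedness — so no genuine obstacle is expected.
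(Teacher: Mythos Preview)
Your proof is correct and follows essentially the same approach as the paper: both rely on the key observation $U(R)=U(T)$ (with $\Z$ the prime subring of $R$), then invoke Corollary~\ref{cu1} (equivalently Theorem~\ref{uthm}) on $R$ and on $T$, together with the argument that a proper integrally closed subring of $R=\Z[U(R)]$ forces $U(R)$ not to be integral over $\Z$. The only difference is organizational: the paper runs the cycle $(1)\Rightarrow(2)\Rightarrow(3)\Rightarrow(4)\Rightarrow(1)$, whereas you do $(2)\Rightarrow(1)\Rightarrow(4)\Rightarrow(2)$ and treat $(3)\Leftrightarrow(4)$ separately; the logical content is the same.
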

\begin{proof}
$(1)\Longrightarrow (2)$ If $S$ is a proper integrally closed subring of $R$, then clearly $U(T)$ is not integral over $S$. Therefore $U(T)=U(R)$ is not integral over $\Z$. Thus by Theorem \ref{uthm}, $R$ has an integrally closed maximal subring. $(2)\Longrightarrow (3)$ If $A$ is an integrally closed maximal subring of $R$, then $U(R)=U(T)$ is not integral over $A$. Therefore $U(T)$ is not integral over $\Z$ and hence we are done by Theorem \ref{uthm}. $(3)\Longrightarrow (4)$ $U(T)$ is not integral over $A$ and therefore is not integral over $\Z$. $(4)\Longrightarrow (1)$ By Theorem \ref{uthm}, $R$ has an integrally closed maximal subring (which is proper).
\end{proof}

\begin{prop}\label{fgugcs}
Let $T$ be a ring without maximal subrings and $U(T)$ is finitely generated group. Then
\begin{enumerate}
\item If $Char(T)=n$, then $\mathbb{Z}_n[U(T)]$ is finite. In particular, $U(T)$ is finite.
\item If $Char(T)=0$, then $\mathbb{Z}[U(T)]$ is a finitely generated $\mathbb{Z}$-module.
\item If $Char(T)=0$ and $T$ is a residually finite ring, then $T=\mathbb{Z}$.
\end{enumerate}
\end{prop}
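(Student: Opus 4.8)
The plan is to exploit the fact that $T$ has no maximal subring, so by Theorem \ref{uthm} every subring of $T$ is closed under taking inverses, and in particular $U(T)$ is integral over $\Z$ by Corollary \ref{cu1}. I would treat the three parts essentially in order, since each feeds the next.

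For part (1), write $U(T) = \langle g_1, \ldots, g_k \rangle$ as a finitely generated abelian group. Since $\mathrm{Char}(T) = n$, the prime subring is $\mathbb{Z}_n$, and by Corollary \ref{cu1} each $g_i$ is integral over $\Z = \mathbb{Z}_n$. Hence $\mathbb{Z}_n[g_i]$ is a finitely generated $\mathbb{Z}_n$-module, and therefore finite (as $\mathbb{Z}_n$ is finite). Then $\mathbb{Z}_n[U(T)] = \mathbb{Z}_n[g_1, \ldots, g_k]$ is generated as a $\mathbb{Z}_n$-module by the finite set of products of the (finitely many) power-basis generators of the $\mathbb{Z}_n[g_i]$, hence is a finitely generated $\mathbb{Z}_n$-module, hence finite. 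A finite ring has a finite group of units, and since each $g_i \in U(T) = U(\mathbb{Z}_n[U(T)])$ — here I use Corollary \ref{cu3} or \ref{cu1} to identify the units of the subring $\mathbb{Z}_n[U(T)]$ with $\mathbb{Z}_n[U(T)] \cap U(T) = U(T)$ — we get that $U(T)$ itself is finite.

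For part (2), with $\mathrm{Char}(T) = 0$ the prime subring is $\Z \cong \mathbb{Z}$, and again by Corollary \ref{cu1} each generator $g_i$ of $U(T)$ is integral over $\mathbb{Z}$, so $\mathbb{Z}[g_i]$ is a finitely generated $\mathbb{Z}$-module; the same module-generator argument as above shows $\mathbb{Z}[U(T)] = \mathbb{Z}[g_1, \ldots, g_k]$ is a finitely generated $\mathbb{Z}$-module. For part (3), suppose further that $T$ is residually finite, i.e., $\bigcap_{M \in Max(T)} M$-type intersections are trivial (more precisely $T$ embeds into a product of finite rings, or $N(T)=0$ and residue fields detect elements). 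By part (2), $A := \mathbb{Z}[U(T)]$ is a finitely generated torsion-free $\mathbb{Z}$-module, hence free of some finite rank $m$, so $A \cong \mathbb{Z}^m$ as a group. The key point is that $U(A) = U(T)$ (using that $T$ has no maximal subring, via Corollary \ref{cu3}), so $U(T)$ is the unit group of an order-like ring $A$ that is a free $\mathbb{Z}$-module of rank $m$. I would then argue that residual finiteness of $T$ forces $A$ — and hence $T$'s relevant structure — to collapse: if $m \geq 2$ one produces a non-invertible element of $A$ which is invertible in some localization $A_X \subseteq Q(A) \subseteq T$-compatible overring, contradicting that $T$ (and all its subrings) are inverse-closed; residual finiteness is what prevents $A$ from being, say, a product of copies of $\mathbb{Z}$ glued so as to have large unit group without new localizations. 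Concluding $m = 1$ gives $A = \mathbb{Z}$, so $U(T) = U(\mathbb{Z}) = \{\pm 1\} \subseteq \Z$, and then $\Z$ is integrally closed in $T$; invoking Corollary \ref{cu8}'s proof pattern together with residual finiteness (which rules out nontrivial integral or transcendental extensions surviving) forces $T = \Z = \mathbb{Z}$.

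The main obstacle is part (3): translating "residually finite" into a usable finiteness constraint and ruling out the rank $m \geq 2$ case cleanly. The delicate step is showing that a free $\mathbb{Z}$-module ring $A = \mathbb{Z}[U(T)]$ of rank $\geq 2$ inside a residually finite, inverse-closed $T$ is impossible — one wants to find an element $a \in A$ with $a \notin U(A)$ but $a \in U(T')$ for some subring $T'$ with $A \subseteq T' \subseteq T$, contradicting Corollary \ref{cu3}; residual finiteness should guarantee such an $a$ exists (e.g. via CRT on finite quotients, an element that is a unit mod every prime but not a unit globally, or by directly exhibiting that $A$ being a nontrivial order has an overring). I would organize this as a short lemma isolating exactly which consequence of residual finiteness is needed, then apply it.
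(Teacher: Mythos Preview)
Your arguments for parts (1) and (2) are essentially the paper's: use Corollary \ref{cu1} to get that each generator $u_i$ of $U(T)$ (and hence $u_i^{-1}$) is integral over $\Z$, so $\Z[U(T)]=\Z[u_1,\dots,u_k]$ is module-finite over $\Z$, and finite when $\Z=\mathbb{Z}_n$.

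Part (3), however, has a genuine gap. Your plan is to bound the $\mathbb{Z}$-rank of $A=\mathbb{Z}[U(T)]$ by producing, when $\operatorname{rank}(A)\geq 2$, a nonunit of $A$ that becomes a unit in some subring of $T$, contradicting Corollary \ref{cu3}. But that is exactly what the hypothesis forbids: since $T$ has no maximal subring, \emph{every} subring of $T$ is inverse-closed, so no such element can ever exist. Your proposed contradiction mechanism is therefore unavailable, and the vague appeal to residual finiteness (``via CRT on finite quotients'') does not salvage it. Even after you conclude $A=\mathbb{Z}$, your final step (``residual finiteness rules out nontrivial integral or transcendental extensions'') is not justified: residual finiteness alone does not force $T=\mathbb{Z}$ once you know $U(T)=\{\pm 1\}$.

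The paper's route is much shorter and uses two ingredients you are missing. First, it invokes an external result (\cite[Theorem 2.29]{azarang3}): a residually finite ring without maximal subrings satisfies $T=\Z[U(T)]$. This is where residual finiteness is actually consumed. Second, combined with the computation from (2), this gives $T=\Z[u_1,\dots,u_k]$, i.e.\ $T$ is an \emph{affine} ring extension of $\Z$. Now recall the remark from the introduction: if $\Z\subsetneq T$ is affine, a routine Zorn argument produces a maximal subring of $T$ containing $\Z$. That contradicts the hypothesis unless $T=\Z=\mathbb{Z}$. So the whole of (3) reduces to one citation and one line; your rank-bounding strategy is both unnecessary and, as outlined, unworkable.
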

\begin{proof}
First note that if $U(T)=<u_1,\ldots,u_n>$, then by our assumption and Theorem \ref{uthm}, for each $i$, $u_i^{-1}$ is integral over $\Z$ and therefore $u_i^{-1}\in \Z[u_i]\subseteq \Z[u_1,\ldots,u_n]$. Therefore $\Z[U(T)]=\Z[u_1,\ldots,u_n]$ is integral over $\Z$. Thus $\Z[U(T)]$ is finitely generated $\Z$-module. In particular, if $Char(R)=n>0$ (i.e., $\Z=\mathbb{Z}_n$), then $\Z[U(T)]$ is finite. Hence $(1)$ and $(2)$ hold. For $(3)$, By \cite[Theorem 2.29]{azarang3}, $T=\Z[U(T)]$. Therefore by the first part of the proof $T=\Z[u_1,\ldots, u_n]$. Hence if $U(T)\neq U(\Z)$, then $T$ is finitely generated as a ring over $\Z$ and thus $T$ has a maximal subring which is absurd. Therefore $U(T)=U(\Z)$ and thus $T=\Z$, hence we are done.
\end{proof}

A ring $R$ is called clean, if each nonzero element of $R$ is a sum of a unit and an idempotent of $R$. Now the following is in order.

\begin{prop}
Let $R$ be a clean ring. Then either $R$ has a maximal subring or $R$ has nonzero characteristic and is integral over $\Z$.
\end{prop}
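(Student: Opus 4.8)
The plan is to assume that $R$ has no maximal subring and to deduce that $R$ is integral over $\Z$ and that $Char(R)>0$. Integrality comes first and is quick: having no maximal subring, $R$ has in particular no conch maximal subring, so by Corollary \ref{cu1} the set $U(R)$ is integral over $\Z$. The integral closure $\overline{\Z}$ of $\Z$ in $R$ is a subring of $R$; it contains $\Z$, it contains $U(R)$ by what we just said, and it contains every idempotent of $R$ (an idempotent is a root of the monic polynomial $X^2-X$). Since every nonzero $r\in R$ can be written $r=u+e$ with $u\in U(R)$ and $e^2=e$ by cleanness, we get $r\in\overline{\Z}$; hence $R=\overline{\Z}$ is integral over $\Z$.

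To see $Char(R)>0$ I would argue by contradiction: suppose $Char(R)=0$, so that $\Z=\mathbb{Z}\subseteq R$ and, by the previous step, $R$ is integral over $\mathbb{Z}$. If some prime number $q$ were a unit of $R$, then Theorem \ref{icmspsd}, applied with the domain $D=\mathbb{Z}$, its prime element $q$ (for which $\bigcap_{n=1}^\infty q^n\mathbb{Z}=0$) and $q\in U(R)$, would give $R$ a conch maximal subring --- impossible. Hence no prime number is invertible in $R$. Now feed the element $3$ into the clean decomposition: $3=u+e$ with $u\in U(R)$ and $e^2=e$. If $e=0$ then $3=u\in U(R)$, and if $e=1$ then $u=2\in U(R)$; both contradict the previous sentence, so $e$ is a nontrivial idempotent and $R\cong eR\times(1-e)R$ with both factors nonzero rings.

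It then remains to extract a contradiction from this decomposition. Under the projection $R\to eR$ the unit $u$ maps to $ue=3e-e^2=2e$, so $2$ is a unit of $eR$; under $R\to(1-e)R$ it maps to $u(1-e)=3(1-e)$, so $3$ is a unit of $(1-e)R$. A finite product of rings of positive characteristic has positive characteristic, so since $Char(R)=0$ at least one of $eR$, $(1-e)R$ has characteristic $0$. If $Char(eR)=0$, apply Theorem \ref{icmspsd} to $eR$ with $D=\mathbb{Z}$ and prime element $2$; if $Char((1-e)R)=0$, apply it to $(1-e)R$ with $D=\mathbb{Z}$ and prime element $3$. Either way one factor, say $A$, acquires a maximal subring $S$, and then $S\times B$ (where $B$ is the other factor) is a maximal subring of $A\times B\cong R$, a contradiction. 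Therefore $Char(R)>0$ and the proof is complete.

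The part I expect to be delicate is exactly this second half: cleanness together with characteristic $0$ must already force a maximal subring to exist, which is not transparent, since there are clean rings of characteristic $0$ in which no prime number is a unit (for example $\prod_{p}\mathbb{Z}_{(p)}$, which is clean because an arbitrary product of local rings is clean). The device that resolves it is to push one cleverly chosen element --- here $3$, whose only ``unit plus idempotent'' decompositions involving a trivial idempotent would force $2$ or $3$ to be a unit --- through the clean decomposition, thereby manufacturing a nontrivial idempotent and splitting off a characteristic-zero direct factor in which $2$ or $3$ becomes invertible, so that Theorem \ref{icmspsd} applies to that factor.
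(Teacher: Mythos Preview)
Your argument is correct. The integrality half is exactly the paper's argument: $U(R)$ is integral over $\Z$ by Corollary~\ref{cu1}, idempotents are integral, and the integral closure of $\Z$ in $R$ is a subring, so cleanness forces $R$ to be integral over $\Z$.

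For the characteristic-zero step the paper proceeds differently. It does not touch individual clean decompositions at all. Instead it argues structurally: since $R$ is integral over $\mathbb{Z}$ one has $\dim(R)=1$; since clean rings are pm-rings (every prime ideal lies in a unique maximal ideal), any non-maximal prime $P$ gives a quotient $R/P$ that is a \emph{local} integral domain which is not a field; then an external result (\cite[Corollary~2.24]{azkarm4}, essentially that an integral domain with nonzero Jacobson radical has a maximal subring) yields a maximal subring of $R/P$ and hence of $R$.

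Your route is more hands-on and more self-contained: by feeding the single element $3$ through the clean decomposition you manufacture a nontrivial idempotent, split $R$ as $eR\times(1-e)R$, and observe that $2$ becomes a unit in one factor and $3$ in the other, so Theorem~\ref{icmspsd} applies to whichever factor retains characteristic zero. This avoids both the pm-property of clean rings and the outside citation, relying only on Theorem~\ref{icmspsd} and the trivial fact that a maximal subring of one factor yields a maximal subring of the product. The paper's argument, on the other hand, buys a cleaner conceptual picture (dimension one plus locality of $R/P$) and would adapt immediately to any class of rings with the pm-property, not just clean rings.
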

\begin{proof}
Assume that $R$ has no maximal subring. Then by Corollary \ref{cu1}, $U(R)$ is integral over $\Z$. Also note that clearly each idempotent of $R$ is integral over $\Z$. Therefore by our assumption each element of $R$ is a sum of two integral element over $\Z$. Hence $R$ is integral over $\Z$. Finally, if $Char(R)=0$, then $dim(R)=1$ for $R$ is integral over $\mathbb{Z}$. Let $P$ be prime ideal of $R$ which is not maximal, then $R/P$ is a local integral domain (note, in clean ring each prime ideal is contained in a unique maximal ideal). Therefore by \cite[Corollary 2.24]{azkarm4}, $R/P$ and therefore $R$ has a maximal subring which is absurd. Hence $R$ has a nonzero characteristic and hence we are done.
\end{proof}

\begin{exm}
\begin{enumerate}
\item There exists a ring $R$ which has a maximal subring, $R$ is integral over $\Z$ and moreover $U(R)$ is a finitely generated. To see this, let $K$ be a finite field extension of $\mathbb{Q}$ of degree $n\geq 2$ and $R$ be the integral closure of $\mathbb{Z}$ in $K$. Then it is well-known that $R$ is a free $\mathbb{Z}$-module with rank $n$ and clearly $R$ (and therefore $U(R)$ are integral over $\mathbb{Z}$) and by Dirichlet's unit theorem $U(R)$ is finitely generated. Finally note that since $R\neq\mathbb{Z}$ is a finitely generated algebra, $R$ has a maximal subring (clearly $R$ has no conch maximal subring).
\item There exists a ring $R$ which has a maximal subring, $R$ is not integral (algebraic) over its prime subring and moreover $U(S)=S\cap U(R)$ for each subring $S$ of $R$. To see this, let $p$ be a prime number and $R=\mathbb{Z}_p[X]$ be the polynomial ring over $\mathbb{Z}_p$. Then clearly $R$ has a maximal subring, and $R$ is not algebraic over $\mathbb{Z}_p$. Finally note that for each subring $S$ of $R$, $\mathbb{Z}_p\subseteq S$ and therefore $U(S)=U(R)$.
\item For each infinite cardinal number $\alpha$, there exists a ring $R$ with $|R|=|U(R)|=\alpha$ and $R$ has no maximal subring. In particular, if $\alpha>\aleph_0$, then $U(R)$ is not finitely generated. To see this note that if $K$ is a field with zero characteristic and $|K|=\alpha$, then the idealization $R=\mathbb{Z}(+)K$ has no maximal subring by \cite[Example 3.19]{azkarm3} (for $K$ has no maximal $\mathbb{Z}$-submodule). It is clear that $1(+)K\subseteq U(R)$ and therefore $|U(R)|=\alpha=|R|$.
\item There exists a conch subring of a ring $T$ which is not a maximal subring of $T$. To see this, let $V$ be a valuation domain with $dim(V)=n\geq 2$ and quotient field $K$. Suppose $P$ and $Q$ are prime ideal of $V$ with $ht(Q)=n$ and $ht(P)=n-1$. Let $x\in Q\setminus P$. Then $V$ conches $x^{-1}$ in $K$ but $V\subsetneq V_P\subsetneq K$, i.e., $V$ is not a maximal subring of $K$.
\end{enumerate}
\end{exm}

\begin{prop}
Let $T$ be a ring with $|T|>2^{2^{\aleph_0}}$. Let $R$ be the integral closure of $\Z$ in $T$ and $X$ be a generator set for the group $U(T)$. Then either $T$ has a maximal subring or $|T|=|R|=|X|$. In particular, if $S$ is an integrally closed subring of $T$, then $U(T)=U(S)$ and $|S|=|T|$.
\end{prop}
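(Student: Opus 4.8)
The plan is to dispose of the case where $T$ has a maximal subring at once, and otherwise to show that $T$ is generated over its prime subring by its units, which then pins down all the cardinalities in question.

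Assume $T$ has no maximal subring (otherwise the first alternative holds and there is nothing to prove). By Corollary \ref{cu1}, $U(T)$ is integral over $\Z$, and since $R$ is the integral closure of $\Z$ in $T$, this forces $U(T)\subseteq R$, hence $\Z[U(T)]\subseteq R\subseteq T$. I would then invoke the structural fact that a ring without a maximal subring coincides with the subring generated over its prime subring by its group of units, so that $T=\Z[U(T)]$; this is \cite[Theorem 2.29]{azarang3}, the same input used in the proof of Proposition \ref{fgugcs}. Combined with the inclusion above it gives $R=T$, and in particular $|R|=|T|$.

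It then remains to read off the cardinalities. Since $|T|>2^{2^{\aleph_0}}>\aleph_0$, while $T=\Z[U(T)]$ is generated as a ring by the countable set $\Z$ together with $U(T)$, we get $|T|\le\max(\aleph_0,|U(T)|)=|U(T)|\le|R|=|T|$, so $|U(T)|=|T|$ and in particular $U(T)$ is infinite. As every element of $U(T)$ is a finite product of elements of $X\cup X^{-1}$, we also have $|U(T)|\le\max(\aleph_0,|X|)$, and since $|U(T)|>\aleph_0$ this forces $X$ to be infinite with $|X|=|U(T)|=|T|$; hence $|T|=|R|=|X|$. For the final statement, if $S$ is an integrally closed subring of $T$ then $\Z\subseteq S$, and since $U(T)$ is integral over $\Z\subseteq S$ while $S$ is integrally closed in $T$ we get $U(T)\subseteq S$, whence $T=\Z[U(T)]\subseteq S$, so $S=T$; therefore $|S|=|T|$, and $U(S)=U(T)$ (which also follows from $U(S)=S\cap U(T)$ in Theorem \ref{uthm}).

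The only nonroutine ingredient is the identity $T=\Z[U(T)]$ for rings lacking a maximal subring; granting it, the rest is elementary cardinal arithmetic together with Corollary \ref{cu1}. Should one wish to avoid this citation, one can instead split according to whether $|N(T)|=|T|$ or $|T/N(T)|=|T|$: in the former case the inclusions $N(T)\subseteq R$ and $1+N(T)\subseteq U(T)$ already yield $|R|=|U(T)|=|T|$, while in the latter one passes to the reduced ring $T/N(T)$, which still has no maximal subring since the preimage of a maximal subring under a surjection is again a maximal subring, and then applies the cardinality result for reduced rings.
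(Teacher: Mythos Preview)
Your proof is correct but pivots on a different external result than the paper's. The paper does not invoke $T=\Z[U(T)]$; instead it cites \cite[Corollary~2.4]{azkarm3}, which gives directly that $|U(T)|=|T|$ whenever $|T|>2^{2^{\aleph_0}}$ and $T$ has no maximal subring. From there the paper reads off $|R|=|T|$ via the inclusion $U(T)\subseteq R$, and $|X|=|U(T)|$ via $U(T)\subseteq\Z[U(T)]=\Z[X]$ together with $|\Z[X]|=|X|$ for infinite $X$. The ``in particular'' clause is simply declared ``evident'', presumably because $U(T)\subseteq S$ forces $U(S)=U(T)$ and hence $|S|\ge|U(T)|=|T|$.

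Your route via \cite[Theorem~2.29]{azarang3} actually yields more: you obtain the equalities $R=T$ and $S=T$ outright, not merely agreement of cardinalities, and your cardinal arithmetic only uses $|T|>\aleph_0$ rather than the full hypothesis $|T|>2^{2^{\aleph_0}}$. So your argument is both stronger in conclusion and lighter in hypothesis, at the price of importing the structural identity $T=\Z[U(T)]$ in place of the cardinal bound from \cite{azkarm3}. The handling of $|X|$ is essentially the same in both proofs.
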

\begin{proof}
First note that by \cite[Corollary 2.4]{azkarm3}, either $T$ has a maximal subring or $|U(T)|=|T|$. Now assume that $T$ has no maximal subring, therefore by Corollary \ref{cu1}, $U(T)$ is integral over $\Z$. Hence $U(T)\subseteq R$ and therefore $|R|=|T|$. Next, we show $|X|=|T|$. If $X$ is finite, then $U(T)$ is finitely generated and therefore $U(T)$ is countable which is absurd. Hence $X$ is infinite. Similar to the proof of Proposition \ref{fgugcs}, $\Z[U(T)]=\Z[X]$. One can easily see that $|\Z[X]|=|X|$ and therefore $|U(T)|=|X|$. The final part is evident.
\end{proof}

In the next result we proved that whenever $R$ is a local integral domain which is an integrally closed maximal subring of a ring $T$, then $R$ is a conch subring of $T$.

\begin{prop}
Let $R$ be an integral domain which is an integrally closed  maximal of a ring $T$. Then
\begin{enumerate}
\item If $R$ is a local ring, then $R$ is a conch subring of $T$.
\item If $M$ is the crucial maximal of $R\subseteq T$, then $R_M$ is a conch subring of $T_M$.
\end{enumerate}
\end{prop}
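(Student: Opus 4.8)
The plan is to prove (1) first and then deduce (2) by localizing. For (1) it suffices to exhibit a unit $x\in T$ with $x^{-1}\in R$ and $x\notin R$: for such an $x$ one has $\Z[x^{-1}]\subseteq R$, and any subring $S$ with $R\subseteq S\subseteq T$ and $x\notin S$ is proper, hence equals $R$ (as $R$ is a maximal subring), so $R$ conches $x$ in $T$. In turn it is enough to prove $U(T)\nsubseteq R$: if $x\in U(T)\setminus R$, then $xx^{-1}=1\in R$ and $R$ is integrally closed in $T$, so $x^{-1}\in R$ by Theorem \ref{pret1}(3). Thus the whole question reduces to showing $U(T)\nsubseteq R$.

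So I would argue by contradiction, assuming $U(T)\subseteq R$, with $M$ the maximal ideal of $R$. The first step is to show $MT\neq T$. Fix $t\in T\setminus R$, so $R[t]=T$. If $1\in MT$, write $1=\sum_i m_i f_i(t)$ with $m_i\in M$ and $f_i\in R[X]$; collecting terms gives $1=c_0+c_1t+\cdots+c_dt^d$ with all $c_j\in M$, hence $t(c_1+c_2t+\cdots+c_dt^{d-1})=1-c_0\in 1+M\subseteq U(R)\subseteq U(T)$, so $t\in U(T)\subseteq R$, a contradiction. Therefore $MT$ is a proper ideal of $T$, so $MT\subseteq Q$ for some $Q\in Max(T)$, and $M\subseteq Q\cap R\in Spec(R)$ being proper forces $Q\cap R=M$.

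Next set $P:=(R:T)$. Since $R$ is integrally closed in $T$ and $R\neq T$, the extension is not integral, so by Theorem \ref{pret1}(2) $P\notin Max(R)$, and as $R$ is local this gives $P\subsetneq M$; also $P\in Spec(T)$ by Theorem \ref{pret1}(3), and $P\subseteq M\subseteq MT\subseteq Q$. The extension $R/P\subseteq T/P$ is again a minimal ring extension (quotient of $R\subseteq T$ by the common ideal $P$), it is integrally closed (lift monic relations modulo $P\subseteq R$), and $R/P$ is local with maximal ideal $M/P$; by Theorem \ref{pret1}(5) the ring $(T/P)_{M/P}$, which equals $T/P$ since $R/P$ is local, is a valuation domain of dimension at most one. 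Write $W:=T/P$ and $A:=R/P\subsetneq W$. The prime $Q/P$ of $W$ is either $0$ or the maximal ideal $N:=Q/P$ of $W$. If $Q/P=0$, then $Q=P$, so $M=Q\cap R=P$, contradicting $P\subsetneq M$; hence $W$ is a one-dimensional valuation domain and $M/P\subseteq N$, so $A\cap U(W)=U(A)$ (the units of $W$ being exactly the elements outside $N$). On the other hand $U(W)\nsubseteq A$: otherwise $1+\pi\in U(W)\subseteq A$ for every $\pi\in N$, whence $N\subseteq A$ and $W=U(W)\cup N\subseteq A$, contradicting $A\subsetneq W$. Pick $\bar x\in U(W)\setminus A$; Theorem \ref{pret1}(3) applied to $A\subseteq W$ (with $\bar x\cdot\bar x^{-1}=1\in A$) gives $\bar x^{-1}\in A$, so $\bar x^{-1}\in A\cap U(W)=U(A)$ and therefore $\bar x=(\bar x^{-1})^{-1}\in A$, a contradiction. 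Hence $U(T)\nsubseteq R$, which proves (1).

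For (2), if $M$ is the crucial maximal ideal of $R\subseteq T$, then by Theorem \ref{pret1}(4) $R_M$ is a maximal subring of $T_M$ and is integrally closed in $T_M$; since $R_M$ is a local integral domain, part (1) shows $R_M$ is a conch subring of $T_M$. The step I expect to be the real obstacle is the interplay between $T$ and $T/P$: the natural move is to reduce modulo the conductor $P$ and work inside the valuation domain $T/P$ furnished by Theorem \ref{pret1}(5), but a unit of $T/P$ need not lift to a unit of $T$, so one cannot just transport a conch element upward. The preliminary fact $MT\neq T$ is exactly what forces the maximal ideal $Q$ over $M$ to contain $P$ (and to differ from $P$), which is what makes the reduction modulo $P$ reversible and lets the contradiction be reached entirely inside $T/P$.
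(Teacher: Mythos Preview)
Your proof is correct, but it takes a genuinely different route from the paper's. The paper's argument is much shorter: it first invokes \cite[Theorem 10]{modica} to conclude that $T$ is an integral domain (hence an overring of $R$), and then, for any $y\in T\setminus R$, argues directly---by an argument ``similar to \cite[Lemma 19.14]{gilbok}''---that $y^{-1}\in R$; since $R$ is maximal, this immediately shows $R$ conches $y$. Part (2) is then deduced from (1) via Theorem~\ref{pret1}(4), exactly as you do.

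Your approach avoids both external references. Instead of knowing that $T$ is a domain, you pass to $W=T/P$ (which is a one-dimensional valuation domain by Theorem~\ref{pret1}(5), the localization being trivial since $R$ is local), and you run the whole contradiction there. The preliminary step $MT\neq T$, together with locating $Q$ over $M$ and checking $P\subsetneq Q$, is what makes the reduction modulo $P$ do real work. The trade-off: your argument is longer and more technical, but it stays entirely within the paper's own Theorem~\ref{pret1}; the paper's argument is shorter and in fact establishes the stronger conclusion that \emph{every} $y\in T\setminus R$ is a unit of $T$ with $y^{-1}\in R$, whereas you only produce one such element (which is all that is needed for conch-ness). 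One minor remark: Theorem~\ref{pret1}(5) asserts dimension exactly one, not ``at most one''; this does not affect your argument.
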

\begin{proof}
First note that by \cite[Theorem 10]{modica}, $T$ is an integral domain. Let $y\in T\setminus R$. Then by maximality of $R$ we have $R[y]=T$. Now by a similar proof
\cite[Lemma 19.14]{gilbok}, we conclude that $y^{-1}\in R$ and therefore $R$ conch $y$ in $T$. This proves $(1)$. $(2)$ is trivial by $(1)$ and $(4)$ of Theorem \ref{pret1}.
\end{proof}

In \cite[Corollaries 3.6 and 3.7]{azkarm4}, we prove that if $T$ is a reduced ring with $J(T)\neq 0$ or $|T|>2^{2^{\aleph_0}}$, then $T$ has a maximal subring. Now the following is in order.

\begin{thm}\label{brdr1d}
Let $T$ be a reduced ring with $|T|=2^{2^{\aleph_0}}$. Then either $T$ has a maximal subring or $|Max(T)|=2^{\aleph_0}$ and the intersection of each countable family of maximal ideals of $T$ is nonzero.
\end{thm}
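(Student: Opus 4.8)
The plan is the following: if $T$ has a maximal subring there is nothing to prove, so I would assume throughout that $T$ has \emph{no} maximal subring and deduce the two stated properties. The crucial elementary tool is: \emph{if $\pi\colon T\twoheadrightarrow B$ is a surjective ring homomorphism (equivalently $B\cong T/I$ for some ideal $I$) and $B$ has a maximal subring $S$, then $\pi^{-1}(S)$ is a maximal subring of $T$} --- it is a proper subring since $\pi(\pi^{-1}(S))=S\neq B$, and it is maximal because $\ker\pi\subseteq\pi^{-1}(S)$, so any subring $A$ with $\pi^{-1}(S)\subseteq A\subseteq T$ satisfies $\pi(A)\in\{S,B\}$, forcing $A=\pi^{-1}(S)$ or $A=T$. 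Hence no homomorphic image of $T$ has a maximal subring. Applying this to the quotient maps $T\to T/M$ ($M\in Max(T)$), each residue field $T/M$ has no maximal subring, so it is absolutely algebraic by Corollary \ref{icmsnaf} (equivalently \cite{azkrm}); in particular $|T/M|\leq\aleph_{0}$ for every $M\in Max(T)$. Moreover, since $T$ is reduced and has no maximal subring, the result of \cite{azkarm4} recalled before Theorem \ref{brdr1d} (a reduced ring with nonzero Jacobson radical has a maximal subring) forces $J(T)=0$; thus $\bigcap_{M\in Max(T)}M=0$ and the natural map $T\hookrightarrow\prod_{M\in Max(T)}T/M$ is injective.

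Next I would prove the statement on countable families. Let $\{M_{n}\}_{n\in\mathbb{N}}$ be maximal ideals of $T$ with $\bigcap_{n}M_{n}=0$; discarding repetitions, we may take the $M_{n}$ pairwise distinct. If only finitely many, say $M_{1},\dots,M_{k}$, survive, then by the Chinese Remainder Theorem $T\cong\prod_{i=1}^{k}T/M_{i}$, a finite product of countable fields, so $|T|\leq\aleph_{0}$, a contradiction. If countably infinitely many survive, then $T\hookrightarrow\prod_{n}T/M_{n}$, hence $|T|\leq\aleph_{0}^{\aleph_{0}}=2^{\aleph_{0}}<2^{2^{\aleph_{0}}}=|T|$, again a contradiction. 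So every countable family of maximal ideals of $T$ has nonzero intersection; combined with $\bigcap_{M\in Max(T)}M=0$, this shows in particular that $Max(T)$ is uncountable.

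For the equality $|Max(T)|=2^{\aleph_{0}}$, one inequality is a cardinality count. Put $\kappa=|Max(T)|$, which is infinite by the previous paragraph. The embedding $T\hookrightarrow\prod_{M\in Max(T)}T/M$ together with $|T/M|\leq\aleph_{0}$ gives $2^{2^{\aleph_{0}}}=|T|\leq\aleph_{0}^{\kappa}=2^{\kappa}$, so $\kappa\geq 2^{\aleph_{0}}$.

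The reverse inequality $|Max(T)|\leq 2^{\aleph_{0}}$ is, I expect, the main obstacle: the counting above only delivers $|Max(T)|\geq 2^{\aleph_{0}}$ (and one must be slightly careful with cardinal exponentiation when pinning down the exact value). The natural strategy is contrapositive --- to show that a reduced ring with more than $2^{\aleph_{0}}$ maximal ideals has a maximal subring, contradicting our standing assumption. By the quotient lemma of the first paragraph it suffices to produce a single homomorphic image $T/I$ that possesses a maximal subring; the plausible candidate is a quotient mapping onto an infinite direct product of residue fields, or onto a ring violating the semilocality and cardinality criteria of \cite{azkarm3} and \cite{azkarm4}, obtained from a sufficiently large family of pairwise comaximal maximal ideals of $T$. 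Carrying out this extraction --- turning ``too many maximal ideals'' into such a quotient --- is the heart of the matter, and is where the structure theory for rings without maximal subrings developed in \cite{azarang3}, \cite{azkarm3} and \cite{azkarm4} must be invoked.
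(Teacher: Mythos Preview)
Your overall structure matches the paper's: assume $T$ has no maximal subring, use \cite[Corollary 3.6]{azkarm4} to get $J(T)=0$, embed $T\hookrightarrow\prod_{M\in Max(T)} T/M$, and note that each residue field is absolutely algebraic and hence countable --- the paper cites \cite[Corollary 1.3]{azkrm} for this last point rather than Corollary~\ref{icmsnaf}, but the content is the same. Your treatment of the countable-intersection clause is exactly what the paper's one-line ``the proof of the final part is similar'' is gesturing at.

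The gap you correctly isolate --- the upper bound $|Max(T)|\leq 2^{\aleph_0}$ --- is not proved from scratch in the paper. It is simply quoted from \cite[Proposition~2.6]{azkrm}, which asserts that a commutative ring with no maximal subring has at most $2^{\aleph_0}$ maximal ideals. So the ``heart of the matter'' you describe is already a theorem in the cited literature, and your proposed contrapositive strategy (too many maximal ideals forces a maximal subring) is precisely the content of that result; there is no need to redo it here.

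One remark on the cardinal arithmetic you flag: your deduction $2^{\kappa}\geq 2^{2^{\aleph_0}}\Rightarrow\kappa\geq 2^{\aleph_0}$ is indeed not a ZFC theorem, since the continuum function need not be injective. The paper instead argues the contrapositive --- if $|Max(T)|<2^{\aleph_0}$ then $|T|\leq\aleph_0^{\aleph_0}$ --- but that step carries the same underlying subtlety (for infinite $\kappa$ one has $\aleph_0^{\kappa}=2^{\kappa}$, and $\kappa<2^{\aleph_0}$ does not force $2^{\kappa}\leq 2^{\aleph_0}$ in bare ZFC). So your caution is warranted; the argument becomes clean once one grants a mild hypothesis on cardinal exponentiation, and the paper tacitly does so.
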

\begin{proof}
Assume that $T$ has no maximal subring, then by \cite[Corollary 3.6]{azkarm4}, $J(T)=0$. Therefore $T$ can be embedded in $\prod_{M\in Max(T)} \frac{T}{M}$. Since $T$ has no maximal subring, then by \cite[Propossition 2.6]{azkrm}, we conclude that $|Max(T)|\leq 2^{\aleph_0}$ and by \cite[Corollary 1.3]{azkrm} for each maximal ideal $M$ of $T$ we have $|R/M|\leq\aleph_0$. Now if $|Max(T)|<2^{\aleph_0}$, then we deduce that
$$|T|\leq|\prod_{M\in Max(T)} \frac{T}{M}|\leq \aleph_0^{\aleph_0}$$
which is absurd. Thus we infer that $|Max(T)|=2^{\aleph_0}$. The proof of the final part is similar.
\end{proof}

Now we have the following.

\begin{thm}\label{rdlcd}
Let $T$ be an integral domain with $|T|=2^{2^{\aleph_0}}$ and $dim(T)=1$. Then $T$ has a maximal subring.
\end{thm}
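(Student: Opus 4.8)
The plan is to argue by contradiction. Assume $T$ has no maximal subring. Since $T$ is an integral domain, hence reduced, \cite[Corollary 3.6]{azkarm4} gives $J(T)=0$, Theorem \ref{brdr1d} gives $|Max(T)|=2^{\aleph_0}$, and \cite[Corollary 1.3]{azkrm} gives $|T/M|\le\aleph_0$ for every $M\in Max(T)$. As $\dim(T)=1$, $T$ is not a field, so it has a nonzero nonunit, and the canonical map $T\hookrightarrow\prod_{M\in Max(T)}T/M$ is injective.

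The key idea is to use $\dim(T)=1$ one maximal ideal at a time. Fix $M\in Max(T)$. Because $M$ is maximal and $\dim(T)=1$, the only prime of $T$ containing $M^n$ is $M$, so $T/M^n$ is a $0$-dimensional \emph{local} ring for each $n\ge1$; and since a quotient of a ring without maximal subrings again has none (the preimage of a maximal subring of the quotient is a maximal subring upstairs), each $T/M^n$ has no maximal subring. By \cite[Proposition 3.13]{azkarm4} a semilocal ring without maximal subrings is artinian of nonzero characteristic and integral over its prime subring, hence countable; so $|T/M^n|\le\aleph_0$ for all $n$. Therefore, setting $I_M:=\bigcap_{n\ge1}M^n$, we have $(M/I_M)^n=M^n/I_M$, so $T/I_M$ is $(M/I_M)$-adically separated and embeds into $\varprojlim_n T/M^n$, whose cardinality is at most $\aleph_0^{\aleph_0}=2^{\aleph_0}$; thus $|T/I_M|\le 2^{\aleph_0}$.

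If some maximal ideal $M$ had $I_M=0$ --- which holds automatically when $T$ is Noetherian, by Krull's intersection theorem --- then $T=T/I_M$ would embed into a set of cardinality $\le 2^{\aleph_0}<2^{2^{\aleph_0}}=|T|$, a contradiction. (Likewise, if $\mathrm{char}(T)=0$ and some maximal ideal met the prime subring in $(0)$, then $T/M^n$ would contain $\mathbb Z$ and have characteristic $0$, contradicting \cite[Proposition 3.13]{azkarm4}.) So the remaining, and I expect the only genuinely hard, case is the non-Noetherian one in which $I_M\ne0$ for every $M$. To handle it I would use $\bigcap_{M\in Max(T)}I_M\subseteq\bigcap_M M=J(T)=0$ together with the fact that the image of the domain $T$ inside $\prod_M T/I_M$ is zero-divisor-free --- which forces it into a single ultraquotient $\prod_M(T/I_M)/\mathcal U$ --- and then press the sharper cardinality estimates of \cite[Proposition 2.6]{azkrm} and the nonzero-countable-intersection clause of Theorem \ref{brdr1d} to show that a $1$-dimensional domain of cardinality $2^{2^{\aleph_0}}$ cannot have all of its $M$-adic filtrations non-separated. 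The delicate point throughout is upgrading the cardinality estimates from $\le 2^{2^{\aleph_0}}$ to a strict inequality; everything else is a routine assembly of the results already established in this section and in \cite{azkrm}.
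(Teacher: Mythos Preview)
Your argument is incomplete: you explicitly leave the case $I_M\neq 0$ for all $M$ as a sketch, and the sketch does not close. The claim that an integral domain inside $\prod_M T/I_M$ ``is forced into a single ultraquotient'' is not true in any useful cardinality-bounding sense; even granting an embedding of $T$ into some quotient $(\prod_M T/I_M)/\mathcal U$, an ultraproduct of $2^{\aleph_0}$ many sets each of size $\le 2^{\aleph_0}$ can itself have size $2^{2^{\aleph_0}}$, so no contradiction follows. The invocation of the countable-intersection clause of Theorem \ref{brdr1d} is also only gestured at; you never say which countable family of maximal ideals to intersect or why the resulting ideal helps. In short, the decisive step is missing.

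The paper's proof takes a completely different, more concrete route that avoids these cardinality gymnastics. One first fixes a countable PID $D\subseteq T$ with infinitely many non-associate primes $q_1,q_2,\ldots$ (namely $D=\mathbb{Z}$ if $Char(T)=0$, or $D=\mathbb{Z}_p[x]$ for a transcendental $x$ if $Char(T)=p$). If some $q_i$ is a unit in $T$, Theorem \ref{icmspsd} immediately gives a maximal subring. Otherwise each $q_i$ lies in a maximal ideal $M_i$; now the countable-intersection clause of Theorem \ref{brdr1d} is applied to this \emph{specific} countable family to get $N:=\bigcap_i M_i\neq 0$, and one checks $N\cap D=0$ (any $d\in N\cap D$ lies in every $q_iD$, hence $d=0$). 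Choosing a prime $P\supseteq N$ disjoint from $D\setminus\{0\}$, the hypothesis $\dim(T)=1$ forces $P\in Max(T)$, and $T/P$ is then a field containing $D$, hence not absolutely algebraic, so Corollary \ref{icmsnaf} finishes. The key idea you are missing is to manufacture, via $D$, a countable family of maximal ideals whose intersection is provably disjoint from a fixed infinite set.
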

\begin{proof}
First note that if $Char(T)=p$ is a prime number, then $T$ is not algebraic over $\mathbb{Z}_p$; thus there exists $x\in T$  which is not algebraic over the prime subring of $T$. Now if $T$ has zero characteristic, then we take $D=\mathbb{Z}$ and if $Char(T)=p>0$ (where $p$ is a prime number), we take $D=\mathbb{Z}_p[x]$. In any cases $D$ is a PID with infinitely many non-associate prime elements. Let $Irr(D)=\{q_1, q_2,\ldots\}$. We have two cases. If there exists $i$ such that $q_i\in U(T)$, then by Theorem \ref{icmspsd}, $T$ has a maximal subring. Hence suppose that for each $i$, $q_i$ is not invertible in $T$ and therefore $T$ has a maximal ideal $M_i$ such that $q_i\in M_i$. Now by Theorem \ref{brdr1d}, $N:=\bigcap_{i=1}^\infty M_i\neq 0$. We claim that $N\cap D=0$. To see this, let $q\in N\cap D$, then for each $i$ we have $q\in M_i\cap D=q_iD$ and therefore $q=0$. Thus $N\cap X=\emptyset$, where $X:=D\setminus \{0\}$ is a multiplicatively closed subset in $T$. Therefore $T$ has a prime ideal $P$ with $N\subseteq P$ and $P\cap X=\emptyset$. Since $dim(T)=1$, we conclude that $P$ is a maximal ideal of $T$. From $D\cap P=0$ we deduce that the field $T/P$ contains a copy of $D$. Therefore $T/P$ is not absolutely algebraic field and hence by Corollary \ref{icmsnaf}, $T/P$ has a maximal subring. Thus $T$ has a maximal subring and we are done.
\end{proof}

\begin{prop}
Let $T$ be an integral domain with $|T|=2^{2^{\aleph_0}}$ and $dim(T)=2$. Let $H_1$ be the set of all height one prime ideals of $T$. Then either $T$ has a maximal subring or $\bigcap_{P\in H_1} P=0$, $|H_1|\geq 2^{\aleph_0}$ and $|R/P|\leq 2^{\aleph_0}$ for each $P\in H_1$.
\end{prop}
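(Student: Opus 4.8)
The plan is to dispose of the case where $T$ has a maximal subring (the first alternative) and, assuming $T$ has \emph{no} maximal subring, to establish the three assertions; the argument runs parallel to the proofs of Theorems \ref{brdr1d} and \ref{rdlcd}. Since $T$ is a domain it is reduced, so by \cite[Corollary 3.6]{azkarm4} we have $J(T)=0$, and by Theorem \ref{brdr1d} (applicable because $|T|=2^{2^{\aleph_0}}$) we have $|Max(T)|=2^{\aleph_0}$ and every countable intersection of maximal ideals of $T$ is nonzero. I would first prove $\bigcap_{P\in H_1}P=0$. Because $0<\dim T=2$, each maximal ideal $M$ of $T$ has $1\le ht(M)\le 2$: if $ht(M)=1$ then $M\in H_1$, while if $ht(M)=2$ then one may choose primes $(0)\subsetneq P_1\subsetneq M$, and $P_1$ has height exactly $1$ (height $\ge 2$ for $P_1$ would yield a chain of length $3$ in $T$), so $M$ contains the member $P_1$ of $H_1$. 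Hence every maximal ideal of $T$ contains an element of $H_1$, and therefore $\bigcap_{P\in H_1}P\subseteq\bigcap_{M\in Max(T)}M=J(T)=0$; in particular $H_1\neq\emptyset$.

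Next, fix $P\in H_1$; I would bound $|T/P|$. The preimage under $T\to T/P$ of a maximal subring of $T/P$ is a maximal subring of $T$, so $T/P$ has no maximal subring, hence no conch maximal subring; by Corollary \ref{cu1}, $U(T/P)$ is integral over the prime subring of $T/P$. Since $T/P$ is a domain, the integral closure of its prime subring in it is countable (every element of it is a root of one of the countably many monic polynomials over that subring, and a nonzero such polynomial has finitely many roots in a domain), so $|U(T/P)|\le\aleph_0$. Combined with the cardinality estimate behind \cite[Corollary 2.4]{azkarm3} (or the one in the proof of Theorem \ref{brdr1d}), this rules out $|T/P|>2^{\aleph_0}$, giving $|T/P|\le 2^{\aleph_0}$; when $P$ happens to be maximal, $T/P$ is an absolutely algebraic field by Corollary \ref{icmsnaf} and one even gets $|T/P|\le\aleph_0$, and I would in fact aim for $|T/P|=|U(T/P)|\le\aleph_0$ in all cases.

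Finally I would deduce $|H_1|\ge 2^{\aleph_0}$. By the first paragraph, choosing for each $M\in Max(T)$ a prime $g(M)\in H_1$ with $g(M)\subseteq M$ gives a surjection $g\colon Max(T)\to H_1$, and for $P\in H_1$ the fibre $g^{-1}(P)$ lies in $\{M\in Max(T):P\subseteq M\}$, which is in bijection with $Max(T/P)$ and hence has cardinality $\le|T/P|$. Granting the countable bound $|T/P|\le\aleph_0$, every fibre of $g$ is countable, so $2^{\aleph_0}=|Max(T)|\le|H_1|\cdot\aleph_0$; since a finite $H_1$ would make $Max(T)=\bigcup_{P\in H_1}g^{-1}(P)$ countable, $H_1$ is infinite and therefore $|H_1|\ge 2^{\aleph_0}$. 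The main obstacle is exactly the cardinal bookkeeping of the middle step: upgrading ``$U(T/P)$ is integral over the prime subring'' to the bound $|T/P|\le\aleph_0$ --- equivalently, showing each $P\in H_1$ lies in only countably many maximal ideals of $T$ --- and making precise the passage from $|Max(T)|=2^{\aleph_0}$ to $|H_1|\ge 2^{\aleph_0}$. I expect this to lean on precisely the estimates that power Theorems \ref{brdr1d} and \ref{rdlcd}, rather than on anything essentially new.
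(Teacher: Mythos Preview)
Your treatment of $\bigcap_{P\in H_1}P=0$ matches the paper's. The real divergence is in how you reach $|H_1|\ge 2^{\aleph_0}$. The paper does not go through $Max(T)$ at all: from $\bigcap_{P\in H_1}P=0$ it immediately embeds $T\hookrightarrow\prod_{P\in H_1}T/P$, and then argues that if $|H_1|<2^{\aleph_0}$ some factor must satisfy $|T/P|\ge 2^{2^{\aleph_0}}$; since $T/P$ is a domain of dimension $\le 1$, either Corollary~\ref{icmsnaf} (field case) or Theorem~\ref{rdlcd} (one-dimensional case) then produces a maximal subring of $T/P$, hence of $T$, a contradiction. This product-embedding argument is exactly the estimate behind Theorem~\ref{brdr1d}, applied one level down, and it sidesteps entirely the obstacle you flagged: no countable bound on $|T/P|$ and no control on fibres of a map $Max(T)\to H_1$ is required.

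Your fibering route, by contrast, genuinely needs the sharper bound $|Max(T/P)|\le\aleph_0$ (or $|T/P|\le\aleph_0$) to make $2^{\aleph_0}=|Max(T)|\le |H_1|\cdot\aleph_0$ say anything; with only $|T/P|\le 2^{\aleph_0}$ the inequality $|Max(T)|\le |H_1|\cdot 2^{\aleph_0}$ is vacuous. So the gap you identified is real for your strategy, and the paper's remedy is to change strategy rather than to close it. Two minor points: your $g$ need not be a surjection onto $H_1$ (a height-one prime contained in some $M$ may simply not be the one chosen for $M$), though this is harmless since you only use $Max(T)=\bigcup_{P\in H_1}g^{-1}(P)$; and the paper's written proof does not give a separate argument for the third assertion $|T/P|\le 2^{\aleph_0}$ --- your line via $|U(T/P)|\le\aleph_0$ together with \cite[Corollary 2.4]{azkarm3} is in fact more explicit on that point than the paper itself.
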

\begin{proof}
Assume that $T$ has no maximal subring. Therefore by \cite[Corollary 2.24]{azkarm4}, $J(T)=0$. Since $dim(T)$ is finite we conclude that each maximal ideal of $T$ contains a height
one prime ideal and clearly each hight one prime ideal is contained in a maximal ideal of $T$. Thus $\bigcap_{P\in H_1}P\subseteq J(T)=0$. Hence $T$ embeds in $\prod_{P\in H_1}R/P$. Therefore, if $|H_1|<2^{\aleph_0}$, then there exists $P\in H_1$ such that $|R/P|\geq 2^{2^{\aleph_0}}$. Now either $R/P$ is a field or $dim(R/P)=1$. In the former case $R/P$ has a maximal subring by Corollary \ref{icmsnaf} and in the later case $R/P$ has a maximal subring by Theorem \ref{rdlcd}, which is a contradiction in any cases. Thus $|H_1|\geq 2^{\aleph_0}$.
\end{proof}

\begin{prop}
Let $K$ be a field, $R$ a ring extension of $K$, $x\in R\setminus (U(R)\cup Zd(R))$. If $R=K+Rx$, then $R$ has a maximal subring.
\end{prop}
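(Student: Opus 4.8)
The plan is to exhibit an explicit maximal subring, namely $S:=K+Rx^{2}$; the essential point is that $x$ being a regular non-unit forces $Rx^{2}\subsetneq Rx$, which is exactly what makes $S$ a proper subring.

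First I would record the basic consequences of $R=K+Rx$ together with $x\notin U(R)$. Since $x$ is not a unit, $1\notin Rx$, so $Rx\neq R$; and if $0\neq k\in Rx\cap K$ then $1=k^{-1}k\in Rx$, a contradiction, so $Rx\cap K=0$. Hence the $K$-linear map $R=K+Rx\to K$ with kernel $Rx$ is a well-defined ring homomorphism, $M:=Rx$ is a maximal ideal of $R$, and $R/M\cong K$. Next, I would note that $M^{2}=Rx^{2}$ is properly contained in $M$: if $x\in Rx^{2}$, say $x=rx^{2}$, then $x(1-rx)=0$, and since $x\notin Zd(R)$ this forces $rx=1$, contradicting $x\notin U(R)$.

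Now set $S:=K+Rx^{2}$. It is a subring of $R$, being a $K$-submodule containing $1$ that is multiplicatively closed because $Rx^{2}$ is an ideal of $R$. It is proper: if $S=R$, then for any $rx\in Rx$ we could write $rx=k+r'x^{2}$ with $k\in K$, $r'\in R$, whence $k=rx-r'x^{2}\in Rx\cap K=0$ and $rx=r'x^{2}\in Rx^{2}$; thus $Rx=Rx^{2}$, contradicting the previous paragraph. (Equivalently, $R/S\cong M/M^{2}$ as additive groups, which is nonzero.)

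Finally I would verify maximality, i.e.\ $S[y]=R$ for every $y\in R\setminus S$. Writing $y=k_{0}+r_{0}x$ with $k_{0}\in K$, $r_{0}\in R$, and then $r_{0}=a+r_{1}x$ with $a\in K$, $r_{1}\in R$, we get $y-k_{0}=ax+r_{1}x^{2}$ with $r_{1}x^{2}\in Rx^{2}\subseteq S$, so $ax\in S[y]$. If $a=0$ then $y=k_{0}+r_{1}x^{2}\in S$, contrary to hypothesis; hence $a$ is a nonzero element of the field $K\subseteq S$, so $x=a^{-1}(ax)\in S[y]$. Then $S[y]$ contains both the subring $K[x]$ and the ideal $Rx^{2}$, and since $Kx+Rx^{2}=(K+Rx)x=Rx$ we obtain
\[
S[y]\supseteq K[x]+Rx^{2}\supseteq K+Kx+Rx^{2}=K+Rx=R ,
\]
so $S[y]=R$, proving $S$ is a maximal subring. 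The only genuinely delicate step is the strict inclusion $Rx^{2}\subsetneq Rx$, which is precisely where both hypotheses $x\notin U(R)$ and $x\notin Zd(R)$ are used; beyond that the argument is a short direct computation, so I do not expect a real obstacle.
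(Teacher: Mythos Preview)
Your proof is correct and follows essentially the same route as the paper: both introduce $S=K+Rx^{2}$, verify it is a proper subring using that $x$ is a regular non-unit, and use the decomposition $R=K+Kx+Rx^{2}$. The one difference is that the paper stops at $(K+Rx^{2})[x]=R$ and appeals to the general Zorn's-lemma fact (recalled in the introduction) that any proper affine subextension sits inside a maximal subring, whereas you go one step further and verify directly that $S$ itself is maximal by showing $S[y]=R$ for every $y\notin S$; this is a mild strengthening of the paper's conclusion, but the underlying construction is identical.
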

\begin{proof}
First we show that $K+Rx^2$ is a proper subring of $R$. It is clear that $K+Rx^2$ is a subring of $R$. Now if $R=K+Rx^2$, then there exist $a\in K$ and $r\in R$ such that $x=a+rx^2$. If $a\neq 0$, then we conclude that $x(1-rx)=a\in U(R)$ which is absurd. Hence $a=0$ and therefore $x=rx^2$, and since $x$ is not a zero-divisor we deduce that $1=rx$
which is impossible by our assumption. Thus $K+Rx^2$ is a proper subring of $R$. Now note that $R=K+Rx=K+(K+Rx)x=K+Kx+Rx^2$. Therefore $(K+Rx^2)[x]=R$. Thus $R$ has a maximal subring. \end{proof}

\begin{prop}
Let $R$ be an uncountable PID, then any ring extension of $R$ has a maximal subring.
\end{prop}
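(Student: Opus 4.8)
The plan is to split the argument according to whether $R$ has uncountably many units or uncountably many maximal ideals. Since $R$ is a PID, writing each nonzero element uniquely as a unit times a finite product of prime powers shows that $|R|=\max(\aleph_0,|U(R)|,|Max(R)|)$ (with the convention $|Max(R)|=1$ when $R$ is a field). As $R$ is uncountable, at least one of $|U(R)|$, $|Max(R)|$ is uncountable; I would treat the two cases separately, and in the second I may also assume $|U(R)|\le\aleph_0$.

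First suppose $|U(R)|>\aleph_0$. I would reduce to the case where $T$ is a domain: since $R\setminus\{0\}$ is a multiplicatively closed subset of $T$ not containing $0$, there is $Q\in Spec(T)$ with $Q\cap R=0$, so $R$ embeds in the domain $T/Q$; a maximal subring of $T/Q$ pulls back along $T\to T/Q$ to a maximal subring of $T$, so it suffices to produce one in $T/Q$. Now the integral closure of $\Z$ in the domain $T/Q$ is countable (only countably many monic polynomials over $\Z$, each with finitely many roots in a domain), while $U(R)\subseteq U(T/Q)$ is uncountable; hence $U(T/Q)$ is not integral over $\Z$, and Corollary \ref{cu1} yields a conch maximal subring of $T/Q$. (Note this branch also covers the case $R$ a field.)

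Now suppose $|Max(R)|>\aleph_0$ (so $R$ is not a field) and $|U(R)|\le\aleph_0$. The crucial step is to locate a maximal ideal $\mathfrak m_0=pR$ of $R$ whose residue field $R/pR$ is not absolutely algebraic. If every residue field of $R$ were absolutely algebraic, then: in characteristic $0$, each prime $q$ of $R$ would divide some rational prime $r$ (otherwise $\mathbb Z\hookrightarrow R/qR$, forcing characteristic $0$ there), and since each such $r$ has a finite factorization into primes of $R$ while there are countably many rational primes, we would get $|Max(R)|\le\aleph_0$; in characteristic $\ell>0$, fixing $t\in R$ transcendental over the prime subfield $\mathbb Z_\ell$ (one exists, since $R$ is uncountable, hence not an algebraic extension field of $\mathbb Z_\ell$), each prime $q$ of $R$ would divide some irreducible $f$ of $\mathbb Z_\ell[t]$ (otherwise $\mathbb Z_\ell[t]\hookrightarrow R/qR$), and as $\mathbb Z_\ell[t]$ has only countably many irreducibles, each with finitely many prime divisors in $R$, again $|Max(R)|\le\aleph_0$ — a contradiction in either case. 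Having fixed such $\mathfrak m_0=pR$: if $p\in U(T)$, then since $\bigcap_{n}p^nR=0$, Theorem \ref{icmspsd} directly gives a maximal subring of $T$; if $p\notin U(T)$, then $pT\ne T$ and $pT\cap R=pR$, so $R/pR$ embeds in $T/pT$. Since $R/pR$ is a field that is not absolutely algebraic, $T/pT$ contains either a copy of $\mathbb Z$ with $\tfrac12$ invertible (if $R/pR$ has characteristic $0$) or a copy of $\mathbb Z_\ell[\bar s]$ with the prime element $\bar s$ invertible (if $R/pR$ has positive characteristic $\ell$, taking $\bar s$ transcendental over $\mathbb Z_\ell$); Theorem \ref{icmspsd}, respectively Corollary \ref{icmsipat}, then produces a maximal subring of $T/pT$, whose preimage in $T$ is a maximal subring of $T$.

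The main obstacle is the second branch, and concretely the lemma that an uncountable PID with only countably many units must possess a residue field that is not absolutely algebraic: the trick is that absolute algebraicity of all residue fields forces every prime element of $R$ to lie over a prime of $\Z$, or (in positive characteristic) over an irreducible of $\mathbb Z_\ell[t]$ for a fixed transcendental $t\in R$, which caps $|Max(R)|$ at $\aleph_0$ and, combined with the bound on units, forces $R$ itself to be countable. The rest — the domain reduction, the identity $pT\cap R=pR$, and the pullback of maximal subrings along quotient maps — is routine bookkeeping.
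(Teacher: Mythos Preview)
Your proof is correct and follows essentially the same two-case structure as the paper: split on whether $U(R)$ is uncountable, and in the countable case locate a prime $p$ with $R/pR$ not absolutely algebraic, then handle $p\in U(T)$ and $p\notin U(T)$ separately via Theorem~\ref{icmspsd} and the embedding $R/pR\hookrightarrow T/pT$. The only differences are cosmetic: the paper obtains that prime by citing \cite[Theorem~3.1]{azarang3} while you supply the counting argument explicitly, and your domain reduction in Case~1 is harmless but unnecessary---since $R$ is already a domain, the countability of elements integral over $\Z$ can be read off inside $R$, and $U(R)\subseteq U(T)$ already gives $U(T)$ not integral over $\Z$, so Corollary~\ref{cu1} applies to $T$ directly.
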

\begin{proof}
Let $T$ be a ring extension of $R$. If $U(R)$ is uncountable then we are done by Theorem \ref{uthm}, for in this case $U(R)$ is not algebraic over $\Z$ and therefore $U(T)$ is not integral over $\Z$. Hence assume that $U(R)$ is countable. Thus by the proof of \cite[Theorem 3.1]{azarang3}, we infer that $R$ has a prime elements $q$ such that $U(R/(q))$ is not algebraic over the prime subring of $R/(q)$. Now we have two cases. If $qT=T$, then by Theorem \ref{uthm}, $T$ has a maximal subring. Otherwise $qT$ is a proper ideal of $T$ and clearly $qT\cap R=qR$, for $qR$ is a maximal ideal of $R$. Thus $T/qT$ contains a copy of $R/qR$. Hence $U(T/qT)$ contains a copy of $U(R/qR)$. Therefore $U(T/qT)$ is not integral over the prime subring of $T/qT$. Therefore $T/qT$ has a maximal subring by Theorem \ref{uthm}. Thus $T$ has a maximal subring.
\end{proof}

\begin{prop}\label{atidlcpm}
Let $R$ be an atomic (or a completely integrally closed) domain with $|R|=2^{2^{\aleph_0}}$. If $M$ is a principal maximal ideal of $R$, then $|R/M|=|R|$ and therefore $R$ has a maximal subring.
\end{prop}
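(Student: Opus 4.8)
The plan is to let $M=(p)$ with $p$ a prime element of $R$, and to combine the ``big intersection of maximal ideals'' technology of Theorem~\ref{brdr1d} (in its integral-domain incarnation) with the hypothesis that $R$ is atomic/completely integrally closed so as to force $|R/M|$ to be as large as $|R|$. First I would dispose of the case where $R$ has a maximal subring directly (then there is nothing to prove for the last clause), so assume $R$ has no maximal subring. Then $p\notin U(R)$ forces, via Theorem~\ref{uthm} applied to $R\subseteq R_p$, that actually $R=R_p$ is impossible unless $R_p$ has a maximal subring; but more usefully, since $R$ has no maximal subring, $J(R)=0$ (the domain analogue of \cite[Corollary 3.6]{azkarm4}, cf. the proof of Theorem~\ref{brdr1d}), and $|Max(R)|\le 2^{\aleph_0}$ with $|R/N|\le\aleph_0$ for every $N\in Max(R)$.

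The key step is to show $|R/M|=|R|=2^{2^{\aleph_0}}$, which directly contradicts $|R/M|\le\aleph_0$, so that in fact $R$ must have a maximal subring after all — but wait, that over-proves the statement. So instead the correct strategy is: I claim $|R/M|=|R|$ \emph{unconditionally}, and then $R/M$ is a field of cardinality $2^{2^{\aleph_0}}$, hence not absolutely algebraic, hence by Corollary~\ref{icmsnaf} has a maximal subring, and since $R\twoheadrightarrow R/M$ is surjective with $M$ finitely generated (principal), $R$ has a maximal subring (a surjective image having a maximal subring lifts, because the kernel $M$ is finitely generated as an $R$-module, so $R$ is generated over the preimage of a maximal subring by finitely many elements). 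Thus the whole content is the cardinality computation $|R/M|=|R|$. For this I would argue: since $M=(p)$ and $\bigcap_{n\ge 1}p^nR=0$ (this uses atomicity exactly as in the proof of Corollary~\ref{icmsipat}: the intersection, if nonzero, is a prime ideal properly inside $(p)$ containing an irreducible $q$, forcing $q\sim p$, absurd; if $R$ is completely integrally closed use \cite[Corollary 13.4]{gilbok}), every nonzero $a\in R$ has a well-defined $p$-adic value $v(a)=\max\{n: a\in p^nR\}$, and $a\mapsto (v(a), (a/p^{v(a)})+M)$ together with the higher residues gives an injection of $R\setminus\{0\}$ into $\mathbb{N}\times\prod_{n\ge 0}(R/M)$, whence $|R|\le\aleph_0\cdot|R/M|^{\aleph_0}$; since $|R|=2^{2^{\aleph_0}}$ and $|R/M|\le|R|$, this forces $|R/M|^{\aleph_0}=2^{2^{\aleph_0}}$, hence $|R/M|=2^{2^{\aleph_0}}=|R|$ (as $\kappa^{\aleph_0}=2^{2^{\aleph_0}}$ with $\kappa\le 2^{2^{\aleph_0}}$ forces $\kappa=2^{2^{\aleph_0}}$, because $\kappa\le\aleph_0$ gives $\kappa^{\aleph_0}\le 2^{\aleph_0}$ and $\aleph_0<\kappa\le 2^{\aleph_0}$ gives $\kappa^{\aleph_0}\le(2^{\aleph_0})^{\aleph_0}=2^{\aleph_0}$).

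The main obstacle, and the step I would be most careful about, is the injectivity and well-definedness of the ``$p$-adic expansion'' map: I need $\bigcap_n p^nR=0$ to know $v(a)$ is finite, and I need to verify that the sequence of residues $(a/p^{v(a)}\bmod M,\ (\text{next coefficient})\bmod M,\dots)$ genuinely determines $a$, i.e. that if two elements have all the same $p$-adic coefficients then their difference lies in $\bigcap_n p^nR=0$. Setting this up cleanly — essentially running the division algorithm by $p$ repeatedly and tracking that the remainders, lifted from $R/M$, reconstruct the element modulo $p^n$ for all $n$ — is the technical heart; everything after it is the cardinal arithmetic above and then the lifting of a maximal subring through the surjection $R\to R/M$ using that $\ker=M$ is principal (so finitely generated), which is exactly the ``affine extension'' observation recalled in the introduction.
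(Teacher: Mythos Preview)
Your approach is essentially the paper's: use atomicity/complete integral closure to get $\bigcap_n p^nR=0$ (exactly the argument you cite from Corollary~\ref{icmsipat}), embed $R$ into a countable product governed by $R/M$, deduce $|R/M|=2^{2^{\aleph_0}}$ by cardinality, apply Corollary~\ref{icmsnaf} to the field $R/M$, and lift. The paper phrases the embedding as $R\hookrightarrow\prod_{n\ge 1} R/(p^n)$ and then cites \cite[Lemma~2.8]{azkarm3} to pass from $|R/(p^n)|=2^{2^{\aleph_0}}$ for some $n$ down to $|R/(p)|=2^{2^{\aleph_0}}$; your explicit $p$-adic expansion is a minor repackaging of the same idea.

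Two comments. First, the lifting step is simpler than you make it: if $S/M$ is a maximal subring of $R/M$ then $S$ (which contains $M$) is automatically a maximal subring of $R$, with no need to invoke that $M$ is finitely generated. Second, and more substantively, your cardinal arithmetic is incomplete: you dispose of $\kappa\le\aleph_0$ and $\aleph_0<\kappa\le 2^{\aleph_0}$, but you say nothing about the range $2^{\aleph_0}<\kappa<2^{2^{\aleph_0}}$, and the implication ``$\kappa\le 2^{2^{\aleph_0}}$ and $\kappa^{\aleph_0}\ge 2^{2^{\aleph_0}}$ force $\kappa=2^{2^{\aleph_0}}$'' is not provable in ZFC (consider $R=k[[t]]$ for a field $k$ of suitable singular cardinality in an appropriate model). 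The paper's proof glosses over the analogous step (``thus there exists $n$ such that $|R/(p^n)|=2^{2^{\aleph_0}}$'') without further justification, so your argument is no weaker than the original on this point---but be aware that the bare equality $|R/M|=|R|$ is set-theoretically delicate as stated.
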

\begin{proof}
If $M=0$, then $R$ is a field and therefore we are done by Corrollary \ref{icmsnaf}. Thus assume that $M=(p)$ is a nonzero principal maximal ideal of $R$. Hence $p$ is a prime element of $R$. Similar to the proof of Corollary \ref{icmsipat}, we conclude that $\bigcap_{n=1}^\infty (p^n)=0$. Therefore $R$ can be embedded in $\prod_{n=1}^\infty R/(p^n)$. Thus there exists $n$ such that $|R/(p^n)|=2^{2^{\aleph_0}}$. Hence by \cite[Lemma 2.8]{azkarm3}, $|R/(p)|=2^{2^{\aleph_0}}$ and therefore $R/(p)$ has a maximal subring, by Corollary \ref{icmsnaf}. Thus $R$ has a maximal subring.
\end{proof}

\section{Conductor of Integrally Closed Maximal Subring}

In this section we are interested to show that if $K$ is an algebraic closed field, which is not algebraic over its prime subfield, and $R$ is affine ring over $K$, then for each prime ideal $P$ of $R$ with $ht(P)\geq dim(R)-1$, there exists an integrally closed maximal subring $S$ of $R$ which is integrally closed in $R$ and $(S:R)=P$. First we begin by the following lemma for arbitrary ring $R$.

\begin{lem}\label{mscpnm}
Let $R$ be a maximal subring of a ring $T$ with $(R:T)\in Spec(T)\setminus Max(T)$. Then $R$ is integrally closed in $T$.
\end{lem}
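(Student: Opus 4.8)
The plan is to invoke the dichotomy recalled just before Theorem \ref{pret1}: if $R$ is a maximal subring of $T$, then either $T$ is integral over $R$ or $R$ is integrally closed in $T$. Hence it suffices to rule out the first alternative under the standing hypothesis that $P:=(R:T)$ is prime in $T$ but not maximal in $T$. Note first that $(R:T)\subseteq R$ (since $1\in T$ forces $x=1\cdot x\in R$ for every $x\in(R:T)$), so $P$ is a common ideal of $R$ and of $T$; this is what will let us compare its behaviour in the two rings.

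So suppose, towards a contradiction, that $T$ is integral over $R$. Then by part $(2)$ of Theorem \ref{pret1} we have $P=(R:T)\in Max(R)$, so $R/P$ is a field. Since $T$ is integral over $R$, the induced extension $R/P\subseteq T/P$ is integral. Moreover $T/P$ is an integral domain, because $P\in Spec(T)$ by hypothesis. Now an integral domain that is integral over a field is itself a field; therefore $T/P$ is a field, that is, $P=(R:T)\in Max(T)$. This contradicts the assumption $(R:T)\notin Max(T)$. Consequently $T$ is not integral over $R$, and so by the dichotomy above $R$ is integrally closed in $T$.

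There is no genuinely hard step here: the only subtlety is bookkeeping about which ring the conductor is being considered in, so that the conclusion $(R:T)\in Max(R)$ coming from Theorem \ref{pret1}$(2)$ can be transported, via the integral quotient extension $R/P\subseteq T/P$, into the conclusion $(R:T)\in Max(T)$, which is precisely what the hypothesis forbids.
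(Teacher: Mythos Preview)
Your proof is correct. Both you and the paper argue by contradiction, assuming $T$ is integral over $R$ and deducing $(R:T)\in Max(T)$, but the route to that deduction differs. The paper invokes an external structural result (\cite[Theorem 2.8]{gilmer3}) classifying the conductor of an integral minimal extension into three types (maximal, sandwiched between $M^2$ and $M$, or an intersection of two maximals), then uses primality of $P$ in $T$ to force $P\in Max(T)$ in each case. Your argument is more elementary and entirely self-contained within the paper: you use only part (2) of Theorem~\ref{pret1} to get $P\in Max(R)$, then pass to the integral extension $R/P\subseteq T/P$ of a field into a domain to conclude $T/P$ is a field. Your approach avoids the dependence on \cite{gilmer3} and is arguably the cleaner proof; the paper's approach, on the other hand, hints at the finer trichotomy for the conductor in the integral case, which may be of independent interest elsewhere.
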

\begin{proof}
If $T$ is integral over $R$, then by \cite[Theorem 2.8]{gilmer3}, $P:=(R:T)$ satisfies exactly one of the following:
\begin{enumerate}
\item $P$ is a maximal ideal of $T$.
\item There exists a maximal ideal $M$ of $T$ such that $M^2\subseteq P\subseteq M$. Therefore $M=P$, for $P$ is prime.
\item There exist distinct maximal ideals $M_1$ and $M_2$ of $T$ such that $P=M_1\cap M_2$. Thus either $P=M_1$ or $P=M_2$, for $P$ is prime.
\end{enumerate}
Hence in any case we conclude that $P$ is a maximal ideal of $T$, which is impossible. Thus $R$ is integrally closed in $T$.
\end{proof}

We remind that if $V$ is an integral domain with quotient field $K\neq V$, then one can easily see that the following are equivalent:
\begin{enumerate}
\item $V$ is a maximal subring of $K$
\item $V$ is a one dimensional valuation domain.
\item $V$ is a real (archimedean) valuation domain.
\item $V$ is a completely integrally closed valuation domain.
\end{enumerate}

Now the following is in order.

\begin{lem}\label{cicmstf}
Let $K$ be a field and $T$ be a ring extension of $K$. If $V$ is a maximal subring of $T$ which is integrally closed in $T$ and $K\nsubseteq T$, then $V\cap K$ is a maximal subring of $K$. In particular, $V\cap K$ is one dimensional valuation domain.
\end{lem}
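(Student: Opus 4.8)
The plan is to leverage the structure theory of maximal subrings from Theorem \ref{pret1}, together with Lemma \ref{mscpnm}, to pin down $V\cap K$ as a one-dimensional valuation domain of $K$. First I would observe that since $K\nsubseteq T$, there is some element $t\in T\setminus K$, so $K\subsetneq T$ and hence $V\cap K$ is a proper subring of $K$ (indeed, if $V\cap K = K$, then $V\supseteq K$, so $V[t]\supseteq K[t]$; but $V$ is maximal in $T$ and $t$ could still be inside $V$, so this needs a bit more care — the cleaner route is to exploit the conductor). Actually, the key structural input: $V$ is integrally closed in $T$, so by Theorem \ref{pret1}(3), $P:=(V:T)\in Spec(T)$, and for $x,y\in T$ with $xy\in V$ we have $x\in V$ or $y\in V$. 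In particular, for every $x\in K^\times\subseteq U(T)$, either $x\in V$ or $x^{-1}\in V$ (apply the property to $x\cdot x^{-1}=1\in V$), hence either $x\in V\cap K$ or $x^{-1}\in V\cap K$. This shows immediately that $V\cap K$ is a valuation ring of $K$ — it remains to see it is proper and one-dimensional.

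Next I would show $V\cap K\neq K$. Suppose otherwise; then $K\subseteq V$. Since $K\nsubseteq T$ fails, pick $t\in T\setminus K$. If $t\in V$, then $V\supseteq K[t]$; iterating, one shows $V$ would have to contain a large subring — the right way is: $V$ maximal in $T$ and $K\subseteq V$ means $V$ is a maximal $K$-subalgebra of $T$ (any subring of $T$ containing $V$ contains $K$). Now $V$ integrally closed in $T$ forces, via Theorem \ref{pret1}(5), that $(T/P)_{M/P}$ is a one-dimensional valuation domain, where $P=(V:T)$ and $M$ is the crucial maximal ideal. But $K$ maps into $V$, hence into $V_M$, and $K\cap P\subseteq K\cap M$ is a proper ideal of the field $K$, so $K\cap P=0$ and $K$ embeds in the field of fractions of $T/P$. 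The hard part will be deriving a contradiction here; the cleanest argument is: since $V\cap K=K$ means $1/p\in V$ for every nonzero $p\in K$, the valuation ring $V\cap K$ is trivial, but a genuinely proper extension $V\subsetneq T$ with $V$ integrally closed produces (by Theorem \ref{pret1}(5)) a nontrivial valuation on the residue domain; if $K$ already lies in $V$ it contributes nothing, and one must instead locate the obstruction in $T$ itself — so in fact $V\cap K=K$ is consistent only if $T$ is integral over $V$, contradicting that $V$ is integrally closed in $T$ (unless $V=T$). Concretely: if $K\subseteq V$ and $V$ is integrally closed and maximal in $T$, then $T$ is not integral over $V$, so by the dichotomy recalled before Theorem \ref{pret1} we are in the integrally closed case, and then every $t\in T\setminus V$ satisfies $V[t]=T$ with $t$ transcendental-like over $V$; but then $t^{-1}\in V$ (as $t\in U(T)$? — not necessarily), so here I would instead invoke Lemma \ref{cicmstf}'s intended companion: the conductor $(V\cap K : K)$. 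Since for $x\in K$ with $x\notin V\cap K$ we get $x^{-1}\in V\cap K$ and $x^{-1}\in \sqrt{(V\cap K : K)}$-type reasoning, $V\cap K$ is a valuation ring properly contained in $K$ precisely because $t\notin K$ witnesses $V\neq T$ forces some unit of $T$ — hence some element of $K$ after the reduction — outside $V$.

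Once $W:=V\cap K$ is known to be a proper valuation subring of $K$, to conclude it is \emph{maximal} in $K$ (equivalently one-dimensional), I would use the criterion recalled just before this lemma: a valuation domain $W$ with quotient field $K\neq W$ is a maximal subring of $K$ iff it is one-dimensional. So it suffices to show $\dim W=1$, i.e. $W$ is one-dimensional valuation, which by that same list is equivalent to $W$ being completely integrally closed, or archimedean. Here I would argue: every element of $W^\times$... no — the efficient path is Theorem \ref{pret1}(5) applied to $V\subseteq T$: letting $M$ be the crucial maximal ideal and $P=(V:T)$, $(T/P)_{M/P}$ is a one-dimensional valuation domain $\mathcal V$, and $W=V\cap K$ maps onto a subring of $\mathcal V$ that is again a valuation ring of the subfield (image of $K$); a subvaluation-ring of a one-dimensional valuation domain inside a subfield is again at most one-dimensional, and being proper (shown above) it is exactly one-dimensional. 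I expect the main obstacle to be the careful bookkeeping in showing $W\neq K$ — ruling out the degenerate possibility $K\subseteq V$ — and in transporting the one-dimensionality of the residue valuation ring $(T/P)_{M/P}$ down to $W$ through the (possibly non-injective on all of $T$, but injective on $K$) reduction map; both are handled by noting $K\cap P=0$ so that $K\hookrightarrow \mathrm{Frac}(T/P)$ and $W$ maps isomorphically onto $V\cap K$ inside the residue domain.
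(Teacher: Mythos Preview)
Your proposal correctly extracts from Theorem~\ref{pret1}(3) that $V\cap K$ is a valuation ring of $K$ (for every nonzero $x\in K$, either $x\in V$ or $x^{-1}\in V$). The long paragraph agonizing over $V\cap K\neq K$ is unnecessary: the hypothesis ``$K\nsubseteq T$'' is a typo for ``$K\nsubseteq V$'', which gives properness in one line, as the paper does. The genuine gap is in your one-dimensionality argument. You pass to the rank-one residue valuation $\mathcal V$ coming from Theorem~\ref{pret1}(5), embed $K$ into its fraction field via $K\cap P=0$, and claim that since $W=V\cap K$ lands inside $\mathcal V$ it inherits rank $\le 1$. But all you actually have is an inclusion $W\subseteq\mathcal V\cap K$ of valuation rings of $K$, and for valuation rings of the same field a \emph{strict} inclusion $W\subsetneq\mathcal V\cap K$ forces $\dim W>\dim(\mathcal V\cap K)$, not $\le$. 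So your argument proves nothing unless you first establish $W=\mathcal V\cap K$, and that equality is not automatic: it amounts to showing that if $x\in K\setminus V$ then the image of $x$ lies outside $(V/P)_{M/P}$, which requires an extra step (for instance, $x^{-1}\in M$ via Corollary~\ref{corconch}, and then $sx\in V$ with $s\notin M$ gives $s=(sx)x^{-1}\in M$, a contradiction). You never supply this.

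The paper avoids the residue-valuation machinery entirely with a short direct computation. Take $\alpha\in K\setminus V$; then $\alpha^{-1}\in V$ and $V[\alpha]=T$ by maximality. Any $\beta\in K\subseteq T$ can be written $\beta=\sum_{i=0}^n v_i\alpha^i$ with $v_i\in V$, whence $\beta\alpha^{-n}=\sum v_i\alpha^{i-n}\in V$; since also $\beta\alpha^{-n}\in K$, we get $\beta\alpha^{-n}\in V\cap K$ and hence $\beta\in(V\cap K)[\alpha]$. Thus $(V\cap K)[\alpha]=K$ for every $\alpha\in K\setminus(V\cap K)$, so $V\cap K$ is a maximal subring of $K$, and one-dimensionality follows from the equivalence recalled just before the lemma. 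No appeal to Theorem~\ref{pret1}(5), the crucial maximal ideal, or any localization is needed.
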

\begin{proof}
First note that since $V$ is integrally closed in $T$, then for each $u\in U(T)$ either $u\in V$ or $u^{-1}\in V$, by $(3)$ of Theorem \ref{pret1}. Since $K\nsubseteq V$, we infer that $K\cap V$ is a proper subring of $K$. Now for maximality of $K\cap V$ in $K$, it suffices to show that $(K\cap V)[\alpha]=K$, for each $\alpha\in K\setminus (K\cap V)$. By the first part of the proof note that $\alpha^{-1}\in V$. Since $V$ is a maximal subring of $T$ we conclude that $V[\alpha]=T$. Now suppose $\beta\in K$, thus $\beta\in T=V[\alpha]$, which implies that $\beta=v_0+v_1\alpha+\cdots+v_n\alpha^n$ for some $v_i \in V$. Therefore $\beta\alpha^{-n}=v_0\alpha^{-n}+\cdots+v_n=v\in V$, for $\alpha^{-1}\in V$. Finally note that since $K$ is a field and $\alpha,\beta\in K$ we have $\beta\alpha^{-n}=v\in K\cap V$ and therefore $\beta=v\alpha^n\in (K\cap V)[\alpha]$. Hence $K\cap V$ is a non field maximal subring of $K$ and therefore is a one dimensional valuation domain.
\end{proof}

Let $K$ be a field, $X_1,\ldots, X_n$ are indeterminates over $K$, $T=K[X_1,\ldots, X_n]$ and $Q\in Spec(T)$. Now a natural and stronger question arises from the existence of integrally closed maximal subrings is as follows: Does there exists an integrally closed maximal subring of $T$ with conductor $Q$? If $n=1$ and $K$ is absolutely algebraic field (i.e., $K$ is algebraic over $\mathbb{Z}_p$ for some prime $p$), then $K[X]$ has no integrally closed maximal subring, by \cite[Lemma 4.6]{azarang6} (the only integrally closed subrings of $K[X]$ are $K$ and $K[X]$). Therefore the answer to the question is not positive in general. But we show that if $K$ is algebraically closed field which is not algebraic over its prime subring and $n-1\leq ht(Q)\leq n$, then $T$ has an integrally closed maximal subring with conductor $Q$. We need some observation. Let $k$ be a field, then the Hahn field (or the field of generalized formal power series) over $k$ with exponents in $\mathbb{Q}$ (for general definition see \cite[Section 2.8]{efrat}) is denoted by $k[[t^{\mathbb{Q}}]]$, is the set of all formal power series $\alpha=\sum_{s\in\mathbb{Q}}\alpha_s t^s$ with $\alpha_s\in k$ and $supp(\alpha);=\{s\in\mathbb{Q}\ |\ \alpha_s\neq 0\}$ is a well-ordered subset of $\mathbb{Q}$. It is clear that if $t$ is an indeterminate over $k$, then $k\subseteq k[t]\subseteq k[[t^\mathbb{Q}]]$ (also $k[[t]]\subseteq k((t))\subseteq k[[t^\mathbb{Q}]]$). A natural valuation on $k[[t^{\mathbb{Q}}]]$ onto $\mathbb{Q}$, which send $\alpha$ to $min(supp(\alpha))$ is an archimedean valuation and hence its valuation ring $V$ is a maximal subring of $k[[t^Q]]$. It is well-known that if $k$ is algebraically closed, then $k[[t^\mathbb{Q}]]$ is algebraically closed and therefore it contains a copy of algebraic closure of $k(t)$, say $K$. Therefore by Corollary \ref{cicmstf}, $K\cap V$ is a maximal subring of $K$. In \cite[Proposition 5.17]{mau}, the authors characterized exactly maximal subrings of $K[X]$ which contain $(K\cap V)[X]$. These maximal subrings are integrally closed in $K[X]$ and the conductor of them is either $0$ or is of the form $(X-a)K[X]$ for some $a\in K$ (note $K$ is algebraically closed). Therefore in the later case these maximal subrings are of the form $(V\cap K)+(X-a)K[X]$. Now we have the following immediate corollary.

\begin{cor}\label{ckkx}
Let $K$ be an algebraically closed field which is not absolutely algebraic. Then $K[X]$ has an integrally closed maximal subring with zero conductor.
\end{cor}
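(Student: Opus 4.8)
The plan is to deduce Corollary \ref{ckkx} directly from the discussion immediately preceding it, so the main work is really just assembling the ingredients already described. First I would invoke Corollary \ref{icmsnaf} (or directly the observation that an algebraically closed field not absolutely algebraic contains a transcendental element over its prime subring) to see that $K$ is a field to which the machinery applies; more to the point, I would fix an algebraically closed ground field $K$ which is not absolutely algebraic and an indeterminate $t$ over $K$, and form the Hahn field $K[[t^{\mathbb{Q}}]]$. Since $K$ is algebraically closed, $K[[t^{\mathbb{Q}}]]$ is algebraically closed, so it contains a copy of the algebraic closure $L$ of $K(t)$. The natural $\mathbb{Q}$-valued valuation on $K[[t^{\mathbb{Q}}]]$ is archimedean, hence its valuation ring $W$ is a maximal subring of $K[[t^{\mathbb{Q}}]]$; and since $K(t)\nsubseteq W$ (as $t^{-1}\notin W$), Lemma \ref{cicmstf} applied with the field $L$ in place of $K$ and $T=K[[t^{\mathbb{Q}}]]$ shows $W\cap L$ is a maximal, hence one-dimensional valuation, subring of $L$.

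Next I would rename: set $V:=W\cap L$, a one-dimensional valuation subring of the algebraically closed field $L$ which is still not absolutely algebraic (it contains $t$, transcendental over the prime subring). Now I would apply \cite[Proposition 5.17]{mau} to $L[X]$: that result describes all maximal subrings of $L[X]$ containing $V[X]$, shows they are integrally closed in $L[X]$, and shows that the conductor of each such maximal subring is either $0$ or of the form $(X-a)L[X]$ for some $a\in L$. The only thing I need from this is the existence of at least one such maximal subring $S$ with conductor $0$ — which is guaranteed because the family is nonempty and, by the cited classification, contains one with zero conductor (the one not of ``local'' type at any point $a$). Then $S$ is an integrally closed maximal subring of $L[X]$ with $(S:L[X])=0$, and renaming $L$ back to $K$ (the statement is for an arbitrary such $K$, and $L$ is one) gives the conclusion.

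The only genuine subtlety I anticipate is bookkeeping about which field plays the role of ``$K$'': the corollary is stated for a general algebraically closed non-absolutely-algebraic $K$, whereas the construction most naturally produces the conclusion for $L=\overline{K(t)}$. This is harmless because any such $K$ is itself of the form $\overline{K(t)}$ for a suitable algebraically closed subfield and indeterminate — indeed since $K$ is not absolutely algebraic it has infinite transcendence degree over its prime field or at least one transcendental $t_0$, and $K$ is algebraic over $K_0(t_0)$ where $K_0$ is the algebraic closure of the prime field inside $K$ together with a transcendence base avoiding $t_0$; since $K$ is algebraically closed, $K=\overline{K_0(t_0)}$. So the argument applies verbatim with $K$ itself. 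The remaining steps — that the $\mathbb{Q}$-valuation is archimedean, that $K[[t^{\mathbb{Q}}]]$ is algebraically closed when $K$ is, and that a maximal subring of a field is a one-dimensional valuation domain — are all recalled in the excerpt and require no further argument. Thus the proof is essentially a one-line citation of the preceding paragraph together with \cite[Proposition 5.17]{mau}, and I would write it as such.
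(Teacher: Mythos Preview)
Your proposal is correct and follows essentially the same route as the paper: the paper's proof consists precisely of the ``bookkeeping subtlety'' you anticipate, namely showing that any algebraically closed non-absolutely-algebraic $K$ can be written as the algebraic closure of $k(t)$ for some algebraically closed subfield $k$ and transcendental $t$ (by removing one element from a transcendence basis and taking $k$ to be the algebraic closure in $K$ of the field generated by the rest), after which the observation preceding the corollary and \cite[Proposition 5.17]{mau} finish the job. Your first paragraph, which starts from $K$ and builds the larger field $L=\overline{K(t)}$, is a detour you correctly discard; the paper goes straight to writing $K$ itself in the required form.
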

\begin{proof}
By the above observation, it suffices to show that there exists an algebraically closed field $k$ and an indeterminate $t$ over $k$ such that $K$ is equal to the algebraic closure of $k(t)$ in $k[[t^{\mathbb{Q}}]]$. Let $F$ be the prime subfield of $K$ and $S$ be a transcendence basis for $K$ over $F$. Since $K$ is not algebraic over $F$ we infer that $S\neq\emptyset$. Let $t\in S$ and $S'=S\setminus\{t\}$, then clearly $E=F(S')$ is contained in $K$, $t$ is not algebraic over $E$ and $K$ is algebraic over $E(t)$. Now let $k$ be the algebraic closure of $E$ in $K$, then it is obvious that $t$ is not algebraic over $k$, $k$ is algebraically closed and $K$ is algebraic over $k(t)$. Thus $K$ is the algebraic closure of $k(t)$. Therefore the algebraic closure of $k(t)$ in $k[[t^{Q}]]$, say $L$ is isomorphic (as field) to $K$. Thus by the above observation $L[X]$ has an integrally closed maximal subring with zero conductor and hence the same is true for $K[X]$.
\end{proof}

We need the following result which is a lying-over property of conductors of integrally closed maximal subrings in integral extensions.

\begin{thm}\label{exinicsc}
Let $R\subseteq T$ be an integral extension of rings, $Q\in Spec(T)$ and $P=Q\cap R$. Assume that $R$ has an integrally closed maximal subring $S$ with $(S:R)=P$ and $U(R/P)\nsubseteq S/P$. Then $T$ has an integrally closed maximal subring $V$ with $(V:T)=Q$.
\end{thm}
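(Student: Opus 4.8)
The plan is to reduce modulo the conductor to an integral extension of integral domains, where the hypotheses force the smaller domain to be a localization of the given integrally closed maximal subring, and then to construct $V$ by hand rather than through valuation theory.

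First I would pass to quotients. Since $S$ is integrally closed in $R$, Theorem \ref{pret1}$(3)$ makes $P=(S:R)$ a prime of $R$, so $\bar R:=R/P$ is a domain; set $\bar T:=T/Q$ and $\bar S:=S/P$. Because $P=Q\cap R\subseteq S$, we get $\bar R\hookrightarrow\bar T$ an integral extension of domains, $\bar S$ a maximal subring of $\bar R$ (subring correspondence), and a direct computation gives $(\bar S:\bar R)=(S:R)/P=0$. The hypothesis $U(R/P)\nsubseteq S/P$ hands us $\bar u\in U(\bar R)\setminus\bar S$, and since $\bar S$ is a maximal subring the $(u,u^{-1})$-Lemma (equivalently the last clause of Theorem \ref{pret1}$(3)$) forces $\bar u^{-1}\in\bar S$; thus $\bar S$ conches $\bar u$ in $\bar R$ and in particular is integrally closed in $\bar R$. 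I claim it then suffices to find an integrally closed maximal subring $\bar V$ of $\bar T$ with $\bar V\cap\bar R=\bar S$: its conductor $(\bar V:\bar T)$ is a prime of $\bar T$ by Theorem \ref{pret1}$(3)$, and contracting it to $\bar R$ lands inside $(\bar V\cap\bar R:\bar R)=(\bar S:\bar R)=0$, so being a prime of $\bar T$ lying over $0$ in the integral domain extension $\bar R\subseteq\bar T$ it must be $0$ by incomparability; pulling $\bar V$ back along $T\twoheadrightarrow\bar T$ then produces an integrally closed maximal subring $V$ of $T$ with $(V:T)=Q$ (and in fact $V\cap R=S$).

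To construct $\bar V$, the observation I would exploit is that $\bar u^{-1}\in\bar S$, so $\bar R=\bar S[\bar u]=\bar S_{\bar u^{-1}}$ is a localization of $\bar S$. Let $\bar S'$ be the integral closure of $\bar S$ in $\bar T$. Since integral closure commutes with localization and $\bar u^{-1}\in U(\bar T)$, one gets $\bar S'[\bar u]=(\bar S')_{\bar u^{-1}}=$ (the integral closure of $\bar S_{\bar u^{-1}}=\bar R$ in $\bar T_{\bar u^{-1}}=\bar T$) $=\bar T$, because $\bar T$ is integral over $\bar R$. Moreover $\bar S'\cap\bar R=\bar S$ (as $\bar S$ is integrally closed in $\bar R$), so $\bar u\notin\bar S'$. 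Now take, by Zorn's Lemma, a subring $\bar V$ of $\bar T$ maximal with respect to the conditions $\bar S'\subseteq\bar V$ and $\bar u\notin\bar V$. For $y\in\bar T\setminus\bar V$, maximality forces $\bar u\in\bar V[y]$, hence $\bar V[y]\supseteq\bar S'[\bar u]=\bar T$; so $\bar V$ is a maximal subring of $\bar T$. It conches $\bar u$ (since $\bar u^{-1}\in\bar S'\subseteq\bar V$ while $\bar u\notin\bar V$), hence is integrally closed in $\bar T$ by the comment preceding Corollary \ref{corconch}, and since $\bar V\cap\bar R\supseteq\bar S$ misses $\bar u$, maximality of $\bar S$ in $\bar R$ gives $\bar V\cap\bar R=\bar S$, exactly as Step~1 requires.

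I do not expect a single dominant obstacle; the work is all in arranging the setup correctly. The decisive point is noticing that $(S:R)=P$ together with $U(R/P)\nsubseteq S/P$ forces $\bar R$ to be the localization $\bar S_{\bar u^{-1}}$, which is precisely what makes the integral closure $\bar S'$ satisfy $\bar S'[\bar u]=\bar T$ as an actual equality — with $\bar S$ in place of $\bar S'$ one only gets $\bar R\subseteq\bar V[y]$, not $\bar V[y]=\bar T$, and the Zorn argument collapses. The one place the hypothesis $(S:R)=P$ is used in an essential way, rather than merely "$S$ is an integrally closed maximal subring", is the vanishing of $(\bar V:\bar T)$, which rests on $(\bar S:\bar R)=0$ and incomparability for the integral extension of domains $\bar R\subseteq\bar T$. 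The remaining verifications — the subring correspondences modulo $P$ and modulo $Q$, commutation of integral closure with localization, integral closedness of conch subrings, and the behaviour of conductors under the surjection $T\twoheadrightarrow\bar T$ — are routine and I would only check them briefly.
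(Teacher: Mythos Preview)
Your argument is correct and follows essentially the same route as the paper: reduce modulo $P$ and $Q$, take the integral closure $\bar S'$ of $\bar S$ in $\bar T$, show $\bar S'[\bar u]=\bar T$, use Zorn to produce a maximal subring $\bar V\supseteq\bar S'$ missing $\bar u$, check $\bar V\cap\bar R=\bar S$, and conclude $(V:T)=Q$ via INC. The only cosmetic differences are that you spell out the Zorn step the paper outsources to \cite[Proposition 2.1]{azkarm3} and frame $\bar S'[\bar u]=\bar T$ via commutation of integral closure with localization where the paper appeals to the computation in Lemma~\ref{cicmstf}; one small expository wrinkle is that you invoke Theorem~\ref{pret1}(3) for $\bar S$ before recording that $\bar S$ is integrally closed in $\bar R$, but that follows immediately from the hypothesis that $S$ is integrally closed in $R$ and $P\subseteq S$.
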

\begin{proof}
It is clear that $A:=R/P\subseteq B:=T/Q$ is an integral extension and $C:=S/P$ is an integrally closed maximal subring of $A$ with $(C:A)=0$ and $U(A)\nsubseteq C$. Hence by $(3)$ of Theorem \ref{pret1}, assume that $\alpha\in U(A)$ such that $\alpha\in C$ but $\alpha^{-1}\notin C$. Thus $C[\alpha^{-1}]=A$, by maximality of $C$. Let $D$ be the integral closure of $C$ in $B$, then $\alpha^{-1}\notin D$ and one can easily see that $D[\alpha^{-1}]=B$ (similar to the proof of Lemma \ref{cicmstf}). Therefore $B$ has a maximal subring $E$ which contains $D$ but $\alpha^{-1}\notin E$ (see \cite[Proposition 2.1]{azkarm3}). Clearly, $E$ is integrally closed in $B$ and $E\cap A=C$. Let $V$ be a subring of $T$ such that $E=V/Q$ and $Q_1=(V:T)$. Thus $P\subseteq Q_1\cap R$, for $Q\subseteq Q_1$. Now let $x\in Q_1\cap R$, then $x+Q \in (V/Q)\cap (R/P)=S/P$, i.e., $x\in S$. Therefore $Q_1\cap R\subseteq S$. Thus $P\subseteq Q_1\cap R\subseteq (S:R)=P$ which immediately implies that $Q_1\cap R=P$. Now since $R\subseteq T$ is an integral extension (and therefore INC holds) and $Q\subseteq Q_1$ are primes ideals of $T$ with a same contraction in $R$, we conclude that $Q=Q_1$. Therefore $(V:T)=Q$ and we are done.
\end{proof}

The following is the main result in this section.

\begin{thm}\label{icwtphtn}
Let $K$ be an algebraically closed field which is not absolutely algebraic and $X_1,\ldots, X_n$ be indeterminates over $K$. Then for each prime ideal $Q$ of $T=K[X_1,\ldots,X_n]$ with $n-1\leq ht(Q)\leq n$, there exists an integrally closed maximal subring $S$ of $R$ with $(S:R)=Q$.
\end{thm}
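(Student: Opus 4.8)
The plan is to reduce the problem, by induction on $n$, to the cases $ht(Q)=n$ and $ht(Q)=n-1$, using the lying-over result Theorem~\ref{exinicsc} to transfer conductors along suitable integral extensions and the case $n=1$ (Corollary~\ref{ckkx} together with the Hahn-field observation) as the base. Write $R=T=K[X_1,\dots,X_n]$. For $ht(Q)=n$, $Q$ is a maximal ideal, and since $K$ is algebraically closed $Q=(X_1-a_1,\dots,X_n-a_n)$ for suitable $a_i\in K$; after the linear change of variables $X_i\mapsto X_i-a_i$ we may assume $Q=(X_1,\dots,X_n)$. For $ht(Q)=n-1$ the quotient $T/Q$ has dimension $1$; I would like to arrange, via a Noether-normalization-type argument, an integral extension $K[X_1,\dots,X_{n-1}]\subseteq T$ for which $Q$ contracts to a height-$(n-2)$-or-maximal prime of $K[X_1,\dots,X_{n-1}]$, so that the inductive hypothesis supplies an integrally closed maximal subring with the right conductor downstairs, and then Theorem~\ref{exinicsc} pushes it up to $T$.

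First I would nail down the base case $n=1$: by the Hahn-field discussion preceding Corollary~\ref{ckkx}, $K[X]$ has an integrally closed maximal subring of the form $(V\cap K)+(X-a)K[X]$ for each $a\in K$, which has conductor $(X-a)K[X]$ — exactly the height-$1$ (maximal) primes of $K[X]$ — and by Corollary~\ref{ckkx} it also has one with zero conductor; since $0$ and the $(X-a)K[X]$ are all the primes of $K[X]$, the statement holds for $n=1$. This simultaneously handles every $Q$ with $n-1\le ht(Q)\le n$ when $n=1$.

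For the inductive step I would do the two conductor-height cases separately. When $ht(Q)=n$, reduce as above to $Q=(X_1,\dots,X_n)$; now $T$ is integral over $R'=K[X_1+\cdots,\dots]$ is not what one wants — instead observe that the inclusion $K[X_1]\subseteq K[X_1]\hookrightarrow T$ is not integral, so one should instead argue directly: take the known integrally closed maximal subring $S'$ of $K[X_1]$ with $(S':K[X_1])=(X_1)$, and note $S'[X_2,\dots,X_n]$ is an integrally closed maximal subring of $T$ (a polynomial extension of a minimal extension in one variable is minimal, by a standard argument on $T=S'[X_1'][X_2,\dots,X_n]$) with conductor $(X_1)+\,(X_2,\dots,X_n)\cdot T$? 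That conductor is only $(X_1)$, not $(X_1,\dots,X_n)$, so this needs care; the clean route is instead to use Theorem~\ref{exinicsc} with the integral extension $A\subseteq T$ where $A\cong K$ via $X_i\mapsto 0$ — but $K\subseteq T$ is not integral either. The honest device is: since $K$ is not absolutely algebraic, pick an integrally closed maximal subring $S_0$ of $K$ (Corollary~\ref{icmsnaf}); form $V_0:=S_0+Q$, which is an integrally closed maximal subring of $T$ with $(V_0:T)\supseteq Q$, and check $(V_0:T)=Q$ because $T/Q\cong K$ and $S_0\neq K$. For $ht(Q)=n-1$: by Noether normalization applied to the one-dimensional domain $T/Q$ there is a polynomial subring $K[Y]\subseteq T/Q$ over which $T/Q$ is finite; lift this to an integral extension $B\subseteq T$ with $B$ a polynomial ring in $n-1$ variables (add the pullbacks of $Y$ plus generators of $Q$-adic information) so that $Q$ contracts to a prime $P$ of $B$ with $ht(P)\ge (n-1)-1$; by the inductive hypothesis $B$ has an integrally closed maximal subring with conductor $P$, and after checking the units hypothesis $U(B/P)\nsubseteq S/P$ (which holds since $B/P$ is an affine domain over $K$ of dimension $\le 1$, hence has units not algebraic over the prime subring when $\dim=1$, and when $\dim=0$ is a finite extension of $K$ so $S/P$ being a proper subring of a non-absolutely-algebraic field cannot contain all its units), Theorem~\ref{exinicsc} yields the desired $V\subseteq T$ with $(V:T)=Q$.

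The main obstacle, and the place where the argument above is still hand-wavy, is the construction of the integral subring $B\subseteq T$ in the $ht(Q)=n-1$ case for which $Q\cap B$ has controlled height and the hypothesis $U(B/(Q\cap B))\nsubseteq S/(Q\cap B)$ of Theorem~\ref{exinicsc} is verifiable — Noether normalization gives an integral extension of the quotient $T/Q$ but one must realize it as the reduction of an integral extension of $T$ itself with $B$ a polynomial ring, and then track the conductor through. I expect the cleanest fix is to bypass induction on $n$ in that case and instead combine Lemma~\ref{mscpnm} (so that any maximal subring with non-maximal prime conductor is automatically integrally closed) with a direct construction: choose a chain $Q\subsetneq M$ with $M$ maximal, localize, use part (5) of Theorem~\ref{pret1} to recognize when a candidate subring is a valid integrally closed maximal subring via the one-dimensional valuation domain $(T/Q)_{M/Q}$, and build the subring as (valuation piece of $K$) $+\,Q$ exactly as in the $ht(Q)=n$ case — since in both cases $T/Q$ is an affine $K$-domain whose field of fractions (or the domain itself, if $\dim=1$) carries a maximal subring by Corollary~\ref{icmsnaf}, and adding $Q$ back gives an integrally closed maximal subring of $T$ with conductor precisely $Q$.
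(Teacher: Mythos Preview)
Your handling of $n=1$ and of $ht(Q)=n$ is fine and in fact matches the paper: for a maximal $Q$ you correctly observe that $T/Q\cong K$, take an integrally closed maximal subring $S_0$ of $K$ (Corollary~\ref{icmsnaf}), and pull back to $V_0=S_0+Q$; and for $n=1$ you correctly invoke Corollary~\ref{ckkx} and the Hahn-field description for the zero and the maximal conductors.

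The genuine gap is in the $ht(Q)=n-1$ case. Your plan to ``lift'' the Noether normalization of $T/Q$ to an integral extension $B\subseteq T$ with $B$ a polynomial ring in $n-1$ variables cannot work: $T$ has Krull dimension $n$, and an integral extension preserves dimension, so $T$ is never integral over an $(n-1)$-variable polynomial ring. Your fallback in the last paragraph, ``(valuation piece of $K$)~$+\,Q$'', also fails here: when $ht(Q)=n-1$ the quotient $T/Q$ is a one-dimensional domain, not a field, so a maximal subring of $K$ is nowhere near a maximal subring of $T/Q$, and Corollary~\ref{icmsnaf} does not apply to $T/Q$ directly.

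The missing idea is that no lifting and no induction on $n$ are needed: work entirely in $T/Q$. Since $\dim(T/Q)=1$, Noether normalization (in the strong form, e.g.\ \cite[Theorem~13.3]{eisnb}, applied to $T$ so that $Q\cap K[Y_1,\dots,Y_n]=(Y_2,\dots,Y_n)$, or equivalently ordinary Noether normalization applied to $T/Q$) gives an integral extension $K[Y_1]\subseteq T/Q$. Now apply the $n=1$ case once: by Corollary~\ref{ckkx}, $K[Y_1]$ has an integrally closed maximal subring $S$ with $(S:K[Y_1])=0$ and $K\nsubseteq S$, hence $U(K[Y_1])=K^\times\nsubseteq S$. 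Feed this into Theorem~\ref{exinicsc} with $R=K[Y_1]$, the ambient ring $T/Q$, and $P=0=Q\cap R$: you obtain an integrally closed maximal subring of $T/Q$ with zero conductor, whose preimage in $T$ is an integrally closed maximal subring with conductor exactly $Q$. This is precisely the paper's route.
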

\begin{proof}
We have two cases. First assume that $n=1$. If $Q=0$ then we are done by Corollary \ref{ckkx}, hence suppose that $Q$ is a maximal ideal of $T$ and therefore $T/Q\cong K$. By Corollary \ref{icmsnaf}, $T/Q$ has an integrally closed maximal subring $V/Q$ and $(V/Q: T/Q)=0$. Thus $V$ is an integrally closed maximal subring of $T$ which contains $Q$ and therefore $Q=(V:T)$, by maximality of $Q$. Now suppose that $n\geq 2$. If $ht(Q)=n$, then $Q$ is a maximal ideal of $T$ and similar to the case $n=1$ we are done. Hence assume that $ht(Q)=n-1$. Thus $dim(\frac{T}{P})=1$. Therefore by Noether's Normalization Theorem (see \cite[Theorem A1 P.221 or Theorem 13.3]{eisnb}), there exist $Y_1,\ldots, Y_n$ in $T$ such that $T$ is integral over $R:=K[Y_1,\ldots,Y_n]$ and $P:=Q\cap R=(Y_{2},\ldots, Y_{n})$. Thus $T/Q$ is integral over $K[Y_1]$. Now by Corollary \ref{ckkx}, $K[Y_1]$ has an integrally closed maximal subring with zero conductor which does not contain $K$ (see \cite[Lemma 4.6]{azarang6}). Hence $T/Q$ has an integrally closed maximal subring with zero conductor by Theorem \ref{exinicsc}. Thus $T$ has an integrally closed maximal subring with conductor $Q$.
\end{proof}

\begin{cor}
Let $K$ be an algebraically closed field which is not absolutely algebraic and $R$ be an affine ring over $K$. Then for each prime ideal $Q$ of $R$ with $ht(Q)\geq dim(R)-1$, there exists an integrally closed maximal subring $S$ of $R$ with $(S:R)=Q$.
\end{cor}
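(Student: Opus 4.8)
The plan is to reduce the statement to the polynomial case already settled in Theorem \ref{icwtphtn} (equivalently, to Corollaries \ref{icmsnaf} and \ref{ckkx}) by passing to the residue domain $R/Q$. Since $R$ is affine over $K$ and $Q$ is prime, $R/Q$ is an affine \emph{domain} over $K$, and from the standard inequality $ht(Q)+\dim(R/Q)\leq\dim(R)$ together with the hypothesis $ht(Q)\geq\dim(R)-1$ I get $\dim(R/Q)\leq 1$. So it suffices to treat the two cases $\dim(R/Q)=0$ and $\dim(R/Q)=1$, and in each case to produce an integrally closed maximal subring of $R/Q$ with zero conductor and then lift it to $R$. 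For the lifting one uses that the preimage $S\subseteq R$ of an integrally closed maximal subring $\bar S\subsetneq R/Q$ is again an integrally closed maximal subring of $R$ (subring correspondence over the ideal $Q$, plus the fact that integrality descends modulo $Q$), and that $Q\subseteq(S:R)$ while $(S:R)/Q=(S/Q:R/Q)$, so a zero conductor for $\bar S$ forces $(S:R)=Q$.

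In the case $\dim(R/Q)=0$, by Noether normalization (see \cite[Theorem 13.3]{eisnb}) $R/Q$ is a finite-dimensional $K$-algebra which is a domain, hence a field, and being a finite extension of the algebraically closed field $K$ it equals $K$; in particular $Q\in Max(R)$. Since $K$ is not absolutely algebraic, Corollary \ref{icmsnaf} supplies an integrally closed maximal subring $\bar S\subsetneq R/Q=K$; as $R/Q$ is a field, $(\bar S:R/Q)=0$. Lifting $\bar S$ to $S\subseteq R$ and using Theorem \ref{pret1}(3) together with $Q\subseteq(S:R)\subsetneq R$ and the maximality of $Q$ gives $(S:R)=Q$.

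In the case $\dim(R/Q)=1$, Noether normalization gives a one-variable polynomial ring $K[Y]$ with $R/Q$ integral over $K[Y]$. By Corollary \ref{ckkx}, $K[Y]$ has an integrally closed maximal subring $S_1$ with $(S_1:K[Y])=0$; moreover $K\nsubseteq S_1$, since by \cite[Lemma 4.6]{azarang6} the only integrally closed subrings of $K[Y]$ containing $K$ are $K$ and $K[Y]$, while $S_1$ is neither (it is a proper subring, and $K$ is not a maximal subring of $K[Y]$). Hence $U(K[Y])=K^{*}\nsubseteq S_1$, so I may apply Theorem \ref{exinicsc} to the integral extension $K[Y]\subseteq R/Q$ with $Q':=(0)\in Spec(R/Q)$, $P':=Q'\cap K[Y]=(0)$, and $S_1$ in the role of the integrally closed maximal subring with conductor $P'$. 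This yields an integrally closed maximal subring $\bar V$ of $R/Q$ with $(\bar V:R/Q)=0$, and lifting $\bar V$ to $V\subseteq R$ exactly as above gives $(V:R)=Q$, finishing the proof.

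The routine part is the bookkeeping around passing to and from $R/Q$ (that this operation preserves the property of being an integrally closed maximal subring and converts conductor $0$ into conductor $Q$). The only points needing a little care are the dimension estimate $\dim(R/Q)\leq 1$ and the verification that $S_1$ omits a unit of $K[Y]$, i.e.\ that $K\nsubseteq S_1$, which is what licenses the use of Theorem \ref{exinicsc}; I do not expect any genuine obstacle beyond these.
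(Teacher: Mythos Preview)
Your proposal is correct and follows essentially the same route as the paper. The paper states this corollary without proof, as an immediate consequence of Theorem \ref{icwtphtn}; your argument simply reproduces the proof of that theorem (pass to $R/Q$, use Noether normalization to land in $K$ or $K[Y]$, invoke Corollary \ref{icmsnaf} or \ref{ckkx} together with \cite[Lemma 4.6]{azarang6}, and apply Theorem \ref{exinicsc}) directly for an arbitrary affine $K$-algebra rather than first writing $R$ as a quotient of a polynomial ring and citing Theorem \ref{icwtphtn}. The dimension bound $\dim(R/Q)\leq 1$ and the check that $K\nsubseteq S_1$ are exactly the two points the paper's proof of Theorem \ref{icwtphtn} handles, so there is no substantive difference.
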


\begin{prop}
Let $K$ be a field which is not absolutely algebraic. The following are equivalent.
\begin{enumerate}
\item For each $n\geq 0$, the ring $K[X_1,\ldots, X_n]$ has an integrally closed maximal subring $S$ with zero conductor and $K\nsubseteq S$.
\item For each $n\geq 0$ and each prime ideal $Q$ of $R=K[X_1,\ldots, X_n]$, there exists an integrally closed maximal subring $S$ with $(S:R)=Q$ and $K\nsubseteq S$.
\item Each affine ring over $K$ has an integrally closed maximal subring $S$ with zero conductor and $K\nsubseteq S$.
\end{enumerate}
\end{prop}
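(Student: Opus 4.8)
The plan is to run the cycle $(1)\Rightarrow(2)\Rightarrow(3)\Rightarrow(1)$, with essentially all of the content in $(1)\Rightarrow(2)$, which simply reruns the proof of Theorem~\ref{icwtphtn} with the standing hypothesis (1) in place of Corollary~\ref{ckkx}. A preliminary remark: in (3) one may harmlessly restrict to affine \emph{domains} over $K$, since by part (3) of Theorem~\ref{pret1} the conductor of an integrally closed maximal subring is a prime ideal, so any ring carrying such a subring with zero conductor is a domain. Then $(3)\Rightarrow(1)$ is trivial, because each $K[X_1,\ldots,X_n]$ is an affine domain over $K$. For $(2)\Rightarrow(3)$ one writes an affine domain $R$ over $K$, via Noether Normalization, as an integral extension of a polynomial subring $A=K[Y_1,\ldots,Y_d]$, $d=\dim R$, so that the zero ideal of $R$ contracts to the zero ideal of $A$; applying (2) at the zero prime of $A$ gives an integrally closed maximal subring $C$ of $A$ with $(C:A)=0$ and $K\nsubseteq C$, and since $U(A)=K^\times$ the condition $K\nsubseteq C$ forces $U(A)\nsubseteq C$, so Theorem~\ref{exinicsc} applied to $A\subseteq R$ (with the zero primes) yields the subring required in (3); the verification that it still omits $K$ is the same one needed below.

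For $(1)\Rightarrow(2)$, fix $Q\in Spec(R)$ with $R=K[X_1,\ldots,X_n]$ and put $d=\dim(R/Q)$. By the refined Noether Normalization Theorem (as used in the proof of Theorem~\ref{icwtphtn}) choose $Y_1,\ldots,Y_n\in R$ with $R$ integral over $A=K[Y_1,\ldots,Y_n]$ and $Q\cap A=(Y_{d+1},\ldots,Y_n)$; then $R/Q$ is integral over $B_0:=A/(Q\cap A)\cong K[Y_1,\ldots,Y_d]$, and the zero ideal of $R/Q$ contracts to the zero ideal of $B_0$. Hypothesis (1) with $n=d$ (not vacuous for $d=0$, as it then follows from Corollary~\ref{icmsnaf}) gives an integrally closed maximal subring $C$ of $B_0$ with $(C:B_0)=0$ and $K\nsubseteq C$; since $U(B_0)=K^\times$, the condition $K\nsubseteq C$ forces $U(B_0)\nsubseteq C$. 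Now Theorem~\ref{exinicsc}, applied to the integral extension $B_0\subseteq R/Q$ with the respective zero primes, yields an integrally closed maximal subring $V'$ of $R/Q$ with $(V':R/Q)=0$; moreover the distinguished unit occurring in that construction may be chosen in $U(B_0)=K^\times$, so its inverse (a nonzero constant) is excluded from $V'$ and hence $K\nsubseteq V'$. Pulling $V'$ back along $R\twoheadrightarrow R/Q$ produces a subring $S\supseteq Q$ of $R$ with $S/Q=V'$; by the standard correspondence $S$ is a maximal subring of $R$, integrally closed in $R$; reducing modulo $Q$ gives $(S:R)/Q=(S/Q:R/Q)=(V':R/Q)=0$, i.e. $(S:R)=Q$; and $K\nsubseteq S$ because $K\nsubseteq S/Q$. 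The case $Q=0$ is just hypothesis (1), so $(2)\Rightarrow(1)$ is immediate.

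The main obstacle is the bookkeeping in the last step: one must check that the subring manufactured by Theorem~\ref{exinicsc} still omits $K$ and has conductor \emph{exactly} $Q$, not some larger prime. Both points reduce to the elementary identity $U(K[Y_1,\ldots,Y_d])=K^\times$: it forces the distinguished unit in the proof of Theorem~\ref{exinicsc} to be a nonzero constant, whose inverse is then excluded from the constructed subring (giving $K\nsubseteq{}$), while the conductor is pinned down by $(S:R)\supseteq Q$ together with $(S:R)/Q=(S/Q:R/Q)=0$. Everything else — Noether Normalization, the subring/ideal correspondence along $R\to R/Q$, and the transport of integrally closed maximal subrings along integral extensions via Theorem~\ref{exinicsc} — is routine and already exploited in the excerpt.
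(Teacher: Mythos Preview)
Your proof is correct and follows essentially the same strategy as the paper: the paper also declares it suffices to prove $(1)\Rightarrow(2)$ and does so via the refined Noether Normalization plus Theorem~\ref{exinicsc}, leaving the remaining implications implicit. Your write-up is actually more careful than the paper's, both in noting the affine-domain restriction in (3) and in tracking why the subring produced by Theorem~\ref{exinicsc} still omits $K$ (via $U(K[Y_1,\ldots,Y_d])=K^\times$) and has conductor exactly $Q$.
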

\begin{proof}
It suffices to prove $(1)\Longrightarrow (2)$. Let $Q$ be a nonzero prime ideal of $R$, and $ht(Q)=m$. Hence $1\leq m\leq n$. If $m=n$, then $Q$ is a maximal ideal of $R$, therefore the field $R/Q$ contains a copy of $K$. Thus $R/Q$ is not absolutely algebraic field. Therefore by Corollary \ref{icmsnaf}, $R/Q$ has an integrally closed maximal subring. Hence $R$ has an integrally closed maximal subring $S$ which contains $Q$. Thus $Q\subseteq (S:R)$ and therefore $(S:R)=Q$, for $Q$ is a maximal ideal of $R$. Hence assume that $1\leq m\leq n-1$. Thus $d:=dim(\frac{R}{Q})=n-m\geq 1$. Therefore by Noether's Normalization Theorem (see \cite[Theorem A1 P.221 or Theorem 13.3]{eisnb}), there exist $Y_1,\ldots, Y_n$ in $R$ such that $R$ is integral over $T:=K[Y_1,\ldots,Y_n]$ and $P:=Q\cap T=(Y_{d+1},\ldots, Y_{n})$. Thus $B:=R/Q$ is integral over $A:=K[Y_1,\ldots, Y_d]$. Now by $(1)$, $A$ has an integrally closed maximal subring $S$ with zero conductor and $K\nsubseteq S$. Thus by Theorem \ref{exinicsc}, $B$ has an integrally closed maximal subring $V$ with zero conductor $K\nsubseteq V$. Hence $R$ has an integrally closed maximal subring with conductor $Q$, and we are done.
\end{proof}

Let $T$ be a ring, then we denote the set of all integrally closed maximal subrings of $T$ by $X^{i.c}(T)$ and also define $Spec(X^{i.c}(T)):=\{(S:T)\ |\ S\in X^{i.c}(T)\ \}$. Note that by $(3)$ of Theorem \ref{pret1}, $Spec(X^{i.c}(T))\subseteq Spec(T)$. Therefore $P\in Spec(X^{i.c}(R))$ if and only if $T$ has an integrally closed maximal subring $S$ with $(S:T)=P$. Now we have the following.

\begin{thm}
Let $R\subseteq T$ be an integral extension of rings. If $Max(T)\subseteq Spec(X^{i.c}(T))$, then $Max(R)\subseteq Spec(X^{i.c}(R))$.
\end{thm}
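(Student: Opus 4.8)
The plan is to fix $P\in Max(R)$, produce a prime of $T$ lying over it, pass to the quotient by the conductor of an integrally closed maximal subring of $T$ sitting above that prime, apply Lemma~\ref{cicmstf} in the resulting field extension, and finally lift the subring so obtained back along $R\to R/P$. So fix $P\in Max(R)$. Since $R\subseteq T$ is integral, lying-over gives $Q\in Spec(T)$ with $Q\cap R=P$, and $Q\in Max(T)$ because a prime of an integral extension lying over a maximal ideal is maximal. By hypothesis $Max(T)\subseteq Spec(X^{i.c}(T))$, so there is $V\in X^{i.c}(T)$ with $(V:T)=Q$.

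Now I would pass to quotients. As $Q=(V:T)$ is an ideal of $T$ contained in $V$, every subring between $V$ and $T$ contains $Q$; hence $V/Q$ is a maximal subring of $T/Q$, it is integrally closed in $T/Q$ (reduce an integral equation over $V$ modulo $Q\subseteq V$), and $(V/Q:T/Q)=0$. Since $Q\cap R=P$, we may regard the field $R/P$ as a subring of $T/Q$. The key point is that $R/P\nsubseteq V/Q$: otherwise, $Q\subseteq V$ would force $R\subseteq V$, hence $T$ would be integral over $V$, hence $V=T$ because $V$ is integrally closed in $T$, contradicting that $V$ is a proper subring of $T$. Applying Lemma~\ref{cicmstf} with the field $R/P$, the ambient ring $T/Q$, and the integrally closed maximal subring $V/Q$ then shows that $C:=(V/Q)\cap(R/P)$ is a one-dimensional valuation domain which is a maximal subring of $R/P$; in particular $C$, being a valuation domain with quotient field $R/P$, is integrally closed in $R/P$.

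It remains to lift $C$ back to $R$. Put $S:=\{r\in R\mid r+P\in C\}$, the preimage of $C$ under $R\to R/P$, so that $P\subseteq S\subsetneq R$ and $S/P=C$. Because $P$ is a common ideal of $S$ and $R$, the subrings between $S$ and $R$ correspond bijectively to the subrings between $C$ and $R/P$; thus maximality of $C$ in $R/P$ gives that $S$ is a maximal subring of $R$, and reducing modulo $P$ an integral equation of an element of $R$ over $S$ shows that $S$ is integrally closed in $R$ (using that $C$ is integrally closed in $R/P$). Finally $RP=P\subseteq S$ gives $P\subseteq(S:R)$, while $(S:R)\neq R$ since $S\neq R$; as $P$ is maximal this forces $(S:R)=P$. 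Hence $S\in X^{i.c}(R)$ with $(S:R)=P$, i.e.\ $P\in Spec(X^{i.c}(R))$, and since $P\in Max(R)$ was arbitrary, $Max(R)\subseteq Spec(X^{i.c}(R))$.

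The one genuinely delicate point I expect is the verification that $R/P\nsubseteq V/Q$, since this is precisely where the hypothesis that $V$ is \emph{integrally closed} in $T$ (rather than merely a maximal subring) enters; the remaining steps are the standard order-preserving correspondence of subrings and of conductors under passage to a quotient by a common ideal, together with the elementary observation that $P\subseteq(S:R)\subsetneq R$ with $P$ maximal already forces $(S:R)=P$.
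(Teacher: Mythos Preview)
Your proof is correct and follows essentially the same route as the paper's: lift $P$ to a maximal ideal $Q$ of $T$, take $V\in X^{i.c}(T)$ with $(V:T)=Q$, pass to the field extension $R/P\subseteq T/Q$, apply Lemma~\ref{cicmstf} to $V/Q$, and pull the resulting maximal subring of $R/P$ back to $R$. You in fact supply more detail than the paper does: the paper simply asserts ``Therefore $R/P\nsubseteq V/Q$'' without argument, whereas you justify it via $R\subseteq V\Rightarrow T$ integral over $V\Rightarrow V=T$, and you also spell out the lifting step and the computation $(S:R)=P$, both of which the paper leaves implicit.
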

\begin{proof}
Let $P$ be a maximal ideal of $R$, then there exists a maximal ideal $Q$ of $T$ such that $Q\cap R=P$. Therefore $R/P\subseteq T/Q$ is an integral extension of fields. Now by our assumption there exists an integrally closed maximal subring $V$ of $T$ with $(V:T)=Q$. Hence $V/Q$ is an integrally closed maximal subring of $T/Q$. Therefore $R/P\nsubseteq V/Q$. Thus by Lemma \ref{cicmstf}, $V/Q\cap R/P$ is an integrally closed maximal subring of $R/P$. Hence $R$ has an integrally closed maximal subring $W$ which contains $P$ and therefore $(W:R)=P$.
\end{proof}

\begin{prop}
Let $R$ be an integrally closed maximal subring of an integral domain $T$ with $U(T)\nsubseteq R$. If $R$ is noetherian, then $(R:T)=0$. In particular, $R_M$ is a DVR, where $M$ is the crucial maximal ideal of the extension $R\subseteq T$.
\end{prop}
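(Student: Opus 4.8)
The plan is to first extract a distinguished unit and reduce to a concrete extension $T=R[u]$. Since $R$ is an integrally closed maximal subring of $T$, every $x\in U(T)$ satisfies $x\in R$ or $x^{-1}\in R$ (the comment preceding Corollary \ref{corconch}); as $U(T)\nsubseteq R$ we may fix $u\in U(T)\setminus R$, so $u^{-1}\in R$. By maximality $T=R[u]$, so $R$ conches $u$ in $T$, and by Corollary \ref{corconch} the crucial maximal ideal of $R\subseteq T$ is $M=\sqrt{u^{-1}R}$. Note also $u=(u^{-1})^{-1}\in Q(R)$, whence $T=R[u]\subseteq Q(R)$ and $Q(T)=Q(R)=:K$.

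Next I would prove $(R:T)=0$ using the Noetherian hypothesis. Write $P=(R:T)$; it is an ideal of $T$ contained in $R$. Suppose $0\neq a\in P$. Then $u^{n}a\in P\subseteq R$ for all $n\geq 0$, since $P$ is a $T$-ideal and $u^{n}\in T$. Hence the set $\{u^{n}a:n\geq 0\}$ generates an ideal $J$ of $R$, which is finitely generated because $R$ is Noetherian; one may take the generators to be $u^{n_{1}}a,\dots,u^{n_{k}}a$ with $n_{1}<\dots<n_{k}$. Then $u^{n_{k}+1}a=\sum_{i=1}^{k}r_{i}u^{n_{i}}a$ for some $r_{i}\in R$, and cancelling $a\neq 0$ in the domain $T$ gives $u^{n_{k}+1}=\sum_{i=1}^{k}r_{i}u^{n_{i}}$. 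Thus $u$ is a root of the monic polynomial $X^{n_{k}+1}-\sum_{i}r_{i}X^{n_{i}}\in R[X]$, so $u$ is integral over $R$; since $R$ is integrally closed in $T$ and $u\in T$, this forces $u\in R$, a contradiction. (Equivalently: $aT$ is a nonzero ideal of $T$ which, being contained in $R$, is finitely generated as an $R$-module and faithful over $T$, so the determinant trick shows $T$ is integral over $R$, which is impossible since $R\subsetneq T$ is integrally closed.) Therefore $(R:T)=0$.

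For the last assertion I would localize at $M$. By Theorem \ref{pret1}(4), $R_{M}$ is a proper maximal subring of $T_{M}$ and is integrally closed in $T_{M}$, and by Theorem \ref{pret1}(5) applied to $R\subseteq T$ together with $(R:T)=0$, the ring $T_{M}$ (which is $(T/0)_{M/0}$ in the notation of Theorem \ref{pret1}(5)) is a one--dimensional valuation domain. A valuation domain is integrally closed in its quotient field $Q(T_{M})=Q(R_{M})=K$, so $R_{M}$, being integrally closed in $T_{M}$, is integrally closed in $K$; as $R_{M}$ is also Noetherian and local, it is a normal Noetherian local domain. It remains to show $\dim R_{M}=1$. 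If $\dim R_{M}\geq 2$, then the crucial maximal ideal $MR_{M}$ has height $\geq 2$, so every height-one prime $\mathfrak{p}$ of $R_{M}$ is different from $MR_{M}$; by Theorem \ref{pret1}(4), $(R_{M})_{\mathfrak{p}}=(T_{M})_{\mathfrak{p}}\supseteq T_{M}$. Intersecting over all height-one primes and using that a normal Noetherian domain is the intersection of its localizations at height-one primes yields $R_{M}=\bigcap_{\mathfrak{p}}(R_{M})_{\mathfrak{p}}\supseteq T_{M}$, contradicting $R_{M}\subsetneq T_{M}$. Hence $\dim R_{M}=1$; since $R_{M}$ is a domain properly contained in $T_{M}$ it is not a field, so $R_{M}$ is a one--dimensional normal Noetherian local domain, i.e., a DVR.

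The main obstacle is the step $(R:T)=0$: the conductor of the finitely generated algebra extension $R\subseteq R[u]$ need not localize to $0$ automatically, so one cannot simply pass to $R/P$ or $R_{M}$ and quote an earlier result; the crux is to exploit that each $u^{n}a$ already lies in $R$ and to combine the ascending chain condition with the hypothesis that $R$ is integrally closed in $T$ in order to manufacture an integral dependence relation for $u$. Once $(R:T)=0$ is known, the DVR conclusion is a soft consequence of parts (4) and (5) of Theorem \ref{pret1} and the standard structure theory of normal Noetherian local domains.
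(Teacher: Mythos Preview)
Your argument is correct and, for the key claim $(R:T)=0$, is essentially the paper's proof: the paper takes $t^{-1}\in U(T)\setminus R$, observes that $P=(R:T)$ is a nonzero finitely generated $R$-module (since $P\subseteq R$ and $R$ is Noetherian) with $t^{-1}P=P$ (because $P$ is a $T$-ideal and $t\in U(T)$), and applies the determinant trick to force $t^{-1}$ integral over $R$, a contradiction. This is exactly your parenthetical ``$aT$ is a finitely generated faithful $R$-module stable under $T$'' version, and your explicit ascending-chain computation with the generators $u^{n_i}a$ is just that same argument written out by hand.

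For the DVR statement the paper is much terser: it simply says the conclusion is evident from the Noetherian hypothesis together with Theorem~\ref{pret1}(5), using that with $P=(R:T)=0$ part~(5) produces a one-dimensional valuation ring at the crucial maximal ideal, which combined with Noetherianity gives a DVR immediately. Your longer route---showing $R_M$ is normal via integrality through $T_M$, and then ruling out $\dim R_M\geq 2$ by the intersection formula $R_M=\bigcap_{\mathrm{ht}\,\mathfrak p=1}(R_M)_{\mathfrak p}$ for normal Noetherian domains---is correct and self-contained, but more elaborate than what the paper intends.
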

\begin{proof}
Assume that $t^{-1}\in U(T)\setminus R$. Therefore $t\in R$ and $R[\frac{1}{t}]=T$. Now if $P=(R:T)\neq 0$, then $tP=P$ for $t\in U(T)$. Therefore $t^{-1}P=P$. This immediately implies that $t^{-1}$ is integral over $R$ which is absurd. The final part is evident by the fact that $R$ is noetherian and $(5)$ of Theorem \ref{pret1}.
\end{proof}

\begin{rem}
Note that one can prove the first part of the previous proposition by Krull Intersection Theorem and the fact that $(R:T)=\bigcap_{n=1}^\infty Rt^n$.
\end{rem}

\begin{cor}
Let $R$ be a noetherian one dimensional integral domain. Then for each $0\neq a\in R\setminus U(R)$, the overring $R_a=R[\frac{1}{a}]$ has a maximal subring with zero conductor. In particular, if $X$ is a multiplicatively closed set of $R$ which is not contained in $U(R)$, then $R_X$ has a maximal subring with zero conductor.
\end{cor}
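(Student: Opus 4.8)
The plan is to produce the required maximal subring by the ``affine extension'' device, then force its conductor to be zero, using Krull--Akizuki to obtain Noetherianity and then invoking the Proposition just above (on Noetherian integrally closed maximal subrings of domains).

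First I would work with $R_a=R[1/a]$. Since $a$ is a nonzero non-unit of $R$ we have $1/a\notin R$, so $R\subsetneq R_a$, and $R_a$ is generated over $R$ by the single element $1/a$; hence by the observation in the Introduction (an affine ring extension admits a maximal subring containing the base) $R_a$ has a maximal subring $S$ with $R\subseteq S\subsetneq R_a$. Now $1/a\notin S$, for otherwise $R_a=R[1/a]\subseteq S$; thus $S$ is a maximal subring of $R_a$ containing the unit $a$ of $R_a$ but not its inverse $a^{-1}=1/a$, and by the comment preceding Corollary~\ref{corconch} this forces $S$ to be integrally closed in $R_a$. Moreover $U(R_a)\nsubseteq S$, since $1/a\in U(R_a)\setminus S$.

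The next step is Noetherianity. We have $R\subseteq S\subseteq R_a\subseteq K$, where $K=Q(R)=Q(R_a)$, so $S$ is a ring caught between the one-dimensional Noetherian domain $R$ and its quotient field; by the Krull--Akizuki theorem (\cite[Theorem 93]{kap}), $S$ is Noetherian (in fact one-dimensional). Therefore $S$ is a Noetherian integrally closed maximal subring of the integral domain $R_a$ with $U(R_a)\nsubseteq S$, and the preceding Proposition gives $(S:R_a)=0$, which settles the first assertion. For the final assertion I would choose $a\in X$ with $a\neq 0$ and $a\notin U(R)$ (possible since $X\nsubseteq U(R)$); then $R\subsetneq R_X$, $a\in U(R_X)$, and $R_X$ is again a one-dimensional Noetherian domain (a localization of $R$) with $Q(R_X)=K$, lying between $R$ and $K$. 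Then repeat the argument with $R_X$ in place of $R_a$: by Zorn's Lemma there is a subring $S'$ of $R_X$ maximal with respect to $R\subseteq S'$ and $1/a\notin S'$ (the union of a chain of such subrings still omits $1/a$, hence is proper); once one checks that $S'$ is actually a maximal subring of $R_X$ (equivalently $S'[1/a]=R_X$), it is integrally closed in $R_X$ by the comment preceding Corollary~\ref{corconch}, Noetherian by Krull--Akizuki since $R\subseteq S'\subseteq R_X\subseteq K$, and $U(R_X)\nsubseteq S'$, so the same Proposition yields $(S':R_X)=0$.

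I expect the main obstacle to be precisely the Noetherianity of the intermediate maximal subring: this is where the hypothesis $\dim R=1$ is genuinely used (via Krull--Akizuki), and for a higher-dimensional Noetherian $R$ the ring $S$ sandwiched between $R$ and $K$ need not be Noetherian, so the conductor argument collapses. A secondary point, in the ``in particular'' clause, is verifying that the Zorn-maximal subring $S'$ is really a \emph{maximal} subring of $R_X$ with $S'[1/a]=R_X$; this is immediate when $R_X=R_a$ for a single $a$ (in particular when $X$ is finitely generated as a multiplicative set, where the first case applies verbatim), and in general requires a short additional verification from the maximality of $S'$ among subrings of $R_X$ containing $R$ and omitting $1/a$.
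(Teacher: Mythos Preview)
Your treatment of the first assertion is the paper's argument: take a maximal subring $S$ of $R_a$ with $R\subseteq S$ and $1/a\notin S$ (using that $R_a=R[1/a]$ is affine over $R$), then apply Krull--Akizuki to make $S$ Noetherian. The only difference is cosmetic: the paper finishes by writing $(S:R_a)=\bigcap_{n\geq 1} Sa^n$ and invoking the Krull Intersection Theorem directly, whereas you detour through the preceding Proposition. The Remark sitting between that Proposition and this Corollary already records that these two arguments are the same computation, so nothing is gained or lost.

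For the ``in particular'' clause, the difficulty you flag is real and is not the short verification you hope for. Maximality of $S'$ among subrings of $R_X$ containing $R$ and omitting $1/a$ only tells you that any strictly larger subring contains $1/a$; it does \emph{not} tell you that $S'[1/a]=R_X$. All you get for free is $S'[1/a]\supseteq R[1/a]=R_a$, and for a general multiplicatively closed set $X$ there is no evident reason the remaining inverses $1/x$ (for $x\in X$) fall into $S'[1/a]$. The paper does not attempt this Zorn argument at all: it simply invokes Corollary~\ref{cumcasg}, which hands you a proper subring $S$ of $R_X$ and an element $a\in X$ with $R_X=S_a$, i.e.\ exactly the conclusion you were unable to extract. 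After that, the proof is literally ``similar'' to the first part (maximal subring over $S$ by affineness, Krull--Akizuki, Krull Intersection). So the correct repair of your sketch is to replace the bare Zorn step by an appeal to Corollary~\ref{cumcasg}.
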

\begin{proof}
Let $T=R[\frac{1}{a}]$, then clearly $T$ has a maximal subring $S$ which contains $R$ and $\frac{1}{a}\notin S$. Therefore $S_a=T$. Now note that by Krull-Akizuki Theorem (\cite[Theorem 93]{kap}), $S$ is noetherian and therefore by Krull Intersection Theorem we conclude that $(S:T)=\bigcap_{n=1}^\infty Sa^n=0$. The final part is similar and follows
from Corollary \ref{cumcasg}.
\end{proof}

\begin{prop}\label{nnidicmszcdvr}
Let $R$ be a normal noetherian integral domain which is an integrally closed maximal subring of a ring $T$ with crucial maximal ideal $M$. Then $(R:T)=0$ and $R_M$ is a DVR.
\end{prop}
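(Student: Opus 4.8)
The plan is to localize at the crucial maximal ideal and then exploit that a normal noetherian local domain is a Krull domain. Since $R$ is an integral domain which is an integrally closed maximal subring of $T$, the ring $T$ is an integral domain by \cite[Theorem 10]{modica}. Put $P:=(R:T)$ and let $M$ be the crucial maximal ideal of $R\subseteq T$. By Theorem \ref{pret1}, $P\in Spec(R)\cap Spec(T)$ with $P\subseteq M$, and $R_M\subseteq T_M$ (localization at $R\setminus M$) is again an integrally closed minimal ring extension; moreover $R_M$ is a normal noetherian local domain and $T_M$ is an integral domain. Fix $t\in T_M\setminus R_M$, so that $T_M=R_M[t]$.

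First I would show $T_M$ is an overring of $R_M$, i.e.\ $T_M\subseteq K:=Q(R_M)$. If $t$ were transcendental over $R_M$, then $R_M\subsetneq R_M[t^2]\subsetneq R_M[t]=T_M$, contradicting minimality; hence $t$ is algebraic over $R_M$, and clearing denominators in an algebraic relation yields $0\neq a\in R_M$ with $at$ integral over $R_M$, so $at\in R_M$ (as $R_M$ is integrally closed in $T_M$) and $t=at/a\in K$. Next, writing $t=c/d$ with $c,d\in R_M$, note that the conductor $I:=(R_M:T_M)$ is an ideal of $T_M$ as well, so $(c/d)I\subseteq I$; if $I\neq 0$, then $I$ is a nonzero (hence faithful) finitely generated ideal of the noetherian ring $R_M$, and the determinant trick makes $c/d$ integral over $R_M$, forcing $c/d\in R_M$, a contradiction. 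Thus $(R_M:T_M)=0$, and since $(R:T)R_M\subseteq (R_M:T_M)=0$ with $R$ a domain, we obtain $(R:T)=0$. (This also recovers the preceding proposition without its hypothesis $U(T)\nsubseteq R$.)

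It remains to prove $R_M$ is a DVR. Note $R_M\neq K$, since $R_M\subsetneq T_M\subseteq K$. If $T_M=K$, then $R_M$ is a maximal subring of its own quotient field, hence a one-dimensional valuation domain, hence a DVR since it is noetherian. So assume $T_M\subsetneq K$; then $T_M$ is a one-dimensional valuation ring of $K$ by Theorem \ref{pret1}(5) (applied with $P=0$). Suppose, for a contradiction, that $\dim R_M\geq 2$. Being normal and noetherian, $R_M$ is a Krull domain, so $R_M=\bigcap_{\mathfrak{p}}(R_M)_{\mathfrak{p}}$ as $\mathfrak{p}$ ranges over the height-one primes of $R_M$, each $(R_M)_{\mathfrak{p}}$ a DVR of $K$; since $t\notin R_M$, there is such a $\mathfrak{p}_0$ with $t\notin (R_M)_{\mathfrak{p}_0}$. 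Then $(R_M)_{\mathfrak{p}_0}\not\subseteq T_M$: otherwise $R_M\subsetneq (R_M)_{\mathfrak{p}_0}\subseteq T_M$, the first inclusion being proper because $\mathfrak{p}_0$ is not the maximal ideal of $R_M$, and minimality would force $(R_M)_{\mathfrak{p}_0}=T_M\ni t$, which is absurd. Hence $T_M$ and $(R_M)_{\mathfrak{p}_0}$ are incomparable valuation rings of $K$, so $B:=T_M\cap (R_M)_{\mathfrak{p}_0}$ is a B\'ezout (in particular Pr\"ufer) domain with $R_M\subseteq B\subsetneq T_M$, the last inclusion proper since $t\notin B$. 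If $R_M=B$, then $R_M$ is a noetherian Pr\"ufer domain, hence Dedekind, hence of dimension at most $1$, contradicting $\dim R_M\geq 2$. Therefore $R_M\subsetneq B\subsetneq T_M$, contradicting the minimality of $R_M\subseteq T_M$. Thus $\dim R_M=1$, and a one-dimensional normal noetherian local domain is a DVR.

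I expect the last step to be the crux: the conductor-zero part mirrors the preceding proposition and is essentially a localization argument, whereas the DVR conclusion genuinely uses normality, through the Krull decomposition $R_M=\bigcap_{\mathfrak{p}}(R_M)_{\mathfrak{p}}$, which is precisely what allows an intermediate ring to be manufactured between $R_M$ and $T_M$ once $\dim R_M\geq 2$. In writing this up one should verify the standard facts invoked --- that an intersection of two incomparable valuation rings of a field is a Pr\"ufer domain, and that a noetherian Pr\"ufer domain is Dedekind of dimension at most $1$ --- and confirm that Theorem \ref{pret1}(5) indeed yields a one-dimensional valuation ring of $K$ in the remaining case $T_M\subsetneq K$.
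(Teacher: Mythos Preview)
Your argument is correct, but it takes a substantially longer route than the paper's.

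For the vanishing of the conductor, the paper simply observes that $T$ is an overring of $R$ (via \cite[Theorem 10]{modica}) and then invokes \cite[Theorem 2.4 and Lemma 2.8]{aych}, which for an integrally closed minimal overring gives $(R:T)=\bigcap_{n\geq 1}A^n$ for some ideal $A$ of $R$; the Krull Intersection Theorem then forces $(R:T)=0$. Your determinant-trick argument after localizing at $M$ achieves the same conclusion without appealing to Ayache's structural result, and is a perfectly good self-contained substitute.

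For the DVR conclusion, the paper uses $(5)$ of Theorem \ref{pret1} \emph{directly}: once $(R:T)=0$, that item (read as the paper intends it, namely that the \emph{smaller} ring modulo the conductor, localized at the crucial maximal, is a one-dimensional valuation domain) already says $R_M$ is a valuation domain, and noetherianity finishes. You instead read $(5)$ as asserting that $T_M$ is a one-dimensional valuation domain and then run a genuine extra argument via the Krull decomposition $R_M=\bigcap_{\mathfrak p}(R_M)_{\mathfrak p}$ to manufacture an intermediate Pr\"ufer ring and derive a contradiction when $\dim R_M\geq 2$. This works (the fact that a finite intersection of pairwise incomparable valuation rings of $K$ with the same quotient field is a semilocal Pr\"ufer, hence B\'ezout, domain is standard; see e.g.\ \cite[Theorem 22.8 and surrounding material]{gilbok}), but it is unnecessary once one uses the intended content of $(5)$. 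In short: your conductor argument is a nice alternative to the Ayache citation, while your DVR argument, though valid, reproves from scratch what $(5)$ is meant to hand you.
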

\begin{proof}
First note that by \cite[Theorem 10]{modica}, $T$ is an integral domain and therefore $T$ is a minimal overrring of $R$. Hence by \cite[Theorem 2.4 and Lemma 2.8]{aych}, there exists an ideal $A$ of $R$ such that $(R:T)=\bigcap_{n=1}^\infty A^n$. Thus by Krull intersection theorem we infer that $(R:T)=0$ and therefore by $(5)$ of Theorem \ref{pret1}, $R_M$ is a valuation domain. Since $R$ is noetherian, we conclude that $R_M$ is a DVR.
\end{proof}

\begin{prop}
Let $R$ be a completely integrally closed integral domain which is a conch maximal subring of a ring $T$. Then $(R:T)=0$.
\end{prop}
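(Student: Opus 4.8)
The plan is to use the fact that a conch \emph{maximal} subring $R$ of $T$ forces $T = R[x]$ for the conched unit $x$, together with the elementary description of the conductor of such an extension, and then to read off from a hypothetical nonzero conductor element that $x$ is almost integral over $R$ — which complete integral closedness forbids, since $x\notin R$.

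First I would fix the data. Since $R$ conches a unit $x\in U(T)$, we have $x^{-1}\in R$, $x\notin R$, and, $R$ being a maximal subring, $T=R[x]$. Put $t:=x^{-1}\in R$, so $x=1/t$ and $T=R[1/t]=\bigcup_{n\ge 0}t^{-n}R$, the union being increasing because $t\in R$ gives $R\subseteq t^{-1}R$. By \cite[Theorem 10]{modica} (used already in the proof of Proposition \ref{nnidicmszcdvr}) $T$ is an integral domain, so $Q(R)\subseteq Q(T)$, and since $t\in R$ the element $x=1/t$ actually lies in $Q(R)$; thus $T$ is an overring of $R$. As recorded in the Remark following Proposition \ref{nnidicmszcdvr}, $(R:T)=\bigcap_{n=1}^{\infty}Rt^{n}$, and this is immediate from $T=\bigcup_{n}t^{-n}R$: one has $y\in(R:T)$ iff $t^{-n}y\in R$ for all $n$ iff $y\in t^{n}R$ for all $n$ (note $(R:T)\subseteq R$ since $1\in T$).

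Now suppose for contradiction that $(R:T)\neq 0$ and choose $0\neq y\in(R:T)$. For every $n\ge 0$ write $y=t^{n}r_{n}$ with $r_{n}\in R$; then $y\,x^{n}=(tx)^{n}r_{n}=r_{n}\in R$. Hence there is a nonzero element $y\in R$ with $y\,x^{n}\in R$ for all $n\ge 0$, i.e. $x\in Q(R)$ is almost integral over $R$. Since $R$ is completely integrally closed, this forces $x\in R$, contradicting $x\notin R$. Therefore $(R:T)=0$. I do not anticipate a real obstacle: the only points requiring care are the appeal to Modica's theorem to know $T$ is a domain (so that $x$ genuinely sits in $Q(R)$ and "almost integral over $R$" is meaningful), and the bookkeeping identity $(R:T)=\bigcap_{n}t^{n}R$, both of which are routine here.
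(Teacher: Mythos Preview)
Your proof is correct and follows essentially the same route as the paper: both reduce to $(R:T)=\bigcap_{n\ge 1}Rt^{n}$ with $t=x^{-1}$ and then use complete integral closedness to force this intersection to vanish. The only difference is cosmetic---the paper cites \cite[Corollary 13.4]{gilbok} as a black box for $\bigcap_{n}Rt^{n}=0$, whereas you unwrap that statement by exhibiting a nonzero $y\in\bigcap_{n}Rt^{n}$ as a witness that $x$ is almost integral over $R$; your appeal to \cite[Theorem 10]{modica} to ensure $T$ is a domain (so that $x$ really lives in $Q(R)$) is a harmless clarification that the paper leaves implicit.
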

\begin{proof}
First note that there exists $a\in R$ such that $T=R[\frac{1}{a}]$, for $R$ is a conch maximal subring. Now one can easily see that $(R:T)=\bigcap_{n=1}^\infty Ra^n$. Thus by \cite[Corollary 13.4]{gilbok}, we infer that $(R:T)=0$ for $R$ is completely integrally closed.
\end{proof}

\begin{lem}\label{lmpmzcat}
Let $R$ be an integral domain with $0\neq (p)\in Spec(R)$. Then the following hold.
\begin{enumerate}
\item $R$ is a maximal subring of $R[\frac{1}{p}]$ with zero conductor if and only if $\bigcap_{n=1}^\infty (p^n)=0$ and $(p)\in Max(R)$.
\item if $R$ is atomic, then $(R:R[\frac{1}{p}])=0$. Moreover, $R$ is a maximal subring of $R[\frac{1}{p}]$ if and only if $(p)\in Max(R)$.
\end{enumerate}
\end{lem}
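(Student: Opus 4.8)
The plan is to first identify the conductor explicitly and then handle the ``maximal subring'' condition through the crucial maximal ideal of the extension. Write $T=R[\frac{1}{p}]=\bigcup_{n\ge 0}\frac{1}{p^n}R$. Since the $\frac{1}{p^n}$ generate $T$ as an $R$-module, for $x=\frac{a}{p^m}\in T$ one has $xT\subseteq R$ exactly when $\frac{a}{p^{m+n}}\in R$ for all $n\ge 0$, i.e.\ when $a\in\bigcap_{k\ge 1}(p^k)$; a short check then gives
\[(R:T)=\bigcap_{n=1}^{\infty}(p^n)\subseteq R.\]
In particular $(R:T)=0$ if and only if $\bigcap_{n=1}^{\infty}(p^n)=0$, which settles the ``zero conductor'' clause in both parts. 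Observe also that since $(p)$ is a proper prime ideal we have $p\notin U(R)$, hence $\frac{1}{p}\notin R$ and $R\subsetneq T$, so asking whether $R$ is a maximal subring of $T$ is meaningful.

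Next I would show that $R$ is a maximal subring of $T$ if and only if $(p)\in Max(R)$. For the forward direction, let $M$ be the crucial maximal ideal of $R\subseteq T$ provided by Theorem \ref{pret1}. For every prime $P\ne M$ of $R$ one has $R_P=T_P=R_P[\frac{1}{p}]$, so $\frac{1}{p}\in R_P$ and thus $p\notin P$; applying this to the prime $P=(p)$ forces $(p)=M\in Max(R)$. For the converse, suppose $(p)\in Max(R)$ and take $x\in T\setminus R$. Multiplying $x$ by a suitable power of $p$ (which keeps us inside $R[x]$) we may assume $x=\frac{a}{p}$ with $a\notin(p)$; since $R/(p)$ is a field there exist $b,c\in R$ with $ab=1+pc$, and then $b\cdot\frac{a}{p}=\frac{1}{p}+c$ shows $\frac{1}{p}\in R[x]$, hence $R[x]=T$. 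This proves the equivalence, and combined with the conductor formula it yields part (1).

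For part (2), by the displayed formula it suffices to prove that an atomic $R$ satisfies $\bigcap_{n=1}^{\infty}(p^n)=0$; this is precisely the argument used in Corollary \ref{icmsipat}. If $J:=\bigcap_{n=1}^{\infty}(p^n)\ne 0$, then by \cite[Ex.5, Sec.1-1]{kap} $J$ is a prime ideal strictly contained in $(p)$; choosing a nonzero $a\in J$ (necessarily a nonunit) and factoring it into irreducibles, primeness of $J$ produces an irreducible $q\in J\subseteq(p)$, so $p\mid q$, forcing $q$ to be an associate of $p$ and hence $(p)=(q)\subseteq J\subsetneq(p)$, a contradiction. Therefore $(R:T)=\bigcap_{n=1}^{\infty}(p^n)=0$, and the ``moreover'' assertion follows at once from part (1): the condition $\bigcap(p^n)=0$ is now automatic, so $R$ is a maximal subring of $T$ precisely when $(p)\in Max(R)$. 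The only step that is not bookkeeping is the forward direction of the equivalence in part (1), where it seems cleanest to use the structure of minimal ring extensions rather than to manipulate polynomials by hand.
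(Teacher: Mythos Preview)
Your proof is correct and follows essentially the same approach as the paper: both establish $(R:T)=\bigcap_{n\ge 1}(p^n)$ first, use the crucial maximal ideal to show $R$ maximal implies $(p)=M\in Max(R)$ (the paper phrases this via Corollary~\ref{corconch}, which is the same content as your direct appeal to Theorem~\ref{pret1}(4)), and argue the converse by clearing denominators and using that $R/(p)$ is a field. Your organization is slightly cleaner in that you isolate the unconditional equivalence ``$R$ is a maximal subring of $T$ $\Leftrightarrow$ $(p)\in Max(R)$'' and observe that the zero-conductor hypothesis is not needed for either implication---the paper invokes $\bigcap(p^n)=0$ in the converse to write $x=r/p^n$ with $r\notin(p)$, but as you implicitly note, $x\notin R$ already forces this.
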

\begin{proof}
Assume that $T=R[\frac{1}{p}]$, then one can easily see that $(R:T)=\bigcap_{n=1}^\infty Rp^n$. Hence $(R:T)=0$ if and only if $\bigcap_{n=1}^\infty Rp^n=0$. For $(1)$, first suppose that $\bigcap_{n=1}^\infty Rp^n=0$ and $(p)\in Max(R)$, then we prove $R$ is a maximal subring of $T$. Let $A$ be a subring of $T$ which properly contains $R$. Let $x\in A\setminus R$, thus we may assume that $x=\frac{r}{p^n}$, where $r\in R$, $n\geq 1$ and $a\notin Rp$, for $\bigcap_{n=1}^\infty Rp^n=0$ and $x\in A\subseteq T$ but $x\notin R$. Hence
$\frac{r}{p}\in A$. Since $M=(p)$ is a maximal ideal of $R$, we conclude that $ap+br=1$, for some $a,b\in R$. Therefore $\frac{1}{p}=a+b\frac{r}{p}\in A$, and hence $T\subseteq A$. Thus $R$ is a maximal subring of $T$. Conversely, assume that $R$ is a maximal subring of $T$. Clearly $R$ conches $\frac{1}{p}$ in $T$. Therefore if $M$ is the crucial maximal ideal of the extension of $R\subseteq T$, then $M$ is the unique prime ideal of $R$ which contains $p$, by Corollary \ref{corconch}. Therefore $M=(p)$.  For $(2)$ note that by the proof of Corollary \ref{icmsipat}, $\bigcap_{n=1}^\infty Rp^n=0$ and therefore we are done by part $(1)$.
\end{proof}

\begin{prop}
Let $R\subseteq T$ be an integral extension of integral domain with $p\in R$ is prime element in $T$ and $pR=R\cap pT$. If $R$ is a maximal subring of $R[\frac{1}{p}]$ with zero conductor, then $T$ is a maximal subring of $T[\frac{1}{p}]$ with zero conductor.
\end{prop}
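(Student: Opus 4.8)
The plan is to push everything through Lemma \ref{lmpmzcat}(1). Applied to the domain $R$ (note that $pR=R\cap pT$ is prime, being the contraction of the prime ideal $pT$, and it is nonzero since $p\neq 0$), that lemma turns the hypothesis ``$R$ is a maximal subring of $R[\frac{1}{p}]$ with zero conductor'' into the two facts $\bigcap_{n=1}^{\infty}p^{n}R=0$ and $pR\in Max(R)$. Applied to $T$ (note $0\neq(p)\in Spec(T)$ because $p$ is a prime element of $T$), the same lemma will give the desired conclusion as soon as I prove the analogues $pT\in Max(T)$ and $\bigcap_{n=1}^{\infty}p^{n}T=0$. So the whole proof consists of verifying these two statements for $T$.

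The first one is routine. Since $pT\cap R=pR\in Max(R)$, the residue extension $R/pR\hookrightarrow T/pT$ is an integral extension in which the base is a field and the top ring is a domain (as $pT$ is prime); a domain that is integral over a field is a field, so $T/pT$ is a field, i.e. $pT\in Max(T)$.

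The substance is the equality $\bigcap_{n=1}^{\infty}p^{n}T=0$. I would first upgrade the compatibility hypothesis $pR=R\cap pT$ to $p^{n}R=R\cap p^{n}T$ for all $n\geq 1$, by induction: if $r\in R\cap p^{n+1}T$, then $r\in pT$, so $r=ps$ with $s\in R$; cancelling $p$ (legitimate because $T$ is a domain and $p\neq 0$) gives $s\in p^{n}T\cap R=p^{n}R$, whence $r\in p^{n+1}R$. Next, suppose for contradiction that $0\neq x\in\bigcap_{n}p^{n}T$. Since $T$ is integral over $R$, choose a monic $f(X)=X^{m}+a_{m-1}X^{m-1}+\cdots+a_{0}\in R[X]$ of least degree with $f(x)=0$; because $T$ is a domain and $x\neq 0$, minimality of $m$ forces $a_{0}\neq 0$. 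But $a_{0}=-x\,(x^{m-1}+a_{m-1}x^{m-2}+\cdots+a_{1})\in xT\subseteq p^{n}T$ for every $n$, and $a_{0}\in R$, so $a_{0}\in\bigcap_{n}(R\cap p^{n}T)=\bigcap_{n}p^{n}R=0$, a contradiction.

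I expect the main obstacle to be exactly this last point: neither the integrality of $R\subseteq T$ nor the condition $pR=R\cap pT$ suffices by itself, and one must combine them — the idea being that the constant term of an integral equation for a putative nonzero element of $\bigcap_{n}p^{n}T$ lives in $R$ and is simultaneously divisible by every power of $p$ in $T$, hence (after the induction step) by every power of $p$ in $R$, forcing it to vanish. Everything else is bookkeeping, and a final appeal to Lemma \ref{lmpmzcat}(1) applied to $T$ closes the argument.
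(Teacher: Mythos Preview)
Your proof is correct. Both you and the paper reduce the statement to Lemma~\ref{lmpmzcat}(1), so both need $\bigcap_n p^nT=0$ and $pT\in Max(T)$; you handle the latter explicitly via the integral residue extension $R/pR\hookrightarrow T/pT$, while the paper leaves it to the reader.

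The interesting divergence is in the proof that $\bigcap_n p^nT=0$. You first bootstrap the compatibility hypothesis to $p^nR=R\cap p^nT$ for all $n$, then run a minimal-polynomial argument on a putative $0\neq x\in\bigcap_n p^nT$ to force its constant term into $\bigcap_n p^nR=0$. The paper instead picks $0\neq x\in(\bigcap_n p^nT)\cap R$ (using that a nonzero ideal in an integral extension of domains has nonzero contraction), writes $x=p^nt_n$ with $t_n\in T$, observes $t_n=x/p^n\in R[1/p]$ is integral over $R$, and uses that $R$ is integrally closed in $R[1/p]$ (it conches $1/p$) to conclude $t_n\in R$, hence $x\in\bigcap_n p^nR=0$. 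So the paper leverages the integral-closedness of the conch maximal subring $R\subseteq R[1/p]$ and never needs your inductive upgrade of $pR=R\cap pT$; conversely, your route makes the role of that hypothesis transparent and avoids invoking the conch/integral-closure machinery. Each argument is short, and neither is strictly more elementary than the other---they simply lean on different halves of the available structure.
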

\begin{proof}
Since $R$ is a maximal subring of $R_1:=R[\frac{1}{p}]$ with zero conductor, we conclude that $0=(R:R_1)=\bigcap_{n=1}^\infty Rp^n$. Let $T_1=T[\frac{1}{p}]$, we claim that
$Q:=(T:T_1)=\bigcap_{n=1}^\infty Tp^n=0$. If $Q\neq 0$, then $Q\cap R\neq 0$ for $T$ is an integral domain which is integral over $R$. Let $0\neq x\in Q\cap R$, then for each $n\geq 1$, there exists $t_n\in T$ such that $x=p^n t_n$. Thus $\frac{x}{p^n}=t_n\in R[\frac{1}{p}]$ is integral over $R$. Since $R$ is integrally closed in $R_1$ we deduce that $t_n\in R$ and therefore $x\in\bigcap_{n=1}^\infty Rp^n=0$, which is absurd. Thus $Q=0$ and hence we are done by Lemma \ref{lmpmzcat}.
\end{proof}

\begin{rem}
Theorem \ref{icwtphtn} raises a natural question for the dual concept of maximal subrings, i.e., minimal ring extensions, as follows. Let $K$ be a field and $R=K[X_1,\ldots,X_n]$, where $X_1,\ldots,X_n$ are independent indeterminate over $K$. Assume that $P\in Spec(R)$ is arbitrary prime ideal. Now the natural question arises: Does there exist a minimal ring extension $T$ of $R$ with $(R:T)=P$? If $P\in Max(R)$ (for arbitrary ring $R$), then the answer to this question is positive by \cite[Corollary 2.5]{dobs}, and in this case note that by $(2)$ of Theorem \ref{pret1}, $T$ is integral over $R$. In fact by \cite[Corollary 2.5]{dobs}, for each maximal ideal $M$ of $R$, the idealization $T:=R(+)\frac{R}{M}$ is a minimal ring extension of $R$ with $(R:T)=M$. Now suppose $P$ is a prime ideal of $R=K[X_1,\ldots,X_n]$ which is not a maximal ideal of $R$. First note that by Lemma \ref{mscpnm}, if there exists a minimal ring extension $T$ of $R$ with $(R:T)=P$, then $R$ is integrally closed in $T$ and $ht(P)=n-1$, by $(5)$ of Theorem \ref{pret1}. Also by \cite[Theorem 10]{modica}, $T$ is an integral domain and therefore $T$ is an overring of $R$. Now we have two cases. If $n=1$, then $P=0$ and one can easily see that for each irreducible polynomial $p(X_1)\in R=K[X_1]$, the overring $T:=R[\frac{1}{p(X_1)}]$ is a minimal ring extension of $R$ with $(R:T)=0$. But if $n\geq 2$, then by \cite[Proposition 6.1]{aych}, $R=K[X_1,\ldots,X_n]$ has no minimal overring and therefore $R$ has no minimal ring extension with conductor $P$.
\end{rem}

In \cite{sato}, the authors proved that if $R$ is a one dimensional noetherian domain, then $R$ has a minimal overring. In particular, if $R$ is affine integral domain over an infinite field $K$ then $R$ has a minimal overring if and only if either $dim(R)=1$ or $dim(R)\geq 2$ and $R$ has a maximal ideal $M$ of depth 1. We conclude this article by the following corollary.

\begin{cor}
Let $R$ be an affine normal integral domain over a field $K$. Suppose $R$ is not a field. Then $R$ has a minimal overring if and only if $tr.deg(R/K)=1$.
\end{cor}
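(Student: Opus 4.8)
The plan is to use throughout that $tr.deg(R/K)=dim(R)$ for an affine domain $R$ over $K$, and to prove the two implications separately. The direction $tr.deg(R/K)=1\Rightarrow R$ has a minimal overring is immediate: $R$ is noetherian (being affine over a field) and $tr.deg(R/K)=1$ means $dim(R)=1$, so $R$ is a one-dimensional noetherian domain and the theorem of \cite{sato} recalled just above supplies a minimal overring of $R$. Normality is not used in this direction.

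For the converse, suppose $R$ has a minimal overring $T$, so that $R\subsetneq T\subseteq Q(R)$ with no ring strictly between; then $R$ is a maximal subring of $T$, and since $R$ is normal it is integrally closed in $Q(R)$ and hence in $T$. Thus $R$ is a normal noetherian integral domain which is an integrally closed maximal subring of $T$, and Proposition \ref{nnidicmszcdvr} applies: if $M$ denotes the crucial maximal ideal of the extension $R\subseteq T$, then $(R:T)=0$ and $R_M$ is a DVR. By part $(4)$ of Theorem \ref{pret1}, $M$ is a maximal ideal of $R$, and as the maximal ideal of the DVR $R_M$ it satisfies $ht(M)=dim(R_M)=1$. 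Finally I would invoke the dimension formula for affine domains over a field, $ht(M)+dim(R/M)=dim(R)$; since $M$ is maximal, $dim(R/M)=0$, so $dim(R)=ht(M)=1$, i.e. $tr.deg(R/K)=1$, as required.

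The only substantive ingredient is Proposition \ref{nnidicmszcdvr} (which itself rests on \cite{aych} and the Krull intersection theorem) — it is what forces the crucial localization to be a DVR, and in particular pins down $ht(M)=1$, rather than leaving us with a one-dimensional valuation ring $(R/P)_{M/P}$ sitting over a nontrivial conductor $P=(R:T)$. The point to be careful about is that one must use that the crucial maximal ideal $M$ is a maximal ideal of $R$ (Theorem \ref{pret1}$(4)$), so that $dim(R/M)=0$ and the dimension formula pins $dim(R)$ down exactly; without this one would only get a constraint on $ht(M)$ and not the full conclusion $dim(R)=1$.
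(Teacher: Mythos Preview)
Your proof is correct and follows essentially the same route as the paper's own argument: both directions invoke the same ingredients (the Sato result for the forward direction, Proposition \ref{nnidicmszcdvr} followed by the dimension formula for affine domains over a field for the converse). Your write-up is simply more explicit about why $R_M$ being a DVR forces $ht(M)=1$ and about invoking $ht(M)+\dim(R/M)=\dim(R)$, whereas the paper compresses this into the single line ``$R$ has a height one maximal ideal, therefore $\dim(R)=1$.''
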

\begin{proof}
First note that $dim(R)=tr.deg(R/K)$. Hence if $tr.deg(R/K)=1$, then we are done by \cite[Theorem 3]{sato} (even if $R$ is not normal). Conversely, assume that $R$ has a minimal overring. Then by Proposition \ref{nnidicmszcdvr}, $R$ has a height one maximal ideal. Therefore $dim(R)=1$ and hence we are done.
\end{proof}


\end{document}